\newcommand{\norm}[1]{\left\|#1\right\|}
\newcommand{\abs}[1]{\left\vert#1\right\vert}
\renewcommand{\tilde}{\widetilde}
\def\eps{{\varepsilon}}
\newcommand{\les}{\lesssim}
\def\RR {\mathbb{R}}
\def\HH {\mathbb{H}}
\def\Bcal {{\mathcal B}}
\def\Ncal {{\mathcal N}}
\def\Lcal {{\mathcal L}}
\def\Mcal {{\mathcal M}}
\def\OO {{\mathcal O}}
\def\p{{\partial}}
\newcommand{\brak}[1]{{\left\langle #1 \right\rangle}} 
\newcommand{\ud}{\,\mathrm{d}}
\newcommand{\n}{\ensuremath{\nonumber}}
\newtheorem{theorem}{Theorem}[section]
\theoremstyle{lemma}
\theoremstyle{definition}
\newtheorem{remark}[theorem]{Remark}
\theoremstyle{lemma}
\newtheorem{lemma}[theorem]{Lemma}
\numberwithin{equation}{section}
\title{\bf Real analytic local well-posedness for the Triple Deck}
\author{
{\bf Sameer Iyer} 
\thanks{Department of Mathematics, Princeton University.
{\footnotesize \href{mailto:ssiyer@math.princeton.edu}{ssiyer@math.princeton.edu}.}
}
\and
{\bf Vlad Vicol}
\thanks{Courant Institute for Mathematical Sciences,  New York University.
{\footnotesize \href{mailto:vicol@cims.nyu.edu}{vicol@cims.nyu.edu}.}}
}
\date{}
\begin{document}
\maketitle

\begin{abstract} The Triple Deck model is a classical high order boundary layer model that has been proposed to describe flow regimes where the Prandtl theory is expected to fail. At first sight the model appears to lose two derivatives through the pressure-displacement relation which links pressure to the tangential slip. In order to overcome this, we split the Triple Deck system into two coupled equations: a Prandtl type system on $\mathbb{H}$ and a Benjamin-Ono type equation on $\mathbb{R}$. This splitting enables us to extract a crucial leading order cancellation at the top of the lower deck. We develop a functional framework to subsequently extend this cancellation into the interior of the lower deck, which enables us to prove the local well-posedness of the model in tangentially real analytic spaces.
\hfill {\bf \today}  
\end{abstract}

\setcounter{tocdepth}{1}
\tableofcontents

\section{Introduction}
A fundamental challenge in fluid mechanics is to describe the vanishing viscosity limit $(\nu \rightarrow 0)$ of the Navier-Stokes equations on domains with a solid boundary. In this paper we consider the fluid domain to be the two-dimensional half space $\mathbb{H}$.  The main difficulty is due to the incompatibility between the no-slip boundary condition for the Navier-Stokes velocity field ($\bold{U}^{\nu}|_{\p \Omega} = 0$) and the slip boundary condition for the Euler velocity field $(\bold{U}^E|_{\p \Omega} \cdot (0,-1)= 0)$, which makes it difficult to obtain uniform in $\nu$ estimates for norms of $\bold{U}^\nu$ which are stronger than $L^\infty_t L^2_x$. 

\subsection{Historical Overview}

In order to rectify this mismatch, Prandtl~\cite{Prandtl1904} proposed the existence of a thin, $\OO(\nu^{\frac{1}{2}})$, fluid layer near the boundary through which the Navier-Stokes velocity field transitions from an outer Euler flow in the bulk, to the no-slip condition on the solid wall. Mathematically, this corresponds to a formal asymptotic expansion of the viscous incompressible flow $\bold{U}^{\nu}$ as 
\begin{align} \label{Pr.ansatz}
\bold{U}^\nu(t, x,y) = \bold{U}^E(t, x,y) + \bold{U}^{BL}(t,x, \frac{y}{\sqrt{\nu}}) + \OO(\nu^{\frac{1}{2}})
\end{align}
where  $\bold{U}^E$ is the  Euler flow typically assumed to be known \textit{a-priori}, and $\bold{U}^{BL}$ is known as the Prandtl boundary layer corrector. The boundary layer unknown $\bold{u}^P = [u^P,v^P] := \bold{U}^{BL} + \bold{U}^E|_{Y = 0}$ is a function of the tangential variable $x$ and the normal fast variable $\bar Y = \frac{y}{\sqrt{\nu}}$, and is governed by the famous Prandtl boundary layer equations 
\begin{subequations} \label{Pr:sys}
\begin{align} \label{Pr:sys:a}
&\p_t u^P + u^P \p_x u^P + v^P \p_{\bar Y} u^P - \p_{\bar Y}^2 u^P =  - \p_x P^E(t,x,0) , \\ \label{Pr:sys:b}
&\p_x u^P + \p_{\bar Y} v^P = 0, \\ \label{Pr:sys:c}
&[u^P, v^P]|_{{\bar Y} = 0} = 0, \hspace{3 mm} u^P|_{{\bar Y} \to 0} = u^E(t,x,0), \hspace{3 mm}  
\end{align}
\end{subequations}
posed in the half space $\HH = \{ (x,\bar Y) \colon \bar Y > 0\}$. The system treats the Euler pressure trace $P^E$ and the Euler wall slip velocity $u^E$ as known, and is supplemented with an initial condition $u^P|_{t = 0} = u^P_0(x,\bar Y)$.

A first step towards establishing the validity or the invalidity of the Prandtl expansion \eqref{Pr.ansatz} is a detailed understanding of the Prandtl system \eqref{Pr:sys} itself.  The well- and ill-posedness of the Prandtl equations has a long history of which we only provide a very brief summary (see the reviews~\cite{BardosTiti13,MaekawaMazzucato16} for further references). Under the monotonicity assumption  $\p_y u^P|_{t = 0} > 0$, Oleinik~\cite{Oleinik66,OleinikSamokhin99} obtained global in time, regular solutions on the domain $[0, L] \times \mathbb{R}_+$ for small $L$, and local in time regular solutions for arbitrary finite $L$. The aforementioned results rely on the Crocco transform, which is available from the monotonicity hypothesis. See also the global in time existence result of weak solutions obtained in \cite{XinZhang04}  under the additional assumption of a favorable pressure gradient $\p_x P^E(t,x) \le 0$. Without using the Crocco transform, local existence was established in the works of \cite{MasmoudiWong15,KukavicaMasmoudiVicolWong14} using energy methods and \cite{AlexandreWangXuYang14} using a Nash-Moser iteration. When the monotonicity assumption is removed, local well-posedness results for \eqref{Pr:sys} were first established  assuming tangential real analyticity of the initial datum~\cite{SammartinoCaflisch98a,LombardoCannoneSammartino03,KukavicaVicol13} (see also~\cite{IgnatovaVicol16} for an almost global existence result for small datum), and more recently assuming only tangential Gevrey-class regularity~\cite{GerardVaretMasmoudi13,LiYang16}. The sharp Gevrey-$2$ result without any structural assumptions was recently established in~\cite{DiGV18}. On the other hand, in Sobolev spaces without monotonicity, the Prandtl equations are ill-posed, as was shown in~\cite{GerardVaretDormy10,GuoNguyen11,GerardVaretNguyen12,LiuYang17}.

Concerning the validity of Prandtl ansatz~\eqref{Pr.ansatz}, in the unsteady setting the expansion has been verified locally in time assuming the initial datum is real-analytic~\cite{SammartinoCaflisch98b,WangWangZhang17,NguyenNguyen18}, under the assumption that the initial vorticity is supported away from the boundary~\cite{Maekawa14,FeiTaoZhang16,FeiTaoZhang18}, in the Gevrey setting for initial data close to certain stable shear flows~\cite{GerardVaretMaekawaMasmoudi16}, or assuming that the initial vorticity is analytic only near the boundary of the half space~\cite{KukavicaVicolWang19}. 
In contrast, for initial datum in Sobolev spaces the ansatz \eqref{Pr.ansatz} has been proven to be {\em invalid}~\cite{Grenier00,GrenierGuoNguyen14b,GrenierGuoNguyen14c,GrenierNguyen17}, with the recent result~\cite{GrenierNguyen18a} proving that the expansion is not valid in the $L^\infty$ topology.

A notable success of the Prandtl theory is in the steady regime, where it was in fact derived in~\cite{Prandtl1904}. For steady flows  in~\eqref{Pr:sys:a},   the initial datum~\eqref{Pr:sys:c} is typically replaced by in-flow data at $\{x = 0\}$, which represents for instance the leading edge of a flat plate. Shortly after Prandtl's original work,  Blasius~\cite{Blasius08} discovered the self-similar solution to the steady Prandtl equations 
\begin{subequations}
\label{eq:Blasius}
\begin{align} \label{Blasius.a}
&[u^P, v^P] := [f'(\eta), \frac{1}{\sqrt{x}} \{ \eta f'(\eta) - f(\eta)\} ],  \text{ where } \eta = \frac{y}{\sqrt{x}}, \\ \label{Blasius.b}
&f f'' + f''' = 0, \qquad f'(0) = 0, f'(\infty) = 1, \frac{f(\eta)}{\eta} \xrightarrow{\eta \rightarrow \infty} 1. 
\end{align}
\end{subequations}

\noindent Experiments have confirmed the accuracy of \eqref{Pr.ansatz} for steady flow over a plate to a remarkable degree of precision~\cite{Schlichting60}, especially for the Blasius self-similar boundary layers \eqref{eq:Blasius}. Mathematically, in the steady case, the ansatz has been recently verified in \cite{GerardVaretMaekawa18} for shear boundary layer flows which arise from forced Navier-Stokes equations, and \cite{GuoIyer18,GuoIyer18a} for a general class of $x$-dependent boundary layer flows, which arise from homogeneous Navier-Stokes flows, and which include the Blasius solution. See also~\cite{GuoNguyen14} for related results on a moving plate. 

In spite of  the success of the Prandtl theory in the steady regime, the phenomenon of boundary layer separation  remains mostly unsolved, both in the steady and the unsteady regimes~\cite{Schlichting60,SychevEtAl98,CousteixMauss07,SmithBrown12}. In the unsteady case, the van Dommelen and Shen singularity~\cite{VanDommelenShen80}, which was recently proven to occur rigorously~\cite{EEngquist97,KukavicaVicolWang17,CollotGhoulMasmoudi18,CollotGhoulIbrahimMasmoudi18} may be seen as as a diagnostic of separation~\cite{GarganoSammartinoSciacca09}: an adverse Euler pressure gradient causes a finite time singularity in the displacement thickness, and so the flow is detached from the flat plate. The vorticity generated at the boundary is ejected into the bulk of the flow where it rolls up and is considered as one of the factors responsible for the anomalous dissipation of energy. In the steady case the detachment of the boundary layer from the flat plate was predicted by Goldstein~\cite{Goldstein48} and has been proven recently in~\cite{DalibardMasmoudi18}. This breakdown of the assumptions on which Prandtl equations are derived signals the limitations of the classical Prandtl boundary layer theory, and new, higher order, theories are required in order to model the inviscid-boundary layer coupling near points of separation \cite{CebeciCousteix05,CousteixMauss07}. 

Two well-known higher order models are the {\em Prescribed Displacement Thickness} (PDT) model~\cite{CatherallMangler66} and the {\em Interactive Boundary Layer} (IBL) model~\cite{Carter74,LeBalleur90,Lagree10}. For instance, in the IBL model the Euler flow and boundary layer flow are strongly coupled through a boundary condition of the type 
\begin{align} \label{eq:IBL}
v^E|_{y = 0} = \sqrt{\nu}\p_x \{ \kappa u^E|_{y = 0} \}, \qquad \kappa := \int_{\mathbb{R}_+} \Big(1 - \frac{u^P}{u^E|_{Y = 0}} \Big) \ud y, \qquad  u^P|_{y \to \infty} = u^E(t,x,0).  
\end{align}
This model has been studied rigorously in~\cite{DDLM18}, where it is shown to be  {\em linearly ill-posed even in analytic spaces}. Similar dramatic ill-posedness results are shown in \cite{DDLM18} to hold for the PDT model. These severe instabilities in the PDT and IBL higher order boundary layer models lead us to consider the Triple Deck system, which is the main purpose of this paper.

\subsection{Triple Deck equations}
In order to describe the Triple Deck system, it is useful to keep in mind the below diagram, taken from \cite[pp.~220, Figure 4]{Smith82}, which describes the steady flow past a finite plate whose boundary is at $\{y=0\}$, with a leading edge to the left and a trailing in the bottom center of the figure:
 \begin{center}
\includegraphics[width=0.6\textwidth]{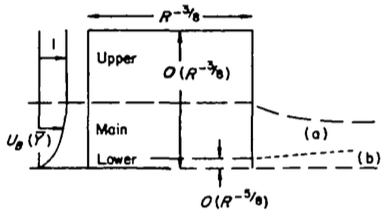}
\end{center}
Here $R$ denotes the Reynolds number. Near the leading edge of the plate, the flow is accurately described by the Prandtl theory, and in particular by the self-similar Blasius profile~\eqref{eq:Blasius}. The trailing edge of the plate creates a disturbance, and the flow undergoes the so-called Goldstein singularity. The triple-deck theory describes specifically the transition from the Blasius profile on the left of the plate to the Goldstein near which occurs after the trailing edge of the plate. This was formalized in the works of \cite{Stewartson68,Stewartson69,Neiland69,Messiter70} who proposed the {\em three deck} structure, and introduce the horizontal $\OO(\nu^{-\frac 5 8})$ and vertical $\OO(\nu^{-\frac 5 8}), \OO(\nu^{-\frac 3 8})$ length scales  that are not present in the Prandtl theory. The notion of introducing different scales at the point of boundary layer separation was   introduced earlier in~\cite{Lighthill53}.  We refer the reader to the works~\cite{Smith82,Meyer82,Klingenberg83,Meyer83,CowleyTutty85,Lagree} for an overview of the ideas and history behind the Triple Deck model, and include a formal derivation of the unsteady triple deck model (cf.~system \eqref{eq:TD:main}--\eqref{eq:TD:A:p} below) in Appendix~\ref{sec:appendix} of  this paper.

Specifically, see e.g.~\cite[Section 3]{Smith82},~\cite[Section 2]{CowleyTutty85} or~\cite[Section 4]{Duck87}, we consider the {\em unsteady  Triple Deck system} posed in the half space $\HH = \{ (x,y) \colon y> 0\}$, which is given by 
\begin{subequations}
\label{eq:TD:main}
\begin{align}
\partial_t u + u \partial_x u + v \partial_y u &= - \partial_x p + \partial_y^2 u \label{eq:TD:main:a}\\
\partial_x u + \partial_y v &=0 \label{eq:TD:main:b}\\
\partial_y p &= 0 \label{eq:TD:main:c}
\end{align}
\end{subequations}
supplemented with the boundary conditions 
\begin{subequations}
\label{eq:TD:BC}
\begin{align}  \label{eq:TD:BC:a}
&u(x,0,t)= v(x,0,t) = 0,  \\ \label{eq:TD:BC:b}
&u(x,y,t) - y \to 0  \qquad \mbox{as} \qquad x \to - \infty \\ \label{eq:TD:BC:c}
&u(x,y,t) - y \to A(x,t) \qquad \mbox{as} \qquad y \to + \infty  
\end{align}
\end{subequations}
and with the {\em pressure-displacement relation}
\begin{align}
\label{eq:TD:A:p}
p(x,t) = \frac{1}{\pi} p.v. \int_{\RR} \frac{(\partial_x A)(\bar x,t)}{x-\bar x} d\bar x =  |\partial_x| A(x,t)  
\end{align}
characteristic of incompressible flows. Note that other pressure-displacement relations may be specified in the case of supersonic and jet-like compressible boundary layer flows (see e.g.~\cite[Equation (2.4a-e)]{CowleyTutty85}). The system \eqref{eq:TD:main}--\eqref{eq:TD:A:p} is supplemented with a compatible initial condition
\begin{align}
\label{eq:TD:IC}
u|_{t=0} = u_0
\end{align}
on $\HH$.  Note that while equations \eqref{eq:TD:main} look the same as the classical Prandtl equations, the main difference is that $p$ is not given in advance, and neither is the value of $u$ at the top of the  lower deck. Instead, these are coupled by the relation \eqref{eq:TD:A:p} above.

The Triple Deck and the IBL models share the common feature that $u$, respectively $u^P$,  converge  as $y \to \infty$ to a function that is not given \textit{a-priori}, and must determined through the evolution. However, in contrast to \eqref{eq:IBL} in which $u^P|_{y \to \infty}$ is governed ultimately by the Euler equations, the behavior of $u|_{y \to \infty}$ in~\eqref{eq:TD:main} is governed by the Benjamin-Ono equations, as is shown in Section~\ref{sec:BO:evo}. It has been alluded to in~\cite{DDLM18} (see also~\cite{Smith79,Duck87}) that the Triple Deck has favorable stability features relative to the IBL model, but to our knowledge this has not been studied mathematically until the present work. In fact, it is not known whether the system \eqref{eq:TD:main}--\eqref{eq:TD:IC} is well-posed, even locally in time.

The unsteady Triple Deck model poses significant mathematical difficulties, because the map $u \mapsto \partial_x p$ loses two derivatives in $x$ (in view of \eqref{eq:TD:A:p}) and half a derivative in $y$ (due to the restriction to the boundary $\{y=\infty\}$). The {\em two derivative loss} in   $x$   seems to preclude the well-posedness of the system, even in spaces of analytic functions. Our goal is to show that due to a certain cancellation in $L^2_x$, the   loss is only of one derivative in $x$, and hence the system admits local in time real-analytic solutions with respect to $x$, which are   Sobolev smooth in $y$. Our main result is Theorem~\ref{thm:main} below, which  may be stated informally as: assume that $A_0(x)$ is real-analytic and that  the function  $u_0(x,y) - y - A_0(x)$ is  tangentially real-analytic and lies in a weighted $L^2$ space with respect to the normal variable; then there exits locally in time a unique solution in this class. We discuss the main difficulties and the main ideas of the  proof in Section~\ref{sec:main} below. Prior to this, we introduce the functional setting of the paper and the decomposition \eqref{eq:decomp:1}--\eqref{eq:decomp:2} of the solution.

\subsection{Main result and functional setting} 
 
\subsubsection{Analytic norms}
In order to measure decay in $y$, we introduce the $y$-weight given by 
\begin{align}
\rho (y,t) = e^{\frac{y^{2}}{8(1+t/\eps)}} \, . 
\label{eq:rho:def}
\end{align} 
Note that $\rho$ does not depend on $x$. The parameter, $\eps$, appearing in \eqref{eq:rho:def} will be selected small, based only on the initial datum, according to the relation \eqref{choice:eps:a}. The time scale over which we prove existence will be, without loss of generality, restricted to $t \in [0,T_\ast]$, where $T_\ast \le \eps$, so that in particular the quotient $t/\eps$ appearing in the weight above is bounded. 

We denote the Fourier transform a function $f$ in the $x$-variable only, at frequency $\xi \in \RR$, as $f_\xi= f_\xi(y,t)$. Since $f$ is real-valued, we automatically have that $f_{-\xi} = \overline{f_\xi}$. 
For $\tau = \tau(t)>0$, $r > 2$, and a function $f(x,y,t)$ we use Plancherel to define 

\begin{align} \label{eq:norm}
\norm{f}_{\tau,r}^2 &= \norm{\rho \, e^{\tau |\partial_x|} f}_{H^r_x L^2_y}^2 = \int_{\RR}  \int_0^\infty  \rho^2(y) \abs{f_\xi(y)}^2 e^{2\tau |\xi|} \brak{\xi}^{2r} \ud y  \ud \xi 
\end{align}
where $\brak{\xi}^2 = 1 + |\xi|^2$, and we have suppressed the time dependence of $\tau, \rho$, and $f_\xi$.
Associated to this norm, it is convenient to also define the inner product
\begin{align*} 
\brak{f,g}_{\tau,r} = \int_\RR \int_0^\infty  \rho^2(y)  f_\xi(y) \overline{g_\xi(y)} e^{2\tau |\xi|} \brak{\xi}^{2r} \ud y  \ud \xi 
\end{align*}
where the time dependence is suppressed. The idea to use real-analytic norms of the type~\eqref{eq:norm} goes back to the work of Foias-Temam~\cite{FoiasTemam89} in the context of the Navier-Stokes equations, and to~\cite{LevermoreOliver97} in the context of the Euler equations. See also~\cite{OliverTiti00,KukavicaVicol09,BonaGrujicKalisch10} and references therein.
 
Notice that by definition of the $\norm{\cdot}_{\tau,r}$ norm, we have the identity  
\begin{align}
\frac{1}{2} \frac{d}{dt} \norm{f}_{\tau,r}^2 +  (-\dot{\tau}) \norm{|\partial_x|^{1/2}f}_{\tau,r}^2 
&= \int_{\RR} \left( \frac 12 \frac{d}{dt} \norm{\rho  \, f_\xi}_{L^2_y([0,\infty))}^2 \right) e^{2\tau |\xi|} \brak{\xi}^{2r} \ud\xi      
\notag\\
&= \brak{\partial_t f + f \partial_t (\log \rho), f}_{\tau,r}  \, .
\label{eq:ODE:XY}
\end{align}
Therefore, a decrease in the analyticity radius yields a $|\partial_x|^{1/2}$-dissipative term.

We introduce similar analytic norms for functions $g(x,t)$, which are independent of $y$. Here, we let
\begin{align}
\label{eq:tilde:X:norms}
\norm{g}_{\tilde{\tau,r}}^2 = \norm{e^{\tau|\partial_x|} g}_{H^r_x} = \int_{\RR} |g_\xi|^2 e^{2\tau |\xi|} \brak{\xi}^{2r} \ud\xi  
\end{align} 
with associated inner product
\begin{align*}
\brak{f,g}_{\tilde{\tau,r}} = \int_\RR   f_\xi  \overline{g_\xi} e^{2\tau |\xi|} \brak{\xi}^{2r}  \ud \xi \,.
\end{align*}
As in \eqref{eq:ODE:XY}, we have
\begin{align}
\frac 12  \frac{d}{dt} \norm{g}_{\tilde{\tau,r}}^2  +  (-\dot{\tau}) \norm{|\partial_x|^{1/2} g}_{\tilde{\tau,r}}^2  = \int_{\RR} \left(\frac 12 \frac{d}{dt}  |g_\xi|^2   \right) e^{2\tau |\xi|} \brak{\xi}^{2r} = \brak{\partial_t g, g}_{\tilde{\tau,r}}\, .
  \label{eq:ODE:tilde:XY}
\end{align}
Having defined the basic norms, we turn to the definition of the total norms used in this paper and the corresponding unknowns that we measure using these norms.

\subsubsection{Representation of the solution and the total energy}
We shall work with the following decomposition of the solution $u(x,y,t)$ of \eqref{eq:TD:main}--\eqref{eq:TD:BC}.  We write
\begin{align}
u(x,y,t) = y + w(x,y,t)
\label{eq:decomp:1}
\end{align}
where the function $w(x,y,t)$ is defined in terms of its tangential (i.e., with respect to $x$) Fourier transform coefficients, $w_\xi(y,t) = \int_{\RR} w(x,y,t) e^{-i x \xi} dx$, given by 
\begin{align}
w_\xi(y,t) = \bar w_{\xi}(y,t) + A_\xi(t) \theta_\xi(y,t)
\, .
\label{eq:decomp:2}
\end{align}
The Gaussian weight function $\theta_\xi(y,t)$ is defined {\em explicitly} in \eqref{eq:theta:def} below.
The coefficients $A_{\xi}(t)$ are nothing but the Fourier coefficients in $x$ of the function $A(x,t) = \lim_{y\to \infty} w(x,y,t)$. In Section~\ref{sec:BO:evo} we show that $A$ obeys a {\em forced Benjamin-Ono equation}, cf.~\eqref{eq.A}, which arises as a compatibility equation for \eqref{eq:TD:main}--\eqref{eq:TD:BC}. On the other hand, the main unknowns $w_{\xi}(y,t)$ are shown in Section~\ref{sec:Prandtl:evo} to solve an evolution equation, cf.~\eqref{eq:bar:w:evo}--\eqref{eq:NLMB:def}, which has a very similar structure to the classical Prandtl system, with the addition of certain singular coupling terms to the evolution for $A$. The point is that the original function $u$ may be reconstructed explicitly from knowledge of the Fourier coefficients $\bar w_\xi$ and $A_\xi$. Accordingly, our total norms measure the analytic regularity of $\bar w$ and $A$. 

Throughout the paper fix a value for $r>2$ and a smooth cutoff function $\chi(y)$ approximating ${\bf 1}_{\{y\geq 2\}}$ (defined in \eqref{eq:chi:def} below). For a function $\tau(t)>0$ to be defined later, for a parameter $\delta > 1$ to be chosen precisely later, and with the norms $\norm{\cdot}_{\tau,r}$ and $\norm{\cdot}_{\tilde{\tau,r}}$ defined in \eqref{eq:norm} respectively \eqref{eq:tilde:X:norms}, we let
\begin{subequations}
\label{eq:cluster:fuck}
\begin{align} \label{X:tau:norm:def}
\norm{(\bar w,A)}_{X_\tau} 
&= \norm{\bar w(t)}_{\tau(t),r} + \frac{1}{\delta} \norm{\chi \p_y \bar w}_{\tau(t),r-1/2} +  \norm{A(t)}_{\tilde{\tau(t),r}} \\ \label{Y:tau:norm:def}
\norm{(\bar w,A)}_{Y_\tau} 
&=  \norm{\abs{\p_x}^{1/2}  \bar w(t)}_{\tau(t),r} + \frac{1}{\delta} \norm{\chi \abs{\p_x}^{1/2}  \p_y \bar w}_{\tau(t),r-1/2} +  \norm{\abs{\p_x}^{1/2}  A(t)}_{\tilde{\tau(t),r}}\\ \label{norm:Z:tau}
\norm{ \bar w }_{Z_\tau} 
&=  \norm{\p_y \bar w(t)}_{\tau(t),r} + \frac{1}{\delta} \norm{\chi  \p_{yy} \bar w}_{\tau(t),r-1/2} \\ \label{norm:H:tau}
\norm{ \bar w }_{H_\tau} 
&=  \norm{y \bar w(t)}_{\tau(t),r} + \frac{1}{\delta} \norm{y \chi  \p_{y} \bar w}_{\tau(t),r-1/2}
\, .
\end{align}
\end{subequations}
The $X_\tau$ norm is the main analytic-in-$x$ and weighted $L^2$-in-$y$ norm used in this paper. The $Y_\tau$ norm quantifies dissipation in the $x$ variable due to a shrinking  analyticity radius, the $Z_\tau$ norm quantifies dissipation in the $y$ variable due to the $\partial_{yy}$ terms present in the equation, while the $H_\tau$ norm encodes a gain of a $y$ weight which is important due to the unboundedness of the domain $[0,\infty)$.  Associated to these norms we   define the total analytic energy via 
\begin{align} \label{def:E}
E(T) &= \sup_{t \in [0, T]} \norm{(\bar w,A)}_{X_{\tau(t)}}^2 + \int_0^T \!   \| (\bar{w}, A) \|_{Y_\tau}^2 \ud t + \frac{1}{16} \int_0^T \!  \norm{\bar w}_{Z_{\tau(t)}}^2 \ud t + \frac{1}{64 \eps} \int_0^T \!   \norm{\bar w}_{H_{\tau(t)}}^2 \ud t.
\end{align}

\subsubsection{Main Theorem and Overview of Proof}
\label{sec:main} 

We are now ready to state the main result. 
\begin{theorem}[\bf Main Theorem] \label{thm:main} Fix an initial radius of analyticity $\tau_0 > 0$, and any $r > 2$, where $r$ is the analytic weight parameter appearing in \eqref{eq:norm}. We decompose the initial data  in the following form, written on the Fourier side in the tangential variable 
\begin{align*}
u^{(0)}_\xi(y) = y + \theta_\xi(y, 0) A^{(0)}(\xi) + \bar{w}^{(0)}_\xi(y) \, ,
\end{align*}
where $\theta_\xi$ is defined in \eqref{eq:theta:def}. Assume that  $\bar{w}^{(0)}$ and $A^{(0)}$ satisfy 
\begin{align*}
E_0 := \| (\bar{w}^{(0)}, A^{(0)}) \|_{X_{10 \tau_0}} < \infty,
\end{align*} 
where the analytic energy is defined in \eqref{def:E}. Then there exists a $T_\ast > 0$ depending on $\tau_0, r, E_0$, and there exists a unique solution $(\bar{w}, A)$ to the coupled system \eqref{eq:bar:w:evo}, \eqref{eq:A:evo} with initial datum $(\bar{w}, A)|_{t = 0} = (\bar{w}^{(0)}, A^{(0)})$ such that the total analytic energy $E(t)$ is defined \eqref{def:E} is bounded as
\begin{align*}
\sup_{t \in [0, T_\ast]} E(t) \leq 2 E_0 \,.
\end{align*}
Equivalently, this defines a unique tangentially analytic solution
\begin{align*}
u_\xi = y + \theta_\xi(t,y) A_\xi(t) + \bar{w}_\xi(t,y)
\end{align*}
to the original system \eqref{eq:TD:main}--\eqref{eq:TD:A:p}.
\end{theorem}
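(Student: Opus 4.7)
The plan is a standard continuity argument tailored to the analytic framework of \eqref{eq:cluster:fuck}: close an a priori estimate of the form $E(T) \le E_0 + C\int_0^T E(t)^{3/2}\, dt$ for smooth solutions of the coupled system \eqref{eq:bar:w:evo} and \eqref{eq:A:evo}, with the analyticity radius $\tau(t)$ chosen to decrease at a rate proportional to $-E(t)^{1/2}$ so that the $-\dot\tau\,\|\cdot\|_{Y_\tau}^2$ term appearing on the left of \eqref{eq:ODE:XY} and \eqref{eq:ODE:tilde:XY} absorbs one tangential derivative of loss in the nonlinearities (the Foias--Temam mechanism \cite{FoiasTemam89}). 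Existence then follows by constructing approximate solutions via a tangential high-frequency truncation combined with a small added dissipation $\nu\p_x^2$, and uniqueness follows by running the same type of energy estimate on the difference of two solutions in a slightly diminished analyticity radius.

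For the a priori estimate itself, I would differentiate $\|\bar w\|_{\tau,r}^2$ and $\|A\|_{\tilde{\tau,r}}^2$ in time via \eqref{eq:ODE:XY} and \eqref{eq:ODE:tilde:XY} and insert the evolution equations. The $\bar w\,\p_t\log\rho$ contribution, upon using the explicit form \eqref{eq:rho:def} of $\rho$, generates the dissipative term $-\|\bar w\|_{H_\tau}^2/(64\eps)$; integration by parts in $y$ on the $\p_y^2\bar w$ term produces the $\|\bar w\|_{Z_\tau}^2$ dissipation, modulo boundary contributions at $y=0,\infty$ that either vanish or are controlled by the cutoff $\chi$ built into \eqref{X:tau:norm:def}--\eqref{norm:H:tau}. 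The Prandtl transport nonlinearities $\bar w\,\p_x \bar w$ and $v\,\p_y\bar w$ (with $v=-\int_0^y \p_x u\,dy'$ reconstructed from \eqref{eq:TD:main:b} and split according to \eqref{eq:decomp:2}), the Benjamin--Ono nonlinearity in the $A$-equation, and the mixed $\theta_\xi A_\xi$-coupling terms are then estimated by tame bilinear analytic estimates, obtained from the convolution inequality for the weight $e^{\tau|\xi|}\jap{\xi}^r$ combined with Cauchy--Schwarz in $y$ (trading $y$-weights for $H_\tau$ factors where necessary). Schematically each term is bounded by $E^{1/2}\,\|\cdot\|_{Y_\tau}^2$ plus $E^{3/2}$, and the former is absorbed into the dissipative $-\dot\tau\,\|\cdot\|_{Y_\tau}^2$ by the stated choice of $\dot\tau$.

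The genuine obstacle, and the heart of the theorem, is the pressure--displacement term $\p_x p=\p_x|\p_x|A$ forcing the $\bar w$-equation: it formally costs two tangential derivatives on $A$, whereas the shrinking-radius mechanism, which yields only $|\p_x|^{1/2}$-strength $Y_\tau$-dissipation, can absorb at most one derivative of loss. This is precisely what motivates the splitting \eqref{eq:decomp:2}: the explicit Gaussian profile $\theta_\xi(y,t)$ is designed so that at $y=\infty$ the $\bar w$-equation matches, to leading order, the forced Benjamin--Ono evolution driving $A$, producing an exact cancellation of the top-order $|\p_x|A$ contribution at the top of the lower deck. Propagating this cancellation into the interior, which is the structural reason that $\chi\p_y \bar w$ enters \eqref{X:tau:norm:def}--\eqref{norm:Z:tau} at regularity $r-\tfrac12$ rather than $r$, reduces the effective loss in the $\bar w$-equation to a single $\p_x$, which the $Y_\tau$-dissipation can then absorb. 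I expect this step to be the main technical difficulty: quantifying the cancellation, controlling the commutators between the analytic weight $e^{\tau|\xi|}\jap{\xi}^r$, the cutoff $\chi$, and the nonlocal operator $|\p_x|$, and verifying that the residual one-derivative remainder is bounded by $\|A\|_{\tilde{\tau,r}}^{1/2}\|A\|_{\tilde{Y_\tau}}^{1/2}$ times the corresponding $\bar w$-energies so that the Gronwall argument on $E(t)$ actually closes.
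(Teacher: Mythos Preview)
Your overall architecture is right—Foias--Temam analytic energy, shrinking radius absorbing one tangential derivative, Benjamin--Ono cancellation at $y=\infty$—but you misidentify the two key interior mechanisms, and as written the estimate would not close.

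First, the reduction of the $\p_x|\p_x|A$ loss in the interior is \emph{not} achieved via the $r-\tfrac12$ shift on $\chi\p_y\bar w$. After using the $A$-equation to replace $\p_tA_\xi+i\xi|\xi|A_\xi$ by lower-order terms, the dangerous residual in $\Bcal_\xi$ is $(\theta_\xi-1)\,i\xi|\xi|A_\xi$. The point is that the $\xi$-dependence of the lift \eqref{eq:theta:def} yields $\brak{\xi}^{3/2}\|y\rho^2(1-\theta_\xi)\|_{L^2_y}\lesssim 1$; combining this with the Hardy inequality $\|\bar w_\xi/y\|_{L^2_y}\lesssim\|\rho\p_y\bar w_\xi\|_{L^2_y}$ converts the two-derivative loss into $\||\p_x|^{1/2}A\|_{\tilde{\tau,r}}\|\p_y\bar w\|_{\tau,r}$, i.e.\ a half-derivative on $A$ against a $Z_\tau$ factor. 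This is the interior cancellation, and it has nothing to do with the vorticity norm.

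Second, the $r-\tfrac12$ shift and the separate $\chi\p_y\bar w$ energy serve a different purpose you have not addressed: the term $\Mcal_\xi$ contains $I_y[\theta_\eta]\p_y\bar w_{\xi-\eta}$ with $|I_y[\theta_\eta]|\le y$, which loses a $y$-weight and forces control of $\|y\chi\p_y\bar w\|_{\tau,r-1/2}$. To bound that, one must run an additional energy estimate on the vorticity equation for $\chi\p_y\bar w$; the regularity drop to $r-\tfrac12$ is what prevents this second estimate from demanding $\|y\chi\p_{yy}\bar w\|_{\tau,r}$ and cascading. Your proposal treats the $y$-weight issue as a routine trade against $H_\tau$, but without the vorticity estimate the $H_\tau$ norm cannot be propagated.

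Finally, the closing inequality is not of the clean form $E'\lesssim E^{3/2}$. The $\p_t\theta_\xi$ contributions produce $\eps^{-1}$ factors, and $\|\bar w\|_{Z_\tau},\|\bar w\|_{H_\tau}$ are only controlled in $L^2_t$, so one must choose $\dot\tau$ to also absorb $\|\bar w\|_{Z_\tau}+\delta\|\bar w\|_{H_\tau}+\eps^{-1}$, and then fix $\delta$, $\eps$, $T_*$ in that order. Your choice $\dot\tau\sim -E^{1/2}$ would not suffice.
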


The central difficulty towards establishing Theorem~\ref{thm:main} is the apparent loss of two $x$-derivatives in the coupled equations \eqref{eq:TD:main:a} and \eqref{eq:TD:A:p}. Indeed, replacing $-\p_x p$ on the right-hand side of \eqref{eq:TD:main:a} with $\p_x |\p_x|A(x,t)$ according to \eqref{eq:TD:A:p}, and subsequently replacing $A$ with $u(x,y) - y|_{y \rightarrow \infty}$, we see that, in terms of a formal derivative count we have
\begin{align} \label{loss}
\p_t u + u \p_x u + v \p_y u = - \p_x |\p_x| u(x,\infty) \, .
\end{align}  
This loss of \textit{two} $x$ derivatives precludes the  well-posedness of the system even in analytic spaces. 
Our starting point is the observation of skew-adjointedness of the loss term on the right-hand side of \eqref{loss}. Indeed, for any smooth decaying function $g(x)$ one has 
\begin{align}
 \int_\RR g \, \p_x |\p_x| g    = 0 =  \int_\RR |\p_x| g \, \p_x |\p_x| g.
 \label{eq:cancellation}
\end{align}
The cancellation~\eqref{eq:cancellation} holds because we have $|\p_x| = - H \p_x$, where $H$ is the Hilbert transform, and both $H$ and $\p_x$ are skew-adjoint operators on $L^2(\mathbb{R})$. 

This motivates our main reformulation of the system and the extraction of the unknowns we analyze. First, we notice that according to \eqref{eq:TD:BC:c}, $u$ grows like $y$ as $y \to \infty$, while $\p_x u = - \p_y v$ converges to $\p_x A(t,x)$, a bounded function as $y \to \infty$. Hence $v = - I_y[\p_x u]$ also grows like $y$ at $\infty$. Here and throughout the paper we write 
\begin{align}
 I_y[f] = \int_0^y f(y') \ud y' \, .
 \label{eq:funky:notation}
 \end{align}
We are thus led to introduce the expansion of $v$ at $\infty$: 
\begin{align*}
v = y v_1(t,x) + v_0(t,x) + \OO(y^{-1})\quad  \text{ as } \quad y \to \infty.
\end{align*}
The coefficient $v_0(t,x)$ will play a crucial role in the analysis, and is given by the nonlocal integral $I_\infty[\p_x u - \p_x A]$.   We thus reinterpret \eqref{eq:TD:main}  as giving three relations simultaneously, corresponding to the orders of growth as $y \to \infty$. First, collecting the contributions from \eqref{eq:TD:main:a} which are $\OO(y)$ (arising from the terms $u \p_x u$ and $ v \p_y u$), we obtain the asymptotic information that $v_1 = - \p_x A$. Second, we collect the terms which contribute $\OO(1)$ terms at $y = \infty$. This yields a forced Benjamin-Ono equation for the unknown $A(t,x)$: 
\begin{align} \label{A:eq:Intro}
\p_t A + A \p_x A + \p_x |\p_x|A = - v_0 \quad \text{ on } \quad \mathbb{R}.
\end{align}
The cancellation alluded to earlier in \eqref{eq:cancellation} is now readily apparent upon computing the inner-product of $A$ against the Benjamin-Ono equation. 

Having extracted the $\OO(y)$ and $\OO(1)$ contributions, the third step is to extract the  functions in \eqref{eq:TD:main:a} which decay as $y\to \infty$. The relevant unknown, $\bar{w}$, is then a homogenized version of $u$, and it obeys a Prandtl type equation (see~\eqref{eq.hom.w.a} below). This procedure gives rise to the start of our analysis: we analyze simultaneously a Benjamin-Ono equation for $A$, forced by $v_0 = v_0(\bar{w})$, as well as a Prandtl type equation for $\bar{w}$, forced by $A$ related quantities. Summarizing, the simultaneous system of equations we extract are (leaving $F$ an unspecified forcing term for now)
\begin{subequations}
\begin{align} \label{intro:BO}
&\p_t A + A \p_x A + \p_x |\p_x| A = - v_0(\bar{w}) \quad \text{ on } \quad \mathbb{R} \hspace{10 mm} \text{(Benjamin-Ono)}, \\ \label{intro:PR}
&\p_t \bar{w} - \p_y^2 \bar{w} + y \p_x \bar w = F(\bar{w}, A) \hspace{10 mm}  \text{ on } \quad \HH \hspace{10 mm} \text{(Prandtl-type)} \, .
\end{align}
\end{subequations}

The cancellation \eqref{eq:cancellation} applies for the quantity $A$, which describes $u$ at $y = \infty$, and thus \eqref{eq:cancellation} should be interpreted as solving the derivative-loss problem {\em at $y = \infty$}. We now must continue exploiting this cancellation for values of $y < \infty$. Indeed, the two-derivative loss is still lurking for finite $y$ through the forcing term in \eqref{intro:PR}. Specifically,  the reader should consult $\mathcal{B}_\xi(\bar{w}, A)$, defined in \eqref{eq:B:def:*}, and in particular the most singular contributions arise from the $\p_t A_\xi$ term, which in turn create a $i \xi |\xi| A_\xi$ contribution, again yielding a two-derivative loss. Our observation is that such a term is accompanied by a factor of $(1-\theta_\xi)$. By selecting the lift function $\theta_\xi$ in a frequency-dependent manner, we are able to gain back $\langle \p_x \rangle^{3/2}$. The idea of tangential-frequency-dependent boundary layer lifts was also successfully used in~\cite{GVMV18} in the contest of the hydrostatic Navier Stokes equations. For us, the selection of a $\xi$-dependent lift, coupled with Hardy-type inequalities with the homogeneous weights of $y$ enables us to gain back enough regularity {\em near $\{y = 0\}$}.

A further difficulty that arises in our analysis is the loss of one $y$-weight. This occurs due to the non-local integral in \eqref{eq:M:def}, which forces the $\bar{w}$ evolution. In order to handle the loss of a $y$ weight, we control the quantity $y \p_y \bar{w}$ in $L^2$, which is seen in the specification of the $\| \cdot \|_{H_\tau}$ in \eqref{norm:H:tau}. To control this component of the $H_\tau$ norm, we in turn need to commute the vector-field $y \ud y$ with the Prandtl system, which necessitates an analysis of the vorticity equation that governs the evolution of $\p_y \bar{w}$. To successfully analyze the vorticity equation, we capitalize on two essential features. First, we only require this enhanced vector-field for values of $y \ge 1$, so we do not see the boundary effect of the vorticity. Second, we can control the $y \ud y$ in a  {\em weaker norm} in terms of $x$ regularity, which is the reason that the second terms in \eqref{norm:Z:tau} and \eqref{norm:H:tau} are measured on the Sobolev scale $r - 1/2$. This type of {\em lagging norm} structure is essential for our scheme of estimates to close, and in particular prevents a further loss of $y$-weight in the vorticity equation. 

\begin{remark}[Notation] We use heavily the notation $\les$ to suppress universal constants. It is important to emphasize that these universal constants are \textit{independent} of small values of $t, \eps, \delta$, where $\eps$ is the weight parameter in \eqref{eq:rho:def}, and $\delta$ is the parameter appearing in our norms, \eqref{X:tau:norm:def}--\eqref{norm:H:tau}.
\end{remark}

\section{The Prandtl-Benjamin-Ono splitting}
 
\subsection{Benjamin-Ono evolution for $A$}
\label{sec:BO:evo}

We need to understand the asymptotic behavior at $y = \infty$ a bit more carefully. First, from \eqref{eq:TD:BC} we obtain
\begin{align*}
u \sim y + A(x,t), \qquad \p_x u \rightarrow \p_x A, \qquad \p_t u \rightarrow \p_t A, \qquad \p_y u \rightarrow1 ,  \, \qquad \text{ as } y \rightarrow \infty.
\end{align*}
The function $v(x,y,t) = - I_y[\p_x u](x,t) = - \int_0^y \partial_x u(x,z,t) dz$ is expected to grow like $y$ at $\infty$, so we let 
\begin{align}
v \sim v_0(x,t) + v_1(x,t)y \, \qquad \text{ as } y \rightarrow \infty.  
\label{eq:v:asymptiotic:1}
\end{align}
We now evaluate the original equation \eqref{eq:TD:main:a} at $y = \infty$ and use the above information to obtain
\begin{align} \label{eq:A:w:y}
\p_t A + (y + A) \p_x A + (v_0 + v_1 y) + \p_x |\p_x| A = 0.   
\end{align}
Due to the super-exponential weights in $y$, $\rho(t,y)$, appearing in our norm  \eqref{eq:norm}, we guarantee that the remaining terms in \eqref{eq:TD:main:a} vanish sufficiently rapidly as $y \to \infty$ so as to not contribute towards \eqref{eq:A:w:y}. From here, we extract two equations by matching the orders of $y$ for $y \to \infty$: 
\begin{subequations}
\begin{align} 
&\p_x A + v_1 = 0,  \label{eq.v1} \\ 
&\p_t A(x,t) + A\p_x A + v_0 + \p_x |\p_x|A = 0. \label{eq.v0}
\end{align}
\end{subequations}
We now compute the function  $v_0$ in a different fashion: 
\begin{align}
v =  - I_y[\p_x u] = - I_y  \Big[ (\p_x u - \p_x A) + \p_x A \Big] =  - y \p_x A -I_y [\p_x u - \p_x A].
\label{eq:v:asymptiotic:2}
\end{align}
Here we use the notation in \eqref{eq:funky:notation} for $I_y[\cdot]$.  
From \eqref{eq:v:asymptiotic:1} and \eqref{eq:v:asymptiotic:2} we deduce that $v_1 = - A_x$ and  that
\begin{align}
v_0(x,t) = -I_\infty [\p_x u(x,y,t) - \p_x A(x,t)]  \, .
\label{eq:v0:def}
\end{align} 
Thus, $v_0$ can be expressed in terms of $u$ and $A$. To emphasize this, we will write $v_0 = v_0(u,A)$. Note that we need to understand $u$ (or $\p_x u$) for all $y$ in order to understand $v_0$ (it is nonlocal).  Inserting back into \eqref{eq.v0}, we obtain the evolution equation for $A$:
\begin{align} \label{eq.A}
\p_t A + A \p_x A  + \p_x |\p_x| A = - v_0(u,A) \text{ for } x \in \mathbb{R}. 
\end{align}

\subsection{Prandtl-type evolution for $\bar{w}$}
\label{sec:Prandtl:evo}

The first step towards homogenizing the boundary conditions for $u$ in the equation \eqref{eq:TD:main:a} is to remove the linear profile $y$ and introduce the unknown 
\begin{align*}
 w = u - y 
\end{align*}
so that \eqref{eq:TD:BC:a}--\eqref{eq:TD:BC:c} yield
\begin{align*}
w|_{y = 0} = w|_{x = -\infty} = w|_{x = \infty} = 0, \qquad w|_{y \to \infty} = A(x,t). 
\end{align*}
We do not need to change $v$ here, as it is given by $-I_y(\partial_x u) = -I_y( \partial_x w)$. It follows that the evolution equation for $w$ is
\begin{align*}
\p_t w + w \p_x w + (y \p_x w +v) + v \p_y w  - \p_{y}^2 w  + \p_x |\p_x|A(x,t) =0. 
\end{align*}
Summarizing, the unknowns $w$ and $v$ take the place of the usual Prandtl unknowns, and the equation obeyed by $w$ is nothing but the usual Prandtl system with a  few extra linear terms
\begin{subequations}
\label{eq:w:evolution}
\begin{align} 
&\p_t w - \p_{yy}w + w\p_x w + v\p_y w + (y \p_x w + v)  + \p_x |\p_x| A = 0, \label{start.Prandtl.a} \\
&w|_{y = 0} = w|_{x = - \infty} = w|_{x = +\infty} = 0, \qquad w|_{y = \infty} = A(x,t),  \label{start.Prandtl.b} \\ 
&\p_x w +\p_yv = 0, \quad  v|_{y = 0} = 0, \quad  \Rightarrow v = - I_y[ \p_x w],  \label{start.Prandtl.c} 
\end{align}
\end{subequations}
The system \eqref{eq:w:evolution} is of course coupled to the evolution equation for $A$ given in \eqref{eq.A}.

In order to analyze the system \eqref{eq:w:evolution}, it is convenient to homogenize the boundary condition of $w$ as $y\to \infty$. For this purpose we introduce a tangential frequency dependent lift of the normal boundary condition, so that we need to write the system obeyed by the Fourier transform in the $x$ variable of \eqref{start.Prandtl.a}--\eqref{start.Prandtl.c}. This yields 
\begin{subequations}
\begin{align*}
&\p_t w_\xi - \p_{yy} w_\xi + (w \partial_x w + v \partial_y w)_\xi + i \xi (y w_\xi - I_y[w_\xi]) + i \xi |\xi| A_\xi  = 0, \\
&w_\xi|_{y = 0} = 0, \qquad w_\xi|_{y \rightarrow \infty} = A_\xi, \\
&i \xi w_\xi + \p_y v_\xi = 0, \qquad v_\xi|_{y= 0} = 0 \Rightarrow v_\xi = - i \xi I_y[w_\xi]. 
\end{align*}
\end{subequations}
For each $\xi \in \RR$ we introduce a lift function $\theta_\xi$ (given explicitly in \eqref{eq:theta:def} below) and define new unknowns, 
\begin{align*}
\bar{w}_\xi &:= w_\xi(y,t) - A_\xi(t) \theta_\xi(y,t) \\ 
\bar{v}_\xi &:= v_\xi + i \xi A_\xi I_y[\theta_\xi]
\,.
\end{align*}
We derive from \eqref{eq:w:evolution} the evolution for $\bar w_\xi$, which reads  
\begin{subequations}
\label{eq:bar:w:evo}
\begin{align} \label{eq.hom.w.a}
&\p_t \bar{w}_\xi - \p_{yy}\bar{w}_\xi + i \xi  y \bar{w}_\xi + \Ncal_\xi(\bar{w}, \bar{w})   + \Lcal_\xi (\bar{w} , A ) + \Mcal_{\xi} (\bar{w} , A ) + \Bcal_\xi (\bar{w}, A) = 0, \\  \label{eq.hom.w.a.b}
&\bar{v}_\xi = - i\xi I_y [\bar{w}_\xi] , \\ \label{eq.hom.w.a.c}
&\bar{w_\xi}|_{y=0} = \bar{w_\xi}|_{y\to \infty} = 0, 
\end{align}
\end{subequations}
where in \eqref{eq.hom.w.a} above we have defined  
\begin{subequations}
\label{eq:NLMB:def}
\begin{align} 
\Ncal_\xi ( \bar{w}, \bar{w} ) 
&:= i \int_{\RR} \Big(\bar w_\eta (\xi-\eta) \bar w_{\xi-\eta}   - \eta I_y[\bar w_\eta] \partial_y \bar w_{\xi-\eta}  \Big)  \ud\eta 
\label{eq:N:def}
\\ 
\Lcal_\xi (\bar{w} , A ) 
&:= i \int_{\RR}\biggl( \bar w_\eta (\xi-\eta) A_{\xi-\eta} \theta_{\xi-\eta} + A_\eta \theta_\eta (\xi-\eta) \bar w_{\xi-\eta}    - \eta I_y[\bar w_\eta] A_{\xi-\eta} \partial_y \theta_{\xi-\eta} \biggr) \ud\eta 
\label{eq:L:def} 
\\ 
\Mcal_{\xi}(\bar{w}, A)
&:= -i \int_{\RR} \eta A_\eta I_y[\theta_\eta] \p_y \bar w_{\xi-\eta}  \ud \eta
\label{eq:M:def}
\\ 
\Bcal_\xi (\bar{w},A) 
&:= A_\xi (\partial_t - \partial_{yy})\theta_\xi + \left(\theta_\xi -1\right) \partial_t A_\xi + (\p_t A_\xi + i \xi |\xi| A_\xi)  + i\xi  \Big( A_\xi \left( y \theta_\xi  - I_y[\theta_\xi] \right) -I_y[\bar w_\xi]  \Big)  \notag\\
&\qquad  
  + i \int_{\RR} \Big(  A_\eta \theta_\eta (\xi-\eta) A_{\xi-\eta} \theta_{\xi-\eta} - \eta A_\eta I_y[\theta_\eta] A_{\xi-\eta} \partial_y \theta_{\xi-\eta} \Big) \ud \eta
\,.\label{eq:B:def}
\end{align}
\end{subequations}

At this stage, we make the following choice for the lift function 
\begin{align}
\theta_\xi(y,t)  = 1 - e^{- \frac{y^2 \brak{\xi}^2}{2(1+t/\eps)}} \, ,
\label{eq:theta:def}
\end{align}
where $\eps>0$ is a parameter to be chosen later. We emphasize here that $\theta_\xi(y,0)$ {\em does not depend on $\eps$}, which is crucial for the proof. Informally, $\eps$ will be selected small relative to universal constants, and relative to the size of the initial data (which is independent of $\eps$). The time of existence $T_\ast$ will be selected small relative to $\eps$ and in particular we restrict ourselves to $T_\ast \le \eps$, so that the quotient $t/\eps$ is always bounded by $1$. 

It is also useful to denote 
\begin{align} \label{c:theta:xi}
c_{\theta,\xi}(t) := I_\infty[1-\theta_\xi](t) = \int_0^\infty (1-\theta_\xi(y,t)) dy = \int_0^\infty e^{- \frac{y^2 \brak{\xi}^2}{1+t/\eps}} dy  = \frac{ \sqrt{\pi (1+t/\eps)}}{2 \brak{\xi}} \,. 
\end{align}
With $\theta_\xi$ as defined by \eqref{eq:theta:def}, we  identify the function $ v_0(\bar w, A)$ from \eqref{eq:v0:def} as 
\begin{align}
 (v_0(\bar w, A))_\xi(t) &= - i\xi \int_0^\infty \left(   \bar w_\xi(y,t) -  A_\xi(t) (1-\theta_\xi(y,t)) \right) \ud y \notag\\
 &= - i\xi I_{\infty}[\bar w_\xi] (x,t) + i\xi c_{\theta,\xi}(t) A_\xi(t)  
 \,. 
 \label{eq:v0:new}
\end{align}
With this notation, we return to  \eqref{eq.A} which in view of \eqref{eq:v0:new} becomes
\begin{align} 
\p_t A_\xi + i\xi c_{\theta,\xi} A_\xi  - i\xi I_{\infty}[\bar w_\xi] + i \xi |\xi| A_\xi = - i \int_{\RR} A_\eta (\xi-\eta) A_{\xi-\eta} \ud \eta \, .
\label{eq:A:evo}
\end{align}
We notice that $A$ enters the evolution equation for $\bar w$ only through the coefficients of $\Bcal, \Lcal$, and $\Mcal$, whereas $\partial_x \bar w$ enters the evolution equation for $A$ only thought its vertical mean encoded in $v_0(\bar w,A)$. 

Lastly, using that $A$ obeys the Benjamin-Ono equation \eqref{eq:A:evo}, and using that $c_{\theta,\xi} = I_\infty[1-\theta_\xi]$, we may rewrite the forcing term $\Bcal_\xi(\bar w,A)$ given in \eqref{eq:B:def} as  
\begin{align}
\Bcal_\xi (\bar{w},A) 
& = A_\xi (\partial_t - \partial_{yy})\theta_\xi + \left(\theta_\xi -1\right) \partial_t A_\xi   + i\xi \left( I_\infty[\bar w_\xi] - I_y[\bar w_\xi] \right) \notag\\
&\qquad + i\xi   A_\xi \bigl( y (\theta_\xi-1)  - ( I_\infty[1-\theta_\xi] - I_y[1-\theta_\xi] )\bigr)   \notag\\
&\qquad + i \int_{\RR} \bigl(   (\xi-\eta) A_\eta  A_{\xi-\eta} \left( \theta_\eta \theta_{\xi-\eta} -1\right) - \eta A_\eta A_{\xi-\eta} I_y[\theta_\eta] \partial_y \theta_{\xi-\eta} \bigr)
  \label{eq:B:def:*}
\end{align}
Because of our choice of $\theta_\xi$, every single term in  $\Bcal_\xi(\bar v,A)$ decays to $0$ as $y\to \infty$. 
Throughout the rest of the paper, we use the formulation \eqref{eq:B:def:*} of the $\Bcal_\xi$ term (instead of \eqref{eq:B:def}).

\section{Energy estimates and the proof of the Main Theorem}
\label{sec:energy}

In this section we give the energy estimates which prove Theorem~\ref{thm:main}, under the assumption that the nonlinear terms may be bounded suitably (cf.~Lemma~\ref{lem:main}). These terms are then estimated in Section~\ref{sec:main:lemma}.

\subsection{Energy inequality for $A$}
In view of \eqref{eq:ODE:tilde:XY} we  take product of equation \eqref{eq:A:evo} with the complex conjugate $\overline{{A}_\xi}$, and integrate in $\xi$ against $e^{2\tau |\xi|} \langle \xi \rangle^{2r} \langle \xi \rangle$ to obtain
\begin{align}
\brak{\partial_t A, A}_{\tilde{\tau,r}} 
&= - \int_{\RR} e^{2\tau|\xi|} \brak{\xi}^{2r} \left(i \xi (c_{\theta, \xi} +|\xi|) |A_\xi|^2   - i \xi I_{\infty}[\bar{w}_\xi] \overline{A_\xi} + i \overline{A_\xi} \int_{\mathbb{R}} A_\eta(\xi - \eta) A_{\xi - \eta} \ud \eta \right) \ud \xi
\notag\\
&= T_{\mathcal A,1} - T_{\mathcal A,2}
\label{eq:t:A:energy:*}
\end{align}
where we have defined
\begin{subequations}
\label{eq:T:A12}
\begin{align}
T_{\mathcal A,1} &= \brak{I_{\infty}[\p_x \bar w], A}_{\tilde{\tau,r}} =  \int_{\RR}  i \xi I_\infty[\bar{w}_\xi] \overline{A_\xi} e^{2 \tau |\xi|} \langle \xi \rangle^{2r}  \ud \xi 
\label{eq:T:A12:a}\\
T_{\mathcal A,2} &= \brak{A \partial_x A, A}_{\tilde{\tau,r}} =  \int_{\RR} \int_{\RR} i (\xi - \eta)   A_\eta A_{\xi - \eta}  \overline{A_\xi} e^{2 \tau |\xi|} \langle \xi \rangle^{2r} \ud \eta \ud \xi
\label{eq:T:A12:b}
\, .
\end{align}
\end{subequations}
In \eqref{eq:dt:A:energy} we have used that $A$ is real-valued, so that $A_{-\xi} = \overline{A_{\xi}}$, and that $c_{\theta, \xi} = c_{\theta,-\xi} \in \mathbb{R}$ (cf.~\eqref{c:theta:xi}). Combining \eqref{eq:ODE:tilde:XY} with \eqref{eq:t:A:energy:*} we arrive  at
\begin{align}
 \frac{d}{2 dt} \norm{A}_{\tilde{\tau,r}}^2  +  (-\dot{\tau}) \norm{|\partial_x|^{1/2} A}_{\tilde{\tau,r}}^2 & \leq \abs{T_{\mathcal A,1}} +\abs{T_{\mathcal A,2}}
\label{eq:dt:A:energy}
\end{align}
which is the desired energy inequality for the analytic norm of $A$. The terms on the right side of \eqref{eq:dt:A:energy} are estimated in Lemma~\ref{lem:main}, bounds \eqref{eq:Shaq:A}.

\subsection{Energy inequality for $\bar{w}$}
In view of \eqref{eq:ODE:XY} we need to compute $\brak{\partial_t \bar w + \bar w (\partial_t \log) \rho,\bar w}_{\tau,r}$. Note that by the definition \eqref{eq:rho:def} we have 
\begin{align}
\partial_t (\log \rho) = -\frac{y^2}{8 \eps (1+t/\eps)^2} 
\label{eq:LeBron:0}
\end{align}
and thus we obtain the damping weight-gaining term
\begin{align}
\brak{\bar w \partial_t (\log \rho),\bar w}_{\tau,r} = -\frac{1}{8 \eps (1+t/\eps)^2} \norm{y \bar w}_{\tau,r}^2
\, .
\label{eq:LeBron:1}
\end{align}
In order to compute $\brak{\partial_t \bar w,\bar w}_{\tau,r}$, we multiply  \eqref{eq.hom.w.a} with $\rho^2 \overline{\bar w_\xi} e^{2\tau|\xi|} \brak{\xi}^{2r}$ and integrate over $(\xi,y) \in\RR\times [0,\infty)$ to obtain
\begin{align}
\brak{\partial_t \bar w,\bar w}_{\tau,r}
&= - \norm{\partial_y \bar w}_{\tau,r}^2 - \frac{1}{2(1+t/\eps)} \int_{\RR} \int_0^\infty \partial_y \bar w_\xi y \overline{\bar w_\xi} \rho^2 e^{2\tau|\xi|} \brak{\xi}^{2r} \ud \eta \ud \xi \notag\\
&\qquad - i \int_{\RR} \int_0^\infty \xi \abs{\bar w_\xi}^2  \rho^2 e^{2\tau|\xi|} \brak{\xi}^{2r} \ud \eta \ud \xi - T_{\Ncal} - T_{\Lcal} - T_{\Mcal} - T_{\Bcal} \notag\\
&\leq -\frac 12 \norm{\partial_y \bar w}_{\tau,r}^2 +\frac{1}{8(1+t/\eps)^2} \norm{y \bar w}_{\tau,r}^2 + \abs{T_{\Ncal}} + \abs{T_{\Lcal}} + \abs{T_{\Mcal}} + \abs{T_{\Bcal}} \,
\label{eq:LeBron:2}
\end{align}
where we have used  that by oddness in $\xi$ we have
\begin{align*}
i \int_{\RR} \int_0^\infty \xi \abs{\bar w_\xi}^2  \rho^2 e^{2\tau|\xi|} \brak{\xi}^{2r} \ud \eta \ud \xi
= 0
\end{align*}
and we have denoted
\begin{subequations}
\label{eq:T:cal:1}
\begin{align}
T_{\Ncal} &= \int_{\RR} \int_0^\infty \Ncal_\xi (\bar w, \bar w) \overline{\bar w_\xi} \rho^2 e^{2\tau|\xi|} \brak{\xi}^{2r} \ud y \ud \xi
\\
T_{\Lcal} &= \int_{\RR} \int_0^\infty \Lcal_\xi(\bar w,A)  \overline{\bar w_\xi} \rho^2 e^{2\tau|\xi|} \brak{\xi}^{2r} \ud y \ud \xi
\\
T_{\Mcal} &= \int_{\RR} \int_0^\infty \Mcal_\xi(\bar w,A)  \overline{\bar w_\xi} \rho^2 e^{2\tau|\xi|} \brak{\xi}^{2r} \ud y \ud \xi
\\
T_{\Bcal} &= \int_{\RR} \int_0^\infty \Bcal_\xi(\bar w,A)  \overline{\bar w_\xi} \rho^2 e^{2\tau|\xi|} \brak{\xi}^{2r} \ud y \ud \xi
\end{align}
\end{subequations}
with $\Ncal_\xi, \Lcal_\xi, \Mcal_\xi, \Bcal_\xi$, as defined in \eqref{eq:NLMB:def}.
Combining  \eqref{eq:ODE:XY} with \eqref{eq:LeBron:1}--\eqref{eq:LeBron:2} we arrive at  
\begin{align}
\frac{d}{2 dt} \norm{\bar w}_{\tau,r}^2 +  (-\dot{\tau}) \norm{|\partial_x|^{1/2} \bar w}_{\tau,r}^2 + \frac 12 \norm{\partial_y \bar w}_{\tau,r}^2  + \frac{1-\eps}{8 \eps (1+t/\eps)^2} \norm{y \bar w}_{\tau,r}^2
\leq  \abs{T_{\Ncal}} +  \abs{T_{\Lcal}} +  \abs{T_{\Mcal}}  +  \abs{T_{\Bcal}}  
\label{eq:LeBron:3}
\end{align}
which is the desired energy inequality for the analytic norm of $\bar w$. The  four error terms on the right side of \eqref{eq:LeBron:3} are estimated in Lemma~\ref{lem:main}, bounds~\eqref{eq:Shaq:w}.

\subsection{Energy inequality for $\p_y \bar{w}$}
In order to overcome a loss of $y$ weight in the term $T_{\Mcal}$, we need to also consider the evolution of the normalized vorticity $\p_y \bar w$. We apply $\p_y$ to \eqref{eq.hom.w.a} to obtain 
\begin{align} \label{eq:diff:y}
\p_t \p_y \bar{w}_\xi - \p_y^2 \p_y \bar{w}_\xi + i \xi \bar{w}_\xi + i y \xi \p_y \bar{w}_\xi 
= - \left(\p_y  \mathcal{N}_\xi +  \p_y \mathcal{L}_\xi + \p_y  \mathcal{M}_{\xi} +  \p_y \mathcal{B}_\xi \right). 
\end{align}
Note that some of the terms on the right side of \eqref{eq:diff:y} have cancelations in them.
Using that $I_y[\partial_y \bar{w}_\xi] = \bar{w}_\xi = \partial_y I_y[\bar w_\xi]$, and upon noting symmetries $\eta \leftrightarrow \xi-\eta$ in the below integrals,
we may rewrite
\begin{subequations}
\label{eq:dy:nonlinear:terms}
\begin{align}
\partial_y \Ncal_\xi(\bar w,\bar w) 
&= \Ncal_\xi(\bar w, \p_y \bar w) = i \int_{\RR} \bigr(\bar w_\eta (\xi-\eta) \p_y \bar w_{\xi-\eta}   - \eta I_y[\bar w_\eta] \partial_{yy} \bar w_{\xi-\eta}  \bigl)  \ud\eta  
\label{eq:dy:nonlinear:terms:a}
\\
\partial_y \Lcal_\xi(\bar w,\bar w) 
&=  i \int_{\RR}\bigl(\p_y \bar w_\eta (\xi-\eta) A_{\xi-\eta} \theta_{\xi-\eta} + \bar w_{\eta}  (\xi-\eta)A_{\xi-\eta}\p_y \theta_{\xi-\eta}   \notag\\
&\qquad \qquad \qquad + A_\eta \theta_\eta (\xi-\eta) \p_y \bar w_{\xi-\eta} - \eta I_y[\bar w_\eta] A_{\xi-\eta} \partial_{yy} \theta_{\xi-\eta} \bigr) \ud\eta 
\label{eq:dy:nonlinear:terms:b}
\\
\partial_y  \Mcal_\xi(\bar w,A) 
&= -i \int_{\RR} \bigl( \eta A_\eta  \theta_\eta  \p_y \bar w_{\xi-\eta} + \eta A_\eta I_y[\theta_\eta] \p_{yy} \bar w_{\xi-\eta} \bigr) \ud \eta 
\label{eq:dy:nonlinear:terms:c}
\\
\partial_y  \Bcal_\xi(\bar w,A) 
&= A_\xi (\partial_t - \partial_{yy}) \partial_y \theta_\xi + \partial_y \theta_\xi \partial_t A_\xi- i\xi \bar w_\xi + i \xi A_\xi y \partial_y \theta_\xi
\notag\\
&\qquad \qquad \qquad + i \int_{\RR} \bigl(   (\xi-\eta) A_\eta  A_{\xi-\eta}  \theta_\eta \p_y \theta_{\xi-\eta}   - \eta A_\eta A_{\xi-\eta} I_y[\theta_\eta] \partial_{yy} \theta_{\xi-\eta} \bigr)
\label{eq:dy:nonlinear:terms:d}
\, .
\end{align}
\end{subequations}
It turns out that we only need information on the vorticity $\partial_y \bar w$ away from $\partial \HH = \{y= 0\}$, and for this purpose we introduce a cut-off function, $\chi = \chi(y)$, such that $0 \leq \chi' \leq 1$, satisfying 
\begin{align}
\chi(y) = 
\begin{cases} 
0, \quad  \text{ on } y \in [0, 1) \\ 
1, \quad \text{ on } y \ge 6  \, .
\end{cases}
\label{eq:chi:def}
\end{align}
Note that $\chi$ is independent of time.
Our goal is to estimate $\norm{\chi \, \partial_y \bar w}_{\tau,r-1/2}$. The shift in Sobolev regularity of for the vorticity norm, i.e. the change $r\mapsto r-1/2$, is essential for the energy estimate to close.

Using \eqref{eq:ODE:XY} with $f = \chi \, \partial_y \bar w$, property \eqref{eq:LeBron:0} of the weight $\rho$, and the evolution equation \eqref{eq:diff:y} we obtain
\begin{align}
&
\frac{d}{2dt}  \norm{\chi   \p_y \bar w}_{\tau,r-1/2}^2 + (-\dot \tau) \norm{\chi |\partial_x|^{1/2} \p_y \bar w}_{\tau,r-1/2}^2
\notag\\
&= \brak{\chi \, \p_t \p_y \bar w, \chi \, \p_y \bar w}_{\tau,r-1/2} + \brak{\chi \, \p_y \bar w \p_t (\log \rho), \chi \, \p_y \bar w}_{\tau,r-1/2}
\notag\\
&= -\frac{1}{8\eps (1+t/\eps)^2} \norm{y \chi \, \p_y \bar w}_{\tau,r-1/2}^2 
- \norm{\chi \p_{yy} \bar w}_{\tau,r-1/2}^2 - \int_{\RR} \int_0^\infty \p_{yy} \bar w_\xi \p_y(\chi^2 \rho^2) \overline{\p_y \bar w_\xi} \brak{\xi}^{2r-1} e^{2\tau|\xi|} \ud y \ud \xi
\notag\\
&\qquad  - i \int_{\RR} \int_0^\infty \xi \bar w_\xi  \chi^2 \rho^2  \overline{\p_y \bar w_\xi} \brak{\xi}^{2r-1} e^{2\tau|\xi|} \ud y \ud \xi - T_{\p_y \Ncal} - T_{\p_y \Lcal} - T_{\p_y \Mcal} - T_{\p_y \Bcal}  \,.
\label{eq:Kobe:0}
\end{align}
Here we have used the cancellation property
\begin{align*}
i \int_{\RR} \int_0^\infty y \xi \abs{\p_y \bar w_\xi}^2 \chi^2 \rho^2 \brak{\xi}^{2r-1} e^{2\tau |\xi|} \ud y \ud \xi = 0
\end{align*}
and have denoted
\begin{subequations}
\label{eq:Kobe:1}
\begin{align}
T_{\p_y \Ncal} &= \int_{\RR} \int_0^\infty \p_y \Ncal_\xi (\bar w, \bar w) \overline{\p_y \bar w_\xi} \chi^2 \rho^2 e^{2\tau|\xi|} \brak{\xi}^{2r-1} \ud y \ud \xi
\label{eq:Kobe:1:a}
\\
T_{\p_y \Lcal} &= \int_{\RR} \int_0^\infty \p_y \Lcal_\xi(\bar w,A)  \overline{\p_y \bar w_\xi} \chi^2  \rho^2 e^{2\tau|\xi|} \brak{\xi}^{2r-1} \ud y \ud \xi
\label{eq:Kobe:1:b}
\\
T_{\p_y \Mcal} &= \int_{\RR} \int_0^\infty \p_y \Mcal_\xi(\bar w,A)  \overline{\p_y \bar w_\xi} \chi^2  \rho^2 e^{2\tau|\xi|} \brak{\xi}^{2r-1} \ud y \ud \xi
\label{eq:Kobe:1:c}
\\
T_{\p_y \Bcal} &= \int_{\RR} \int_0^\infty \p_y \Bcal_\xi(\bar w,A)  \overline{\p_y \bar w_\xi} \chi^2  \rho^2 e^{2\tau|\xi|} \brak{\xi}^{2r-1} \ud y \ud \xi
\label{eq:Kobe:1:d}
\end{align}
\end{subequations}
with $\p_y \Ncal_\xi, \p_y \Lcal_\xi, \p_y \Mcal_\xi, \p_y \Bcal_\xi$, as defined in \eqref{eq:dy:nonlinear:terms}.
From \eqref{eq:Kobe:0}, the Cauchy-Schwarz inequality and the definitions of $\chi$ and $\rho$ we obtain
\begin{align}
&
\frac{d}{2dt}  \norm{\chi   \p_y \bar w}_{\tau,r-1/2}^2 + (-\dot \tau) \norm{\chi |\partial_x|^{1/2} \p_y \bar w}_{\tau,r-1/2}^2 + \frac{1-\eps}{8\eps (1+t/\eps)^2} \norm{y \chi \, \p_y \bar w}_{\tau,r-1/2}^2  + \frac 14\norm{\chi \p_{yy} \bar w}_{\tau,r-1/2}^2
\notag\\
&\qquad \leq  \norm{\p_y \bar w}_{\tau,r}^2 +  \norm{ \bar w}_{\tau,r} \norm{ \p_y \bar w}_{\tau,r}
+ \abs{T_{\p_y \Ncal}} +\abs{T_{\p_y \Lcal}} +\abs{T_{\p_y \Mcal}} +\abs{T_{\p_y \Bcal}}  \,.
\label{eq:Kobe:2}
\end{align}
Here we have used that $\brak{\xi}\geq 1$. The remaining four error terms on the right side of \eqref{eq:Kobe:2} are bounded in Lemma~\ref{lem:main}, estimate~\eqref{eq:Shaq:dy:w}.

\subsection{Nonlinear estimates}
The following lemma summarizes the available estimates for the error terms in \eqref{eq:dt:A:energy}, \eqref{eq:LeBron:3}, and \eqref{eq:Kobe:2}.
\begin{lemma}[\bf Main Nonlinear Lemma] \label{lem:main}
Assume that $r>2$ and that $t \le \eps$. For the error terms in the $A$ energy estimate~\eqref{eq:dt:A:energy} we have
\begin{subequations}
\label{eq:Shaq:A}
\begin{align}
\abs{T_{\mathcal A,1}} &\les  \norm{|\p_x|^{1/2} \bar w}_{\tau,r}  \norm{|\p_x|^{1/2} A}_{\tilde{\tau,r}}
\\
\abs{T_{\mathcal A,2}} &\les \norm{|\p_x|^{1/2} A}_{\tilde{\tau,r}}^2 \norm{A}_{\tilde{\tau,r}}  + \norm{|\p_x|^{1/2} A}_{\tilde{\tau,r}} \norm{A}_{\tilde{\tau,r}}^2
\end{align}
\end{subequations}
for the error terms in the $\bar w$ energy estimate~\eqref{eq:LeBron:3} it holds that
\begin{subequations}
\label{eq:Shaq:w}
\begin{align}
\abs{T_{\mathcal N}} &\les \norm{|\p_x|^{1/2} \bar w}_{\tau,r} \norm{\brak{\p_x}^{1/2} \bar w}_{\tau,r}    \norm{\p_y \bar w}_{\tau,r}  
\label{eq:Barkley:*}
\\
\abs{T_{\mathcal L}} &\les 
\norm{|\p_x|^{1/2} \bar w}_{\tau,r} \norm{\bar w}_{\tau,r} \norm{|\p_x|^{1/2} A}_{\tilde{\tau,r}} + 
\left(\norm{\bar w}_{\tau,r}^2 + \norm{|\p_x|^{1/2} \bar w}_{\tau,r}^2\right) \norm{A}_{\tilde{\tau,r}} 
\label{eq:Jordan:*}
\\
\abs{T_{\mathcal M}} &\les 
 \norm{\brak{\p_x}^{1/2} \bar w}_{\tau,r}  \norm{\brak{\p_x}^{1/2} A}_{\tilde{\tau,r}}  \left( \norm{\p_y \bar w}_{\tau,r} +  \norm{y \chi \p_y \bar w}_{\tau,r-1/2}   \right)
\label{eq:Jordan:**}
\\
\abs{T_{\mathcal B}} &\les 
\frac{1}{\eps} \norm{\bar w}_{\tau,r} \norm{A}_{\tilde{\tau,r}} + 
  \norm{\brak{\p_x}^{1/2} A}_{\tau,r} \norm{\p_y \bar w}_{\tau,r}  +  \norm{\abs{\p_x}^{1/2} \bar w}_{\tau,r}^2
\notag\\
&\quad +
 \norm{|\p_x|^{1/2} \bar w}_{\tau,r} \norm{\bar w}_{\tau,r}  +
  \norm{\brak{\p_x}^{1/2} \bar w}_{\tau,r}  \norm{|\p_x|^{1/2} A}_{\tilde{\tau,r}} \norm{A}_{\tilde{\tau,r}}
\label{eq:Magic:*}
\end{align}
\end{subequations}
while for the error terms in the energy estimate~\eqref{eq:Kobe:2} for $\p_y \bar w$ the estimates
\begin{subequations}
\label{eq:Shaq:dy:w}
\begin{align} 
\abs{T_{\p_y \mathcal N}} &\les 
 \norm{\brak{\p_x}^{1/2} \chi \p_y \bar w}_{\tau,r-1/2}^2 \norm{\p_y \bar w}_{\tau,r} +  \norm{\chi \p_{yy} \bar w}_{\tau,r-1/2}  \norm{\chi \p_{y} \bar w}_{\tau,r-1/2}   \norm{\brak{\p_x}^{1/2} \bar w}_{\tau,r}
\label{eq:Barkley:@}
\\
\abs{T_{\p_y  \mathcal L}} &\les 
\norm{\chi \p_y \bar w}_{\tau,r-1/2}^2 \norm{|\p_x|^{1/2} A}_{\tilde{\tau,r}}
+  \norm{\chi \p_y \bar w}_{\tau,r-1/2} \norm{\brak{\p_x}^{1/2} \bar w}_{\tau,r} \norm{A}_{\tilde{\tau,r}}
\notag\\
&\quad + \norm{\chi |\p_x|^{1/2} \p_y \bar w}_{\tau,r-1/2}  \norm{\chi \brak{\p_x}^{1/2} \p_y \bar w}_{\tau,r-1/2} \norm{A}_{\tilde{\tau,r}}
\label{eq:Jordan:@}
\\
\abs{T_{\p_y  \mathcal M}} &\les \norm{\chi \p_y \bar w}_{\tau,r-1/2}^2 \norm{\abs{\p_x}^{1/2} A}_{\tilde{\tau,r}} + \norm{y \chi \p_y \bar w}_{\tau,r-1/2} \norm{\chi \p_{yy} \bar w}_{\tau,r-1/2} \norm{\abs{\p_x}^{1/2} A}_{\tilde{\tau,r}}
\label{eq:Jordan:@:*}
\\ \n
\abs{T_{\p_y \mathcal B}} &\les 
\frac{1}{\eps}   \norm{A}_{\tilde{\tau,r}} \norm{\chi \p_y\bar w}_{\tau,r-1/2}
+  \norm{\brak{\p_x}^{1/2} \bar w}_{\tau,r} \norm{\chi \p_y\bar w}_{\tau,r-1/2}  +  \norm{\brak{\p_x}^{1/2} A}_{\tilde{\tau,r}} \norm{\chi \p_y \bar w}_{\tau,r-1/2} 
\\  &+  \norm{A}_{\tilde{\tau,r}}^2 \norm{\chi \p_y \bar w}_{\tau,r-1/2} + \norm{\abs{\p_x}^{1/2} A}_{\tilde{\tau,r}}  \norm{A}_{\tilde{\tau,r}}  \norm{\chi \p_y \bar w}_{\tau,r-1/2}
\label{eq:Magic:@}
\end{align}
\end{subequations}
hold. The implicit constants in the above estimates are independent of $\tau$, $t$, and $\eps$ (they depend solely on $r$). 
\end{lemma}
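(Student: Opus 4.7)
The plan is to treat each of the ten error terms as a Fourier-side convolution integral in the tangential variable (with an additional integral in $y$ for the $\bar w$-related terms), and then reduce to the target norms by three standard ingredients: (i) the submultiplicativity $e^{\tau|\xi|}\leq e^{\tau|\eta|}e^{\tau|\xi-\eta|}$; (ii) the splitting $\brak{\xi}^r \les \brak{\eta}^r + \brak{\xi-\eta}^r$ combined with the embedding $H^r(\RR)\hookrightarrow L^\infty(\RR)$, valid since $r>2$; and (iii) a dictionary of pointwise and weighted $L^2_y$ bounds on $\theta_\xi$, $\p_y\theta_\xi$, $\p_{yy}\theta_\xi$, $I_y[\theta_\xi]$, $\p_t\theta_\xi$ and $1-\theta_\xi$, all of which follow by direct Gaussian computation from the explicit formula \eqref{eq:theta:def}. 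A recurring technicality is the control of Biot-Savart type integrals $I_y[\bar w_\eta]$, which I would handle by Hardy's inequality $\norm{I_y[f]/y}_{L^2_y}\les \norm{f}_{L^2_y}$ after noting that $\rho(y)/y$ is still super-exponentially decaying up to a harmless constant.

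For the pure Prandtl nonlinearity $T_{\Ncal}$ and the mixed couplings $T_{\Lcal},T_{\Mcal}$, I would place the low-frequency factor in $L^\infty_x$ via Sobolev embedding and keep the remaining factor in $L^2_x L^2_y$, producing the required $\p_y\bar w$ whenever the term $I_y[\bar w_\eta]\p_y\bar w_{\xi-\eta}$ appears. The apparent loss of one $y$-weight in $T_{\Mcal}$, which arises because $I_y[\theta_\eta]$ grows linearly at infinity, is absorbed by the weighted quantity $\norm{y\chi\p_y\bar w}_{\tau,r-1/2}$ from $H_\tau$; this is precisely why that piece was included in \eqref{norm:H:tau}, and it produces the second summand in \eqref{eq:Jordan:**}. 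The Benjamin-Ono error terms $T_{\mathcal A,1}$ and $T_{\mathcal A,2}$ follow respectively from Cauchy-Schwarz after splitting $|\xi|^{1/2}$ symmetrically between $\bar w$ and $A$, and from the skew-adjointness cancellation \eqref{eq:cancellation} implemented in the analytic norm through the inequality $|\xi|\leq |\eta|+|\xi-\eta|$.

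The main obstacle is $T_{\Bcal}$. Reading $\Bcal_\xi$ in the form \eqref{eq:B:def:*} and substituting for $\p_t A_\xi$ from the Benjamin-Ono evolution \eqref{eq:A:evo}, one encounters a contribution proportional to $(1-\theta_\xi)\,i\xi|\xi|A_\xi$ paired against $\bar w_\xi$, which at first sight loses two $x$-derivatives. My plan is to promote the cancellation \eqref{eq:cancellation} into the interior of the lower deck by using $\bar w_\xi(0,t)=0$ to write $\bar w_\xi(y)=\int_0^y \p_y\bar w_\xi(z)\,dz$ and then exchange the order of integration, reducing the $y$-integral to
\begin{align*}
\int_0^\infty K(z,\xi)\,\overline{\p_y\bar w_\xi}(z)\,dz, \qquad K(z,\xi) = \int_z^\infty (1-\theta_\xi)(y,t)\,\rho^2(y,t)\,dy.
\end{align*}
A direct Gaussian computation using \eqref{eq:theta:def} and \eqref{eq:rho:def} gives the sharp weighted bound $\norm{K(\cdot,\xi)\rho^{-1}}_{L^2_z}\les \brak{\xi}^{-3/2}$, and after Cauchy-Schwarz in $(\xi,z)$ this reduces the apparent two-derivative loss to the clean pair $\norm{\brak{\p_x}^{1/2}A}_{\tilde{\tau,r}}\norm{\p_y\bar w}_{\tau,r}$ appearing in \eqref{eq:Magic:*}. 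The explicit $1/\eps$ summand in \eqref{eq:Magic:*} is forced by the $\p_t\theta_\xi$ and $(\p_t-\p_{yy})\theta_\xi$ summands in \eqref{eq:B:def:*}, since differentiating \eqref{eq:theta:def} in $t$ produces a factor of $1/\eps$; the quadratic-in-$A$ pieces of $\Bcal_\xi$ are handled just as $T_{\mathcal A,2}$.

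For the vorticity bounds \eqref{eq:Shaq:dy:w} I would repeat the above scheme starting from the expansions \eqref{eq:dy:nonlinear:terms}. The cutoff $\chi$ vanishes near $y=0$, so integration by parts in $y$ against factors of $\p_{yy}\bar w_{\xi-\eta}$ appearing in \eqref{eq:dy:nonlinear:terms:a}--\eqref{eq:dy:nonlinear:terms:c} produces no boundary contributions, and the commutator $[\p_y,\chi^2\rho^2]$ only creates lower-order exponentially decaying terms that are easily absorbed. The reduction of regularity from $r$ to $r-1/2$ in the vorticity norm is essential: it is exactly what lets the convolution estimates accommodate the extra $\brak{\p_x}^{1/2}$ factors on the right sides of \eqref{eq:Jordan:@}--\eqref{eq:Magic:@} without any further loss. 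The $y$-weight loss in $T_{\p_y\Mcal}$ is compensated again by $\norm{y\chi\p_y\bar w}_{\tau,r-1/2}$, and the two-derivative loss inside $T_{\p_y\Bcal}$ is handled by the same integration-by-parts-in-$y$ trick used for $T_{\Bcal}$, now with $\chi^2$ inside the kernel $K$. Beyond this bookkeeping I expect no further conceptual obstacles.
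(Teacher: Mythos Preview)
Your plan is essentially the paper's approach: Fourier-side trilinear/bilinear estimates using $e^{\tau|\xi|}\leq e^{\tau|\eta|}e^{\tau|\xi-\eta|}$, the splitting $\brak{\xi}^r\les\brak{\eta}^r+\brak{\xi-\eta}^r$, and explicit Gaussian bounds on $\theta_\xi$ and its derivatives.  In particular, your Fubini/kernel reduction for the critical $(\theta_\xi-1)\,i\xi|\xi|A_\xi$ contribution in $T_{\Bcal}$ is equivalent to what the paper does via the one-dimensional Hardy inequality $\norm{\bar w_\xi/y}_{L^2_y}\les\norm{\p_y\bar w_\xi}_{L^2_y}$ together with $\brak{\xi}^{3/2}\norm{y(\theta_\xi-1)\rho^2}_{L^2_y}\les 1$; your kernel bound $\norm{K(\cdot,\xi)\rho^{-1}}_{L^2_z}\les\brak{\xi}^{-3/2}$ is correct and encodes the same gain.

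Two small slips are worth flagging.  First, your remark that ``$\rho(y)/y$ is still super-exponentially decaying'' is backwards: $\rho$ is the \emph{growing} Gaussian weight.  Consequently the Hardy route $\norm{I_y[f]/y}_{L^2_y}\les\norm{f}_{L^2_y}$ is not the clean tool for the $I_y[\bar w_\eta]$ factors in $T_{\Ncal}^{(2)}$ and $T_{\Lcal}^{(3)}$; the paper instead uses the pointwise bound $\norm{I_y[f]}_{L^\infty_y}\les\norm{\rho f}_{L^2_y}$ (which follows from $\rho^{-1}\in L^2_y$), and this is what places the low-frequency factor in $L^\infty_y$ without introducing an unwanted $y$-weight.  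Second, for $T_{\p_y\Bcal}$ no integration-by-parts trick is needed: on the support of $\chi$ one has $\brak{\xi}^j\norm{\chi\rho^2\p_y\theta_\xi}_{L^\infty_y}\les 1$ for every fixed $j$, so the apparent $|\xi|^2$ from $\p_t A_\xi$ is absorbed directly by the lift, and the paper simply uses this (see \eqref{eq:Celtics:no:good}).  Neither point is fatal, but fixing them streamlines your argument to match the paper's.
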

The proof of Lemma~\ref{lem:main} is given in Section~\ref{sec:main:lemma} below. Assuming that this lemma is established, we continue with the proof of the main theorem. Before doing so, we summarize the bounds proven in Lemma~\ref{lem:main} using the total norms defined in \eqref{eq:cluster:fuck} above. Estimate \eqref{eq:Shaq:A} shows that
\begin{align}
\left( \abs{T_{\mathcal A,1}} +  \abs{T_{\mathcal A,2}} \right) \les  \left( 1 +   \norm{(\bar w,A)}_{X_\tau}\right) \norm{(\bar w,A)}_{Y_\tau}^2 +   \norm{(\bar w,A)}_{X_\tau}^3.
\label{eq:A:error:total}
\end{align}
The bounds \eqref{eq:Shaq:w} and an $\eps$-Young inequality for the second term in \eqref{eq:Magic:*} yields 
\begin{align}
&\abs{T_{\Ncal}}+\abs{T_{\Lcal}}+\abs{T_{\Mcal}}+\abs{T_{\Bcal}}\notag\\
&\quad \les \norm{(\bar w,A)}_{Y_\tau}^2  
  \left( \eps^{-1}  + \norm{\bar w}_{Z_\tau} +  \norm{(\bar w,A)}_{X_\tau} + \delta   \norm{\bar w}_{H_\tau}\right) 
\notag\\
&\qquad + \norm{(\bar w,A)}_{X_\tau}^2  \left(\eps^{-1} +  \norm{\bar w}_{Z_\tau} + \norm{(\bar w,A)}_{X_\tau} + \delta    \norm{\bar w}_{H_\tau} \right)
  + \eps \norm{\bar w}_{Z_\tau}^2 
\label{eq:w:error:total}
\end{align}
while the inequality \eqref{eq:Shaq:dy:w} implies 
\begin{align}
&\delta^{-2}\left(\abs{T_{\p_y \Ncal}}+\abs{T_{\p_y \Lcal}}+\abs{T_{\p_y \Mcal}}+\abs{T_{\p_y \Bcal}}\right)\notag\\
&\quad \les \norm{(\bar w,A)}_{Y_\tau}^2   \left( \delta^{-1}+    \norm{\bar w}_{Z_\tau} + (1+  \delta^{-1} ) \norm{(\bar w,A)}_{X_\tau}\right) +  \norm{\bar w}_{Z_\tau} \norm{\bar w}_{H_\tau} \norm{(\bar w,A)}_{Y_\tau}
\notag\\
&\qquad + \norm{(\bar w,A)}_{X_\tau}^2  \left( \eps^{-1} \delta^{-1} + \delta^{-1} +   \norm{\bar w}_{Z_\tau}+ (1+  \delta^{-1}  ) \norm{(\bar w,A)}_{X_\tau} \right)  
\,.
\label{eq:dw:error:total}
\end{align}
The implicit constants in \eqref{eq:A:error:total}, \eqref{eq:w:error:total}, and \eqref{eq:dw:error:total} only depend on $r$, since we have assumed $t , \eps \leq 1$.

\subsection{Proof of the Main Theorem}
In order to prove Theorem~\ref{thm:main} we couple together the energy estimates  \eqref{eq:dt:A:energy}, \eqref{eq:LeBron:3}, and \eqref{eq:Kobe:2} multiplied by the small factor $\delta^{-2}$, together with the error estimates \eqref{eq:A:error:total}, \eqref{eq:w:error:total}, and \eqref{eq:dw:error:total}, to obtain, for universal constants $C_0, \tilde{C}_0$, 
\begin{align}
&\frac{d}{2dt} \norm{(\bar w,A)}_{X_\tau}^2 + (-\dot \tau) \norm{(\bar w,A)}_{Y_\tau}^2 + \frac 18 \norm{\bar w}_{Z_\tau}^2 + \frac{1-\eps}{8 \eps (1+t/\eps)^2} \norm{\bar w}_{H_\tau}^2 
\notag\\
&\le C_0 \Big(  \delta^{-2}  \norm{\bar w}_{Z_\tau}^2   +   \norm{\bar w}_{Z_\tau} \norm{\bar w}_{H_\tau} \norm{(\bar w,A)}_{Y_\tau}
\notag\\
&\quad  + \norm{(\bar w,A)}_{Y_\tau}^2  \left( (\eps^{-1}  +  \delta^{-1}) +    \norm{\bar w}_{Z_\tau} +(1+ \delta^{-1}) \norm{(\bar w,A)}_{X_\tau}+ \delta   \norm{\bar w}_{H_\tau}  \right)
\notag\\ \n
&\quad  + \norm{(\bar w,A)}_{X_\tau}^2 \Big( ( \eps^{-1} +  \eps^{-1} \delta^{-1} +   \delta^{-1})+   \norm{\bar w}_{Z_\tau}+ (1+ \delta^{-1}) \norm{(\bar w,A)}_{X_\tau} + \delta    \norm{\bar w}_{H_\tau}    \Big) \\ \n
& \le \tilde{C}_0 \delta^{-2}  \norm{\bar w}_{Z_\tau}^2   + \frac{1}{100} \| \bar{w} \|_{Z_\tau}^2 + \tilde{C}_0 \| \bar{w} \|_{H_\tau}^2 \| (\bar{w}, A) \|_{Y_\tau}^2 
\notag\\
&\quad  +\tilde{C}_0 \norm{(\bar w,A)}_{Y_\tau}^2\left( ( \eps^{-1}   +  \delta^{-1}) +    \norm{\bar w}_{Z_\tau} + (1+ \delta^{-1}) \norm{(\bar w,A)}_{X_\tau}+ \delta   \norm{\bar w}_{H_\tau}  \right)
\notag\\ \label{diesel:1}
&\quad + \tilde{C}_0\norm{(\bar w,A)}_{X_\tau}^2 \left( (\eps^{-1}  + \eps^{-1}\delta^{-1} +   \delta^{-1})+   \norm{\bar w}_{Z_\tau}+ (1+ \delta^{-1}) \norm{(\bar w,A)}_{X_\tau} + \delta    \norm{\bar w}_{H_\tau}  \right).
\end{align}

\noindent To go from the second inequality to the final inequality, we have simply used Young's inequality for products to split the trilinear term, and denoted by $\tilde{C}_0$ the resulting (universal) constant. The constant $\tilde{C}_0$ is independent of the parameters $\delta, \eps$. We now take $\delta \gg 1$ so as to ensure that 
\begin{align} \label{choice:delta}
\frac{1}{100} + \frac{\tilde{C}_0}{\delta^{2}} \leq \frac{1}{16}, 
\end{align}

\noindent upon which the first two $\| \bar{w} \|_{Z_\tau}^2$ terms in \eqref{diesel:1} can be absorbed to the left-hand side. This yields the bound 
\begin{align} \n
&\frac{d}{2dt} \norm{(\bar w,A)}_{X_\tau}^2 + (-\dot \tau) \norm{(\bar w,A)}_{Y_\tau}^2 + \frac{1}{16} \norm{\bar w}_{Z_\tau}^2 + \frac{1-\eps}{8 \eps (1+t/\eps)^2} \norm{\bar w}_{H_\tau}^2 \\
&\quad \leq \tilde{C}_1  \| (\bar{w}, A) \|_{Y_\tau}^2 \Big( \| \bar{w} \|_{H_\tau}^2 +  \left( \eps^{-1} +     \norm{\bar w}_{Z_\tau} +  \norm{(\bar w,A)}_{X_\tau}+ \delta   \norm{\bar w}_{H_\tau}  \right) \Big)
\notag\\ \n
&\qquad \qquad + \tilde{C}_1 \norm{(\bar w,A)}_{X_\tau}^2  \left( \eps^{-1} +   \norm{\bar w}_{Z_\tau}+  \norm{(\bar w,A)}_{X_\tau} + \delta    \norm{\bar w}_{H_\tau}  \right),
\end{align}
for another universal constant $\tilde{C}_1$, which is again independent of $\eps$, and large values of $\delta$. 

By multiplying through by a sufficiently large universal constant, and taking $\eps \leq 1/64$, we obtain
\begin{align*}
&\frac{d}{dt} \norm{(\bar w,A)}_{X_\tau}^2 + (-\dot \tau) \norm{(\bar w,A)}_{Y_\tau}^2 + \frac{1}{16} \norm{\bar w}_{Z_\tau}^2 + \frac{1}{64 \eps } \norm{\bar w}_{H_\tau}^2 \\
&\quad \leq \Gamma_1(t) \| (\bar{w}, A) \|_{Y_\tau}^2 + \Gamma_2(t) \norm{(\bar w,A)}_{X_\tau}^2,
\end{align*}
where we have defined
\begin{subequations}
\begin{align} \label{def:gamma:1}
&\Gamma_1(t) := C_1 \Big( \| \bar{w} \|_{H_\tau}^2 + \left(  \eps^{-1} +   \frac 14  \norm{\bar w}_{Z_\tau} + \norm{(\bar w,A)}_{X_\tau}+ \delta   \norm{\bar w}_{H_\tau}  \right) \Big) \\ \label{def:gamma:2}
&\Gamma_2(t) := C_2 \Big(  \eps^{-1} +  \frac{1}{4} \norm{\bar w}_{Z_\tau}+  \norm{(\bar w,A)}_{X_\tau} + \delta    \norm{\bar w}_{H_\tau}  \Big).
\end{align}
\end{subequations}
for some universal constants $C_1, C_2 \geq 1$.
We now make the selection of 
\[
\dot{\tau} = - \Gamma_1 -1
\] 
from which the following integral identity and inequality follow: 
\begin{align} \label{Int:id}
&\tau(t) = \tau_0 - t - \int_0^t \Gamma_1(s) \ud s, \qquad E(t) \le E(0) +   \abs{ \int_0^t \Gamma_2(s) \| (\bar{w}, A) \|_{X_\tau(s)}^2 \ud s },
\end{align}
where the total analytic energy, $E(t)$, has been defined in \eqref{def:E}. The main result will now follow from:  
\begin{lemma}
\label{lem:eps:choice}
Fix the parameter $\delta$ according to \eqref{choice:delta}. There exist universal constants $C_1, C_2$ so that if the parameters $\eps$ and the time of existence $T_\ast$ satisfy simultaneously the inequalities 
\begin{subequations}
\label{eq:eps:choice}
\begin{align} \label{choice:eps:a}
&\frac{3}{2} C_1 \eps  E_0 \leq  \frac{\tau_0}{8}, \\ \label{choice:T:b}
&T_\ast^{\frac 1 4} C_2 (1 + \delta)  (T_\ast^{\frac 12} \eps^{-1} + 1) \le 1, \\ \label{choice:T:c}
&T_\ast^{\frac 1 2} C_1 (1 + \delta) (T_\ast^{\frac 1 2} + 1) \left(\frac{3}{2}E_0\right)^{\frac{1}{2}} \leq \frac{\tau_0}{8}, \\ \label{Tast.tau}
&T_\ast \leq \frac{\tau_0}{4} , \\
&T_\ast^{\frac 14} (1 + E_0^{\frac 14}) \leq \frac{1}{16} \, .
\end{align}
\end{subequations}
Then $|E(t)| \le \frac{3}{2}E(0)$ for all $t \in [0, T_\ast]$ and $\tau(t) \ge \frac{\tau_0}{2}$ for all $t \in [0, T_\ast]$. Moreover, it is possible to select the parameter $\eps$ and the time of existence $T_\ast$ so as to achieve the inequalities \eqref{eq:eps:choice}.
\end{lemma}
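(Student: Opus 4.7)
The argument is a standard continuity/bootstrap built on the two identities in \eqref{Int:id}, followed by a short parameter-selection step. Introduce
\[
T^\sharp := \sup\bigl\{ T \in [0,T_\ast] : E(t) \leq 2 E_0 \text{ and } \tau(t) \geq \tau_0/2 \text{ for all } t \in [0,T]\bigr\}.
\]
Continuity of $t \mapsto E(t)$ and $t \mapsto \tau(t)$, together with $\tau(0) = \tau_0$ and the initial datum hypothesis, ensures $T^\sharp > 0$. The goal is to upgrade the bootstrap bounds to the strictly improved inequalities $\tau(t) \geq 3\tau_0/4$ and $E(t) \leq \tfrac{3}{2} E_0$ throughout $[0,T^\sharp]$, whence continuity forces $T^\sharp = T_\ast$ and the lemma follows.

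For the $\tau$-upgrade, expand $\Gamma_1$ via \eqref{def:gamma:1} and apply Cauchy--Schwarz in time using the definition \eqref{def:E} of $E$ under the bootstrap $E \leq 2 E_0$. Using $\int_0^t \|\bar w\|_{H_\tau}^2 \, ds \leq 64 \eps E(t) \lesssim \eps E_0$, $\int_0^t \|\bar w\|_{Z_\tau} \, ds \lesssim t^{1/2} E_0^{1/2}$, $\int_0^t \|(\bar w,A)\|_{X_\tau} \, ds \lesssim t\, E_0^{1/2}$, and $\int_0^t \|\bar w\|_{H_\tau} \, ds \lesssim t^{1/2}(\eps E_0)^{1/2}$, one obtains a sum of the form
\[
\int_0^t \Gamma_1 \, ds \;\lesssim\; \eps E_0 + t/\eps + t^{1/2} E_0^{1/2} + t\, E_0^{1/2} + \delta \, t^{1/2}(\eps E_0)^{1/2}.
\]
The first summand is absorbed by \eqref{choice:eps:a}; the $-t$ in $\tau(t) = \tau_0 - t - \int_0^t \Gamma_1$ is controlled by \eqref{Tast.tau}; the $t/\eps$ contribution is handled by \eqref{choice:T:b} (recalling $T_\ast \leq \eps$); and the remaining three terms are absorbed by \eqref{choice:T:c}. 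Summing with room to spare gives $\tau(t) \geq 3\tau_0/4$.

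For the $E$-upgrade, insert $\|(\bar w, A)\|_{X_\tau}^2 \leq E(t) \leq 2 E_0$ into the second identity of \eqref{Int:id} to obtain $E(t) \leq E_0 + 2 E_0 \int_0^t \Gamma_2 \, ds$. The integral of $\Gamma_2$ is estimated by the same Cauchy--Schwarz bookkeeping (the same four integrals as above, minus the squared $\|\bar w\|_{H_\tau}^2$ term), producing a finite sum each of whose summands is of the form (positive power of $T_\ast$) $\times$ (constant depending on $\eps,\delta,E_0$). The hypothesis \eqref{choice:T:b} combined with the final inequality of \eqref{eq:eps:choice}---whose role is precisely to absorb the leftover $E_0^{1/2}$ dependence into the factor $(1+E_0^{1/4})$---forces $\int_0^t \Gamma_2 \, ds \leq 1/8$, giving $E(t) \leq E_0 + E_0/4 < \tfrac{3}{2} E_0$ and closing the bootstrap.

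Finally, to exhibit admissible parameters one selects $\eps$ first: any $\eps \in (0, \tau_0/(12 C_1 E_0))$ satisfies \eqref{choice:eps:a}. With $\delta$ and $\eps$ now fixed, each of \eqref{choice:T:b}, \eqref{choice:T:c}, \eqref{Tast.tau}, and the last inequality of \eqref{eq:eps:choice} is of the form ``$T_\ast^\gamma \cdot (\text{fixed positive constant}) \leq (\text{fixed positive constant})$'' for some $\gamma > 0$, and hence all are simultaneously satisfied for $T_\ast$ sufficiently small. The only subtlety, and the main bookkeeping hazard of the proof, is the factor $T_\ast^{1/2}\eps^{-1}$ in \eqref{choice:T:b}: it forces $T_\ast \ll \eps^2$ and therefore dictates that $\eps$ be chosen before $T_\ast$; it is also what ties together the $t/\eps$ contributions in the two integral estimates above.
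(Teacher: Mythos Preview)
Your argument is correct and follows essentially the same route as the paper: bound $\int_0^t \Gamma_1$ and $\int_0^t \Gamma_2$ via Cauchy--Schwarz in time against the components of $E$ in \eqref{def:E}, then choose $\eps$ first (from \eqref{choice:eps:a}) and $T_*$ afterward. The only cosmetic difference is that the paper closes $E$ first by deriving the nonlinear inequality $E(T_*) \le E_0 + T_*^{1/4} E(T_*) + T_*^{1/4} E(T_*)^{3/2}$ (an implicit continuity step) and then reads off the $\tau$ bound from \eqref{seq:2}, whereas you run an explicit bootstrap on $[0,T^\sharp]$ and upgrade $\tau$ before $E$.
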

\begin{proof}[Proof of Lemma~\ref{lem:eps:choice}] We first establish, using the definition of $\Gamma_2$ in \eqref{def:gamma:2}, the following estimate  
\begin{align} \n
\abs{\int_0^{T_\ast} \Gamma_2(s) \ud s} \le & C_2 \int_0^{T_\ast} ( \eps^{-1}  +   \norm{\bar w}_{Z_\tau}+  \norm{(\bar w,A)}_{X_\tau} + \delta    \norm{\bar w}_{H_\tau} ) \ud s \\ \n
\le & C_2 \left(T_\ast  \eps^{- 1} + T_\ast^{\frac 1 2} \Big\| \| \bar{w} \|_{Z_\tau} \Big\|_{L^2(0, T_\ast)} + T_\ast \sup_{t \in [0, T_\ast]} \| (\bar{w}, A) \|_{X_\tau} + \delta T_\ast^{\frac 1 2} \Big\| \| \bar{w} \|_{H_\tau} \Big\|_{L^2(0, T_\ast)} \right) \\ \label{seq:1}
\le & C_2 (1 + \delta)  (T_\ast \eps^{-1} + T_\ast^{\frac 1 2})(1 + E(T_\ast)^{\frac 1 2} ).
\end{align}
Next, we establish using \eqref{def:gamma:1},
\begin{align} 
\abs{\int_0^{T_\ast} \Gamma_1(s) \ud s } \le & C_1 \abs{\int_0^{T_\ast} \Big( \| \bar{w} \|_{H_\tau}^2 + \left(  \eps^{-1} +   \frac 14  \norm{\bar w}_{Z_\tau} + \norm{(\bar w,A)}_{X_\tau}+ \delta   \norm{\bar w}_{H_\tau}  \right) \Big) \ud s }
\notag \\
\le & C_1 \eps  \Big\| \frac{1}{64 \eps} \| \bar{w} \|_{H_\tau} \Big\|_{L^2(0,T_\ast)}^2 + C_1 \Big(T_\ast  \eps^{-1} + T_\ast^{\frac 1 2} \Big\| \| \bar{w} \|_{Z_\tau} \Big\|_{L^2(0, T_\ast)} \notag  \\ 
&\qquad \qquad \qquad \qquad  \qquad \qquad + T_\ast \Big\| \| (\bar{w}, A) \|_{X_\tau} \Big\|_{L^\infty(0, T_\ast)} +  \delta T_\ast^{\frac 1 2} \Big\| \| \bar{w} \|_{H_\tau} \Big\|_{L^2(0, T_\ast)} \Big) \notag \\ 
\le & C_1 \eps  E(T_\ast) + C_1  (1 + \delta) (T_\ast \eps^{-1} + T_\ast^{\frac 1 2}) E(T_\ast)^{\frac{1}{2}}.
\label{seq:2}
\end{align}
We now select $\eps$ via 
\begin{align} \label{eps:choice}
\frac{3}{2} C_1 \eps  E_0 < \frac{\tau_0}{8}.
\end{align}
Once $\eps$ has been selected ({\em depending only on the initial datum}) through \eqref{eps:choice}, we pick $T_\ast$ depending on $\delta, \eps$ in order to satisfy simultaneously the two inequalities
\begin{align} \label{T:choice}
C_2 (1 + \delta)(T_\ast^{\frac 34} \eps^{-1}  + T_\ast^{\frac 1 4}) \le 1, \qquad C_1  (1 + \delta) (T_\ast \eps^{-1} + T_\ast^{\frac 1 2}) \left(\frac{3}{2}E_0\right)^{\frac{1}{2}} < \frac{\tau_0}{8}.
\end{align}
Such a choice is possible because every parameter other than $T_\ast$ in \eqref{T:choice} has been fixed already, so we can take $T_\ast$ small enough so as to achieve \eqref{T:choice}.

Inserting the first inequality in \eqref{T:choice} into the second integral inequality in \eqref{Int:id}, we obtain the nonlinear inequality 
\begin{align*}
E(T_\ast) &\le  E_0 +  \sup_{t \in [0, T_\ast]} \| (\bar{w}, A) \|_{X_{\tau(s)}} \|^2 \int_0^{T_\ast} \Gamma_2(s) \ud s| \\
&\le E_0 + C_2 (1 + \delta) (T_\ast  \eps^{-1} + T_\ast^{\frac 1 2})(1 + E(T_\ast)^{\frac  1 2} ) E(T_\ast) \\
&\le   E_0 +  T_\ast^{\frac 1 4} E(T_\ast) +  T_\ast^{\frac 1 4} E(T_\ast)^{\frac 3 2},
\end{align*} 
which implies the desired bound, $E(T_\ast) \le \frac 3 2 E_0$ by selecting $T_\ast$ small enough to obey
\[
T_\ast^{\frac 14} (1 + E_0^{\frac 14}) \leq \frac{1}{16} \, .
\]
We subsequently insert the inequality $E(T_\ast) \le \frac 3 2 E_0$ together with the two inequalities \eqref{eps:choice} and the second inequality of \eqref{T:choice} into \eqref{seq:2} so as to ensure for all $t \in [0, T_\ast]$
\begin{align} \label{ineq:tau}
|\tau(t)| \ge \tau_0 - |t| - \abs{\int_0^{t} \Gamma_1(s) \ud s} \ge \tau_0 -T_\ast - \frac{\tau_0}{8} - \frac{\tau_0}{8} \ge \frac{\tau_0}{2},
\end{align}
where we have appealed to the last inequality on $T_\ast$, \eqref{Tast.tau}, to establish the final inequality in \eqref{ineq:tau}. In summary, one first chooses $\delta$, then $\eps$, and $T_\ast$ is picked last.
\end{proof}

\section{Proof of the Main Nonlinear Lemma}
\label{sec:main:lemma}

Before turning to the proof, we recall a few technical results which are used in the proof of Lemma~\ref{lem:main}.

\subsection{Properties the weight and the lift function}
Let us record the following straightforward inequality
\begin{align} \label{Iy:theta:est}
\abs{I_y[\theta_\xi]} =  \int_0^y \theta_\xi(y', t) \ud y'  \le \int_0^y 1 \ud y' \le  y, 
\end{align}
and emphasize that \eqref{Iy:theta:est} is \textit{independent} of the parameter $\eps$. Recalling \eqref{eq:rho:def}, we note that $\theta_\xi$ obeys
\begin{align}
(1-\theta_\xi) \rho = e^{- \frac{y^2 \langle \xi \rangle^2}{2(1 + \frac{t}{\eps} )}} e^{\frac{y^2}{8(1 + \frac t \eps)}} = e^{-\frac{y^2(4\langle \xi \rangle^2 - 1)}{8(1 + \frac{t}{\eps})}} \leq e^{-\frac{3 y^2 \langle \xi \rangle^2}{8(1 + \frac{t}{\eps})}}   
\label{eq:weight:choice}
\end{align}
from which we may deduce the the inequality
\begin{align}
 \label{theta:rho:bound}
&\| (1-\theta_\xi) \rho \|_{L^2_y} \lesssim \frac{(1 +  t/\eps  )^{1/4}}{\langle \xi \rangle^{1/2}}  \lesssim \frac{1}{\langle \xi \rangle^{1/2}} ,
\end{align}
which will be used frequently below.

It will be convenient to appeal to the bound
\begin{align}
\norm{I_y[f]}_{L^\infty_y} \les (1+t/\eps)^{1/4} \norm{\rho f}_{L^2([0,\infty))} \les  \norm{\rho f}_{L^2([0,\infty))} 
\label{eq:v:L:infty}
\end{align}
which is a consequence of the estimate $\norm{\rho^{-1}}_{L^2(0,\infty)} \les (1+t/\eps)^{1/4}$ and 
\begin{align*}
\norm{I_y[f]}_{L^\infty_y}  \leq \int_0^\infty \rho(y) f(y) \rho(y)^{-1} dy \leq \norm{\rho f}_{L^2([0,\infty))}  \norm{\rho^{-1}}_{L^2[0,\infty)} .
\end{align*}
As a consequence of the proof of \eqref{eq:v:L:infty}, the fundamental theorem of calculus, we have that whenever $f|_{y=0}=0$, the estimate
\begin{align}
\norm{f}_{L^\infty_y} \leq \norm{I_y[\p_y f]}_{L^\infty_y} \les (1+t/\eps)^{1/4} \norm{\rho \p_y f}_{L^2([0,\infty))} \les  \norm{\rho \p_y f}_{L^2([0,\infty))}
\label{eq:w:L:infty}
\end{align}
also holds. The following weighted Poincar\'e/Hardy inequality will be useful for our proof. 
\begin{lemma}
\label{lem:Hardy}
Let the weight function $\rho$ be as defined in \eqref{eq:rho:def}. Assume that $f$ is such that $\rho \p_y f \in L^2$. Then we have
\begin{align*}
\norm{\rho f}_{L^2}^2 + \frac{1}{4(1+t/\eps)} \norm{y \rho f}_{L^2}^2 \leq 4(1+t/\eps) \norm{\rho \partial_y f}_{L^2}^2. 
\end{align*}
\end{lemma}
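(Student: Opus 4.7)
The plan is to reduce the weighted inequality to a standard identity by conjugating out the weight. Set $g = \rho f$, so that $\norm{\rho f}_{L^2} = \norm{g}_{L^2}$, $\norm{y \rho f}_{L^2} = \norm{y g}_{L^2}$, and the right-hand side becomes an expression in $g$. Since $\partial_y \rho = \frac{y}{4(1+t/\eps)}\rho$, the product rule gives
\begin{align*}
\rho \,\partial_y f = \partial_y g - \frac{y}{4(1+t/\eps)} g.
\end{align*}
Squaring this identity pointwise, integrating over $y\in[0,\infty)$, and expanding the cross term yields
\begin{align*}
\norm{\rho \partial_y f}_{L^2}^2
= \norm{\partial_y g}_{L^2}^2 - \frac{1}{2(1+t/\eps)} \int_0^\infty y\, g\, \partial_y g \,\ud y + \frac{1}{16(1+t/\eps)^2}\norm{y g}_{L^2}^2.
\end{align*}

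The heart of the argument is the middle cross term. Writing $2 g \,\partial_y g = \partial_y (g^2)$ and integrating by parts in $y$,
\begin{align*}
\int_0^\infty y\, g\, \partial_y g \,\ud y = \tfrac{1}{2}\bigl[y g^2\bigr]_0^\infty - \tfrac{1}{2}\int_0^\infty g^2 \,\ud y = -\tfrac{1}{2}\norm{g}_{L^2}^2,
\end{align*}
provided the boundary term $y g^2 \to 0$ as $y\to \infty$ and at $y=0$. This is the one place I expect mild subtlety, and I would justify it by first proving the inequality for $f$ in a dense subclass (say $C_c^\infty([0,\infty))$, where both sides are finite and all boundary terms manifestly vanish), and then extending by a standard density/approximation argument using that the right-hand side is finite by hypothesis. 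Substituting the cross-term identity back gives the \emph{equality}
\begin{align*}
\norm{\rho \partial_y f}_{L^2}^2
= \norm{\partial_y g}_{L^2}^2 + \frac{1}{4(1+t/\eps)}\norm{g}_{L^2}^2 + \frac{1}{16(1+t/\eps)^2}\norm{y g}_{L^2}^2.
\end{align*}

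Finally, dropping the non-negative term $\norm{\partial_y g}_{L^2}^2$ and multiplying through by $4(1+t/\eps)$ yields
\begin{align*}
\norm{g}_{L^2}^2 + \frac{1}{4(1+t/\eps)} \norm{y g}_{L^2}^2 \leq 4(1+t/\eps)\norm{\rho \partial_y f}_{L^2}^2,
\end{align*}
which, upon recalling $g=\rho f$, is the claimed inequality. The main obstacle is purely technical: verifying the vanishing of the boundary term $y g^2|_{y=\infty}$, which is why the cleanest presentation is to prove the identity first for compactly supported smooth $f$, and then pass to the limit in the class of $f$ with $\rho \partial_y f \in L^2$.
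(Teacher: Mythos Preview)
Your proof is correct and takes a genuinely different route from the paper. The paper works directly with $\norm{\rho f}_{L^2}^2$: writing $1=\partial_y\{y\}$ and integrating by parts produces
\[
\norm{\rho f}_{L^2}^2 + \frac{1}{2(1+t/\eps)}\norm{y\rho f}_{L^2}^2 = -2\int_0^\infty y\,\rho^2 f\,\partial_y f\,\ud y,
\]
and the right side is then bounded by Cauchy--Schwarz followed by Young's inequality, absorbing a $\norm{y\rho f}_{L^2}^2$ term back to the left. Your approach instead conjugates to $g=\rho f$, expands $\norm{\rho\,\partial_y f}_{L^2}^2$, and integrates the cross term by parts to obtain the exact \emph{identity}
\[
\norm{\rho\,\partial_y f}_{L^2}^2 = \norm{\partial_y g}_{L^2}^2 + \frac{1}{4(1+t/\eps)}\norm{g}_{L^2}^2 + \frac{1}{16(1+t/\eps)^2}\norm{yg}_{L^2}^2,
\]
from which the inequality follows by discarding $\norm{\partial_y g}_{L^2}^2\geq 0$. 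Your version is slightly sharper in spirit (it exhibits the deficit term explicitly and avoids Cauchy--Schwarz/Young), while the paper's argument is marginally more direct in that it never introduces the substitution. Both proofs have the same tacit boundary-term justification at $y=\infty$; you are right to flag it and handle it by density.
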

\begin{proof}[Proof of Lemma~\ref{lem:Hardy}]
\begin{align*}  
\int_0^\infty \rho^2 f^2 =  \int_0^\infty e^{\frac{y^2}{4(1+t/\eps)}} f^2 &=  \int_0^\infty \p_y \{ y \} e^{\frac{y^2}{4(1+t/\eps)}} f^2 \notag \\
&= \frac{-1}{2(1+t/\eps)}  \int_0^\infty y^2  e^{\frac{y^2}{4(1+t/\eps)}} f^2 -2  \int_0^\infty y e^{\frac{y^2}{4(1+t/\eps)}} f \p_y f. 
\end{align*}
Due to the monotone increasing and super-exponential nature of our weight, the negative term on the right side of the above is the key contribution. Rearranging yields 
\begin{align*}
\norm{\rho f}_{L^2}^2 + \frac{1}{2(1+t/\eps)} \norm{y \rho f}_{L^2}^2 
&\leq 2 \left| \int_0^\infty y \rho^2 f \p_y f\right|  \leq 2 \norm{y \rho f}_{L^2} \norm{\rho \p_y f}_{L^2} \notag\\
& \leq \frac{1}{4(1+t/\eps)} \norm{y \rho f}_{L^2}^2 + 4 (1+t/\eps) \norm{\rho \p_y f}_{L^2}^2 \, ,
\end{align*}
which concludes the proof of the lemma.
\end{proof}

\subsection{Error terms in the $A$ energy}
First, using \eqref{eq:v:L:infty} and the Cauchy-Schwartz inequality it follows that the term $T_{\mathcal A,1}$ defined in \eqref{eq:T:A12:a} may be bounded as
\begin{align*}
\abs{T_{\mathcal A,1}}
&\leq \norm{|\xi|^{1/2} \brak{\xi}^r e^{\tau |\xi|} I_\infty[\bar w_\xi]}_{L^2_\xi L^\infty_y} \norm{|\xi|^{1/2} \brak{\xi}^r e^{\tau |\xi|} A_\xi}_{L^2_\xi} 
 \les \norm{|\p_x|^{1/2} \bar w}_{\tau,r}  \norm{|\p_x|^{1/2} A}_{\tilde{\tau,r}}
\,. 
\end{align*}
In order to estimate the term $T_{\mathcal{A},2}$ defined in \eqref{eq:T:A12:b}
we use that for any $r\geq 0$ we have
\begin{align*}
\brak{\xi}^r \les \brak{\xi-\eta}^r + \brak{\eta}^r 
\end{align*}
where the implicit constant depends solely on $r$, and that $\brak{\xi}^{1/2} \leq \brak{\eta}^{1/2} \brak{\xi-\eta}^{1/2}$, to conclude that
\begin{align*}
\abs{T_{\mathcal A,2}} 
&\les \int_{\RR} \int_{\RR} \abs{\xi-\eta} \left(\brak{\xi-\eta}^{r-1/2} + \brak{\eta}^{r-1/2}\right) \abs{A_\eta} e^{\tau |\eta|} \abs{A_{\xi-\eta}} e^{\tau |\xi-\eta|} \brak{\xi}^{r+1/2} \abs{A_\xi} e^{\tau|\xi|} \ud \eta \ud \xi
\notag\\
&\les \norm{|\p_x|^{1/2} A}_{\tilde{\tau,r}} \norm{A_\eta e^{\tau |\eta|}}_{L^1_\eta} \norm{\brak{\p_x}^{1/2} A}_{\tilde{\tau,r}} 
 + \norm{|\xi-\eta| \brak{\xi-\eta}^{1/2} A_{\xi-\eta} e^{\tau |\xi-\eta|}}_{L^1_{\xi-\eta}} \norm{A}_{\tilde{\tau,r}}^2
\notag\\
&\les \norm{|\p_x|^{1/2} A}_{\tilde{\tau,r}}^2 \norm{A}_{\tilde{\tau,r}}  + \norm{|\p_x|^{1/2} A}_{\tilde{\tau,r}} \norm{A}_{\tilde{\tau,r}}^2.
\end{align*}
In the last inequality above we have used that for $r>3/2$ we have $\brak{\xi}^{-r+1} \in L^2_\xi$. This proves~\eqref{eq:Shaq:A}.

\subsection{Error terms in the $\bar w$ energy}
\label{sec:bar:w:energy}

\subsubsection{The $T_{\Ncal}$ term}
According to \eqref{eq:N:def} and \eqref{eq:T:cal:1}, we decompose $T_{\Ncal}$ as
\begin{align}
T_{\Ncal} 
&= i \int_{\RR} \int_{\RR} \int_0^\infty \bar w_\eta (\xi-\eta) \bar w_{\xi-\eta}  \overline{\bar w_\xi} \rho^2 e^{2\tau|\xi|} \brak{\xi}^{2r} \ud y \ud\eta\ud \xi
\notag\\
&\quad- i \int_{\RR} \int_{\RR}  \int_0^\infty  \eta I_y[\bar w_\eta] \partial_y \bar w_{\xi-\eta} \overline{\bar w_\xi} \rho^2 e^{2\tau|\xi|} \brak{\xi}^{2r}  \ud y \ud\eta\ud \xi
\notag\\
&=: T_{\Ncal}^{(1)} + T_{\Ncal}^{(2)}
\,.
\label{eq:Barkley:0}
\end{align}
For the first term above we appeal to the inequality 
\[
|\xi-\eta|^{1/2} \brak{\xi}^r \les (|\eta|^{1/2} + |\xi|^{1/2}) \left(\brak{\eta}^r + \brak{\xi-\eta}^r\right),
\] 
to the triangle inequality of the exponential term, and to the bound \eqref{eq:w:L:infty} to conclude
\begin{align}
 \abs{T_{\Ncal}^{(1)}}
 &\les \int_{\RR} \int_{\RR}    |\eta|^{1/2} \norm{\rho \bar w_\eta}_{L^2_y}  |\xi-\eta|^{1/2}\norm{\rho\bar w_{\xi-\eta}}_{L^2_y}  \left(\brak{\eta}^r + \brak{\xi-\eta}^r\right) \brak{\xi}^{r} \norm{\bar w_\xi}_{L^\infty_y}  e^{2\tau|\xi|}    \ud\eta\ud \xi
 \notag\\
 &\quad +  \int_{\RR} \int_{\RR}   \norm{\bar w_\eta}_{L^\infty_y}  |\xi-\eta|^{1/2}\norm{\rho \bar w_{\xi-\eta}}_{L^2_y}  \left(\brak{\eta}^r + \brak{\xi-\eta}^r\right)  |\xi|^{1/2} \brak{\xi}^{r} \norm{\rho \bar w_\xi}_{L^2_y}  e^{2\tau|\xi|}   \ud\eta\ud \xi
 \notag\\
&\les  \norm{|\p_x|^{1/2} \bar w}_{\tau,r} \norm{\p_y \bar w}_{\tau,r} \norm{|\zeta|^{1/2} \rho \bar w_{\zeta} e^{\tau|\zeta|}}_{L^1_{\zeta} L^2_y} 
  +  \norm{|\p_x|^{1/2} \bar w}_{\tau,r}^2  \norm{ \rho \p_y \bar w_{\zeta} e^{\tau|\zeta|}}_{L^1_{\zeta} L^2_y} 
\notag\\
&\les  \norm{|\p_x|^{1/2} \bar w}_{\tau,r}^2  \norm{\p_y \bar w}_{\tau,r}
\label{eq:Barkley:1}
\end{align}
where in the last inequality we have used that $r>1/2$.

For the second term in \eqref{eq:Barkley:0} we proceed similarly, but appeal to the bound \eqref{eq:v:L:infty}, which yields 
\begin{align}
 \abs{T_{\Ncal}^{(2)}}
 &\les \int_{\RR} \int_{\RR}    |\eta|^{1/2} \brak{\eta}^r \norm{I_y[\bar w_\eta]}_{L^\infty_y}  |\xi-\eta|^{1/2}\norm{\rho \p_y \bar w_{\xi-\eta}}_{L^2_y}     \brak{\xi}^{r} \norm{\rho \bar w_\xi}_{L^2_y}  e^{2\tau|\xi|}    \ud\eta\ud \xi
 \notag\\
  &\quad +  \int_{\RR} \int_{\RR}   |\eta|^{1/2} \brak{\eta}^r \norm{I_y[\bar w_\eta]}_{L^\infty_y} \norm{\rho \p_y \bar w_{\xi-\eta}}_{L^2_y}     |\xi|^{1/2} \brak{\xi}^{r} \norm{\rho \bar w_\xi}_{L^2_y}  e^{2\tau|\xi|}   \ud\eta\ud \xi
 \notag\\
 &\quad + \int_{\RR} \int_{\RR}    |\eta|  \norm{I_y[\bar w_\eta]}_{L^\infty_y}  \brak{\xi-\eta}^r \norm{\rho \p_y \bar w_{\xi-\eta}}_{L^2_y}   \brak{\xi}^{r} \norm{\rho \bar w_\xi}_{L^2_y}  e^{2\tau|\xi|}    \ud\eta\ud \xi
 \notag\\
&\les  \norm{|\p_x|^{1/2} \bar w}_{\tau,r} \norm{\bar w}_{\tau,r}\norm{|\xi-\eta|^{1/2} \rho \p_y \bar w_{\xi-\eta} e^{\tau|\xi-\eta|}}_{L^1_{\xi-\eta} L^2_y} 
\notag\\
&\quad +  \norm{|\p_x|^{1/2} \bar w}_{\tau,r}^2 \norm{\rho \p_y \bar w_{\xi-\eta} e^{\tau|\xi-\eta|}}_{L^1_{\xi-\eta} L^2_y} +  \norm{\bar w}_{\tau,r} \norm{\p_y \bar w}_{\tau,r} \norm{|\eta| \rho \bar w_{\eta} e^{\tau|\eta|}}_{L^1_{\eta} L^2_y} 
\notag\\
&\les  \norm{|\p_x|^{1/2} \bar w}_{\tau,r} \norm{\brak{\p_x}^{1/2} \bar w}_{\tau,r}  \norm{\p_y \bar w}_{\tau,r}
 \label{eq:Barkley:2}
\end{align}
since $r>1$. Combining the estimates \eqref{eq:Barkley:1}--\eqref{eq:Barkley:2} yields the desired bound \eqref{eq:Barkley:*}.

\subsubsection{The $T_{\Lcal}$ term} Recall that the term $T_{\Lcal}$ is defined in \eqref{eq:T:cal:1}, via~\eqref{eq:NLMB:def}, as
\begin{align*}
T_{\Lcal}
&= i \int_{\RR} \int_{\RR}  \int_0^\infty   \rho^2 e^{2 \tau |\xi|} \langle \xi \rangle^{2r} \overline{\bar{w}_\xi} \eta \biggl( \bar w_{\xi-\eta}  A_{\eta} \theta_{\eta} + A_{\xi-\eta} \theta_{\xi-\eta}   \bar w_{\eta}     -   I_y[\bar w_\eta] A_{\xi-\eta} \partial_y \theta_{\xi-\eta} \biggr) \ud y \ud \eta \ud \xi  
\notag \\
&=  T_{\Lcal}^{(1)} + T_{\Lcal}^{(2)} + T_{\Lcal}^{(3)} 
\,. 
\end{align*}
We estimate each of the above terms individually. Using that $0\leq \theta_\xi \leq 1$ pointwise in $t,y,\xi$, that $|\eta|^{1/2} \leq |\xi|^{1/2} + |\xi-\eta|^{1/2}$, and $\brak{\xi}^r \les \brak{\eta}^r + \brak{\xi-\eta}^r$, for the $T_{\Lcal}^{(1)}$ term we have
\begin{align}
\abs{T_{\Lcal}^{(1)}} 
&\le \int_{\RR} \int_{\RR}  \int_0^\infty   \rho^2 e^{\tau |\xi|} e^{\tau |\xi-\eta|} e^{\tau |\eta|}  \brak{\xi}^{2r} \abs{\bar{w}_\xi} |\eta|^{1/2} (|\xi|^{1/2} + |\xi-\eta|^{1/2})  \abs{\bar{w}_{\xi-\eta}} \abs{A_{\eta}}    \ud y  \ud \eta \ud \xi  
\notag \\
&\les \norm{|\p_x|^{1/2} \bar w}_{\tau,r} \norm{|\p_x|^{1/2} A}_{\tilde{\tau,r}} \norm{\rho e^{\tau |\xi-\eta|}  \abs{\bar w_{\xi-\eta}}}_{L^1_{\xi-\eta} L^2_y} 
\notag \\
&\qquad \qquad + \norm{\bar w}_{\tau,r} \norm{|\p_x|^{1/2} A}_{\tilde{\tau,r}} \norm{\rho |\xi-\eta|^{1/2} e^{\tau |\xi-\eta|}  \abs{\bar w_{\xi-\eta}}}_{L^1_{\xi-\eta} L^2_y} 
\notag \\
&\qquad \qquad + \norm{|\p_x|^{1/2}  \bar w}_{\tau,r}  \norm{ \bar w}_{\tau,r}   \norm{|\eta|^{1/2} e^{\tau |\eta|}  \abs{A_\eta}}_{L^1_{\eta}}
\notag \\
&\les  \norm{|\p_x|^{1/2} \bar w}_{\tau,r} \norm{\bar w}_{\tau,r} \norm{|\p_x|^{1/2} A}_{\tilde{\tau,r}}
\label{eq:Jordan:1}
\end{align}
since $r>1$. Similarly, for the $T_{\Lcal}^{(2)}$ term we have
\begin{align}
\abs{T_{\Lcal}^{(2)}} 
&\le \int_{\RR} \int_{\RR}  \int_0^\infty   \rho^2 e^{\tau |\xi|} e^{\tau |\xi-\eta|} e^{\tau |\eta|}  \brak{\xi}^{2r} \abs{\bar{w}_\xi} |\eta|^{1/2} (|\xi|^{1/2} + |\xi-\eta|^{1/2})  \abs{\bar{w}_{\eta}} \abs{A_{\xi-\eta}}    \ud y  \ud \eta \ud \xi 
\notag \\
&\les \norm{|\p_x|^{1/2} \bar w}_{\tau,r}^2  \norm{e^{\tau |\xi-\eta|}  \abs{A_{\xi-\eta}}}_{L^1_{\xi-\eta}} 
 + \norm{|\p_x|^{1/2} \bar w}_{\tau,r} \norm{\bar w}_{\tau,r}  \norm{|\xi-\eta|^{1/2} e^{\tau |\xi-\eta|}  \abs{A_{\xi-\eta}}}_{L^1_{\xi-\eta}} 
\notag \\
&\qquad \qquad + \left( \norm{|\p_x|^{1/2}  \bar w}_{\tau,r}  \norm{A}_{\tilde{\tau,r}} +\norm{\bar w}_{\tau,r}  \norm{|\p_x|^{1/2} A}_{\tilde{\tau,r}}  \right)  \norm{\rho |\eta|^{1/2} e^{\tau |\eta|}  \abs{\bar w_\eta}}_{L^1_{\eta}L^2_y}
\notag \\
&\les \norm{|\p_x|^{1/2} \bar w}_{\tau,r}^2 \norm{A}_{\tilde{\tau,r}} +  \norm{|\p_x|^{1/2} \bar w}_{\tau,r} \norm{\bar w}_{\tau,r} \norm{|\p_x|^{1/2} A}_{\tilde{\tau,r}}
\label{eq:Jordan:2}
\, .
\end{align}
The $T_{\Lcal}^{(3)}$ term is treated slightly differently due to the presence of $\partial_y \theta_{\xi-\eta}$. Here we use that 
\begin{align*}
\rho \brak{\xi}^{-1/2} \p_y \theta_{\xi}
=\frac{\rho(y,t) \brak{\xi}^{3/2} y}{ (1+t/\eps)} e^{-\frac{y^2 \brak{\xi}^2}{2(1+t/\eps)}}
&\les \frac{\brak{\xi}^{1/2}}{(1+t/\eps)^{1/2}} e^{\frac{y^2}{8(1+t/\eps)}}  e^{-\frac{y^2 \brak{\xi}^2}{4(1+t/\eps)}}
\notag\\
&\les \frac{\brak{\xi}^{1/2}}{(1+t/\eps)^{1/2}}  e^{-\frac{y^2 \brak{\xi}^2}{8(1+t/\eps)}} \les \brak{\xi}^{1/2}  e^{-\frac{y^2 \brak{\xi}^2}{8(1+t/\eps)}}
\end{align*}
from which it follows upon taking an $L^2$ norm in $y$ and a supremum over $\xi\in \RR$ that
\begin{align}
 \norm{\rho \brak{\xi}^{-1/2} \p_y \theta_{\xi}}_{L^\infty_{\xi} L^2_y} 
  \les 1 \,.
 \label{eq:Jordan:0}
\end{align}
Appealing also to \eqref{eq:v:L:infty} with $f=\bar w_\eta$, 
and to the inequality
\begin{align*}
|\eta| \brak{\xi-\eta}^{1/2} \brak{\xi}^r \les  |\eta| \brak{\xi-\eta}^{r+1/2} + |\eta|^{1/2} \brak{\eta}^r |\xi|^{1/2} \brak{\xi-\eta}^{1/2} + |\eta|^{1/2} \brak{\eta}^r |\xi-\eta|^{1/2} \brak{\xi-\eta}^{1/2}
\end{align*}
we obtain
\begin{align}
\abs{T_{\Lcal}^{(3)}} 
&\leq  \int_{\RR} \int_{\RR}  \int_0^\infty \rho  e^{\tau |\xi|} e^{\tau |\xi-\eta|} e^{\tau |\eta|}  \brak{\xi}^{2r} \abs{\bar{w}_\xi} |\eta| \brak{\xi-\eta}^{1/2} \abs{A_{\xi-\eta}}  \abs{I_y[\bar{w}_{\eta}]}  \frac{\rho \abs{\p_y \theta_{\xi-\eta}}}{\brak{\xi-\eta}^{1/2}}    \ud y  \ud \eta \ud \xi 
\notag\\
&\les \int_{\RR} \int_{\RR} \norm{\rho \bar w_\xi}_{L^2_y} \brak{\xi}^r e^{\tau |\xi|} \abs{A_{\xi-\eta}} e^{\tau|\xi-\eta|} \norm{\rho \bar w_\eta}_{L^2_y} e^{\tau |\eta|} |\eta| \brak{\xi-\eta}^{1/2} \brak{\xi}^r \ud \eta \ud \xi
\notag\\
&\les \norm{\bar w}_{\tau,r} \norm{\brak{\p_x}^{1/2} A}_{\tilde{\tau,r}} \norm{|\eta| \norm{\rho \bar w_\eta}_{L^2_y} e^{\tau |\eta|}}_{L^1_\eta}
+ \norm{|\p_x|^{1/2} \bar w}_{\tau,r}^2 \norm{\brak{\xi-\eta}^{1/2} A_{\xi-\eta} e^{\tau |\xi-\eta|}}_{L^1_{\xi-\eta}}
\notag\\
&\qquad \qquad + \norm{\bar w}_{\tau,r} \norm{|\p_x|^{1/2} \bar w}_{\tau,r} \norm{\brak{\xi-\eta} A_{\xi-\eta} e^{\tau |\xi-\eta|}}_{L^1_{\xi-\eta}}
\notag\\
&\les \norm{\bar w}_{\tau,r}^2 \norm{\brak{\p_x}^{1/2} A}_{\tilde{\tau,r}}  
+ \norm{|\p_x|^{1/2} \bar w}_{\tau,r}^2 \norm{A}_{\tilde{\tau,r}} + \norm{\bar w}_{\tau,r} \norm{|\p_x|^{1/2} \bar w}_{\tau,r} \norm{A}_{\tilde{\tau,r}}
\label{eq:Jordan:3}
\end{align}
since $r> 3/2$. Summing \eqref{eq:Jordan:1}, \eqref{eq:Jordan:2}, and \eqref{eq:Jordan:3}, and massaging the resulting terms we arrive at \eqref{eq:Jordan:*}.

\subsubsection{The $T_{\Mcal}$ term}
This term has the distinguished feature of losing a $y$-weight, which is why we have introduced the vorticity $\p_y \bar w$ in the first place. Recall from \eqref{eq:M:def} and \eqref{eq:T:cal:1} that
\begin{align*}
 T_{\Mcal} = -i \int_{\RR} \int_{\RR}  \int_0^\infty   \rho^2 e^{2 \tau |\xi|} \langle \xi \rangle^{2r} \overline{\bar{w}_\xi} \eta A_\eta I_y[\theta_\eta] \p_y \bar w_{\xi-\eta}  \ud y \ud \eta \ud \xi  
 \, .
\end{align*}
Recall from \eqref{Iy:theta:est} that $\abs{I_y [\theta_\eta]} \leq y$, and thus, with the cutoff $\chi$ defined in \eqref{eq:chi:def} we have that 
\begin{align*}
 \abs{(1-\chi(y)) I_y [\theta_\eta]} \les 1
\end{align*}
pointwise in $\eta$. Therefore, the contribution to $T_{\Mcal}$ coming from the support of $1-\chi(y)$ may be bounded as
\begin{align}
&\abs{\int_{\RR} \int_{\RR}  \int_0^\infty   \rho^2 e^{2 \tau |\xi|} \langle \xi \rangle^{2r} \overline{\bar{w}_\xi} \eta A_\eta I_y[\theta_\eta] \p_y \bar w_{\xi-\eta} (1-\chi(y)) \ud y \ud \eta \ud \xi  } 
\notag\\
& \les \int_{\RR} \int_{\RR}  \int_0^\infty     e^{\tau |\xi|}e^{\tau |\eta|}e^{\tau |\xi-\eta|} \langle \xi \rangle^{2r} \rho \abs{\bar{w}_\xi} |\eta| \abs{A_\eta} \rho \abs{\p_y \bar w_{\xi-\eta}} \ud y \ud \eta \ud \xi  
\notag\\
&\les \norm{|\p_x|^{1/2} \bar w}_{\tau,r} \norm{|\p_x|^{1/2} A}_{\tilde{\tau,r}} \norm{\rho \abs{\p_y \bar w_{\xi-\eta}}e^{\tau |\xi-\eta|} }_{L^1_{\xi-\eta} L^2_y} 
+ \norm{\brak{\p_x}^{1/2} \bar w}_{\tau,r} \norm{\p_y \bar w}_{\tau,r} \norm{|\eta|^{1/2} A_\eta e^{\tau|\eta|}}_{L^1_\eta} 
\notag\\
&\qquad +  \norm{\brak{\p_x}^{1/2} \bar w}_{\tau,r} \norm{A}_{\tilde{\tau,r}} \norm{\rho |\xi-\eta|^{1/2} \abs{\p_y \bar w_{\xi-\eta}}e^{\tau |\xi-\eta|} }_{L^1_{\xi-\eta} L^2_y} 
\notag\\
&\les \norm{\brak{\p_x}^{1/2} \bar w}_{\tau,r} \norm{\p_y \bar w}_{\tau,r} \norm{\brak{\p_x}^{1/2} A}_{\tilde{\tau,r}} 
\,.
\label{eq:Jordan:4}
\end{align}
On the other hand, the contribution to $T_{\Mcal}$ from the support of $\chi(y)$ requires us to use the third term on the left side of \eqref{eq:Kobe:2}, but with $r$ replaced by $r-1/2$. More precisely, we use that $\chi(y) \abs{I_y[\theta_\eta]} \leq y \chi(y)$ pointwise in $\eta$, and the inequality 
\begin{align*}
 |\eta| \brak{\xi}^{2r} \les |\eta| \brak{\eta}^{r-1/2} \brak{\xi}^{r+1/2} + |\eta| \brak{\xi-\eta}^{r-1/2} \brak{\xi}^{r+1/2}
\end{align*}
to deduce that
\begin{align}
 &\abs{\int_{\RR} \int_{\RR}  \int_0^\infty   \rho^2 e^{2 \tau |\xi|} \langle \xi \rangle^{2r} \overline{\bar{w}_\xi} \eta A_\eta I_y[\theta_\eta] \p_y \bar w_{\xi-\eta} \chi(y) \ud y \ud \eta \ud \xi  } 
\notag\\
& \les \int_{\RR} \int_{\RR}    e^{\tau |\xi|}e^{\tau |\eta|}e^{\tau |\xi-\eta|} \langle \xi \rangle^{2r} \norm{\rho \bar{w}_\xi}_{L^2_y}  |\eta| \abs{A_\eta} \norm{\rho y \chi(y) \p_y \bar w_{\xi-\eta}}_{L^2_y}  \ud \eta \ud \xi  
\notag\\
&\les \norm{\brak{\p_x}^{1/2} \bar w}_{\tau,r} \left( \norm{\abs{\p_x}^{1/2} A}_{\tilde{\tau,r}} \norm{ \rho y \chi(y) \p_y \bar w_{\xi-\eta} e^{\tau |\xi-\eta|}}_{L^1_{\xi-\eta} L^2_y}  + \norm{y \chi \p_y \bar w}_{\tau,r-1/2} \norm{|\eta| A_\eta e^{\tau|\eta|}}_{L^1_\eta} \right) \notag\\
&\les \norm{\brak{\p_x}^{1/2} \bar w}_{\tau,r} \norm{y \chi \p_y \bar w}_{\tau,r-1/2}   
 \norm{\abs{\p_x}^{1/2} A}_{\tilde{\tau,r}} 
 \label{eq:Jordan:5}
\end{align}
since $r$ was taken to be sufficiently large. Combining \eqref{eq:Jordan:4} and \eqref{eq:Jordan:5} we obtain the estimate \eqref{eq:Jordan:**}.

\subsubsection{The $T_{\Bcal}$ term}
According to \eqref{eq:B:def:*} and \eqref{eq:T:cal:1}, we decompose the $T_{\Bcal}$ term as
\begin{align}
T_{\Bcal}
&= \int_{\RR}  \int_0^\infty A_\xi (\partial_t - \partial_{yy})\theta_\xi  \overline{\bar w_\xi} \rho^2 e^{2\tau|\xi|} \brak{\xi}^{2r} \ud y   \ud \xi 
\notag\\
&\qquad + \int_{\RR}   \int_0^\infty \left(\theta_\xi -1\right) \partial_t A_\xi  \overline{\bar w_\xi} \rho^2 e^{2\tau|\xi|} \brak{\xi}^{2r} \ud y   \ud \xi 
\notag\\
&\qquad + i \int_{\RR}  \int_0^\infty \xi \left( I_\infty[\bar w_\xi] - I_y[\bar w_\xi] \right) \overline{\bar w_\xi} \rho^2 e^{2\tau|\xi|} \brak{\xi}^{2r} \ud y   \ud \xi 
\notag\\
&\qquad +i \int_{\RR}  \int_0^\infty \xi   A_\xi \bigl( y (\theta_\xi-1)  - ( I_\infty[1-\theta_\xi] - I_y[1-\theta_\xi] )\bigr)  \overline{\bar w_\xi} \rho^2 e^{2\tau|\xi|} \brak{\xi}^{2r} \ud y   \ud \xi 
\notag\\
&\qquad +i \int_{\RR} \int_{\RR} \int_0^\infty \xi A_\eta  A_{\xi-\eta} \left( \theta_\eta \theta_{\xi-\eta} -1\right)  \overline{\bar w_\xi} \rho^2 e^{2\tau|\xi|} \brak{\xi}^{2r} \ud y \ud \eta \ud \xi 
\notag\\
&\qquad -i \int_{\RR} \int_{\RR} \int_0^\infty \eta A_\eta A_{\xi-\eta} I_y[\theta_\eta] \partial_y \theta_{\xi-\eta}  \overline{\bar w_\xi} \rho^2 e^{2\tau|\xi|} \brak{\xi}^{2r} \ud y \ud \eta \ud \xi 
\notag\\
&= T_{\Bcal}^{(1)} + T_{\Bcal}^{(2)} + T_{\Bcal}^{(3)} + T_{\Bcal}^{(4)} + T_{\Bcal}^{(5)} + T_{\Bcal}^{(6)}
\label{eq:Magic:0}
\end{align}
We bound the six terms above individually, and note that the second term, $T_{\Bcal}^{(2)}$, is the most involved one, as it involves analyzing the four terms arising from the $A_\xi$ evolution in \eqref{eq:A:evo}.

We bound the most difficult term first. Combining \eqref{eq:Magic:0} and \eqref{eq:A:evo} we rewrite
\begin{align}
T_{\Bcal}^{(2)} 
&=  -i \int_{\RR}   \int_0^\infty \left(\theta_\xi -1\right) \xi c_{\theta,\xi}  A_\xi \overline{\bar w_\xi} \rho^2 e^{2\tau|\xi|} \brak{\xi}^{2r} \ud y   \ud \xi \notag\\
&\qquad -i \int_{\RR}   \int_0^\infty \left(\theta_\xi -1\right)  \xi |\xi|  A_\xi \overline{\bar w_\xi} \rho^2 e^{2\tau|\xi|} \brak{\xi}^{2r} \ud y   \ud \xi \notag\\
&\qquad + i \int_{\RR}   \int_0^\infty \left(\theta_\xi -1\right) \xi I_\infty[\bar w_\xi] \overline{\bar w_\xi} \rho^2 e^{2\tau|\xi|} \brak{\xi}^{2r} \ud y   \ud \xi 
\notag\\
&\qquad -i  \int_{\RR} \int_{\RR}  \int_0^\infty \left(\theta_\xi -1\right) A_\eta A_{\xi-\eta} (\xi-\eta)  \overline{\bar w_\xi} \rho^2 e^{2\tau|\xi|} \brak{\xi}^{2r} \ud y \ud\eta  \ud \xi 
\notag\\
&= T_{\Bcal}^{(2,1)} + T_{\Bcal}^{(2,2)} + T_{\Bcal}^{(2,3)} + T_{\Bcal}^{(2,4)} \,.
\label{eq:Magic:2:0}
\end{align}
Using the definition of $c_{\theta,\xi}$ in \eqref{c:theta:xi} and the bound \eqref{theta:rho:bound} for the $L^2_y$ norm of $\rho \left(\theta_\xi -1\right)$, we   obtain
\begin{align}
\abs{T_{\Bcal}^{(2,1)}} \les  \norm{\bar w}_{\tau,r} \norm{A}_{\tilde{\tau,r}}\,.
\label{eq:Magic:2:1}
\end{align}
For the second term, we use the one dimensional Hardy inequality $\norm{f/y}_{L^2_y} \les \norm{\p_y f}_{L^2_y} \les \norm{\rho \p_y f}_{L^2_y}$, valid for $f$ such that $f|_{y=0}$ since $\rho \geq 1$, and the bound
\begin{align}
\brak{\xi}^{3/2} \norm{y (\theta_\xi-1) \rho^2}_{L^2_y} 
&\les \brak{\xi}^{3/2} \norm{ y e^{-\frac{y^2 \brak{\xi}^2}{4(1+t/\eps)}}}_{L^2_y} \les   \brak{\xi}^{1/2}  \norm{e^{-\frac{y^2 \brak{\xi}^2}{8(1+t/\eps)}}}_{L^2_y}
\les 1  
\label{eq:strangely:good}
\end{align}
which follows similarly to \eqref{theta:rho:bound}, and obtain
\begin{align}
\abs{T_{\Bcal}^{(2,2)}} 
&\leq \int_{\RR}   \int_0^\infty |\xi|^{3/2} \abs{y \rho^2 \left(\theta_\xi -1\right)}  |\xi|^{1/2}  \abs{A_\xi} \abs{\frac{\overline{\bar w_\xi}}{y}}  e^{2\tau|\xi|} \brak{\xi}^{2r} \ud y   \ud \xi
\notag\\
&\les  \norm{\abs{\p_x}^{1/2} A}_{\tau,r} \norm{\p_y \bar w}_{\tau,r} \,.
\label{eq:Magic:2:2}
\end{align}
For the third term on the right side of \eqref{eq:Magic:2:0} we simply appeal to \eqref{theta:rho:bound} and \eqref{eq:v:L:infty} to obtain
\begin{align}
\abs{T_{\Bcal}^{(2,3)}} 
&\les \int_{\RR} |\xi| \norm{\rho (\theta_\xi-1)}_{L^2_y} \abs{I_\infty[\bar w_\xi]} \norm{\rho \bar w_\xi}_{L^2_y} e^{2\tau|\xi|} \brak{\xi}^{2r} \ud\xi 
\notag\\
&\les  \int_{\RR} |\xi|^{1/2}  \norm{\rho \bar w_\xi}_{L^2_y}^2 e^{2\tau|\xi|} \brak{\xi}^{2r} d\xi 
\notag\\
&\les \norm{|\p_x|^{1/2} \bar w}_{\tau,r} \norm{\bar w}_{\tau,r} \,.
\label{eq:Magic:2:3}
\end{align}
Lastly, for the nonlinear term in \eqref{eq:Magic:2:0} we similarly have
\begin{align}
\abs{T_{\Bcal}^{(2,4)}} 
&\les \int_{\RR} \int_{\RR} |\xi-\eta| \norm{\rho (\theta_\xi-1)}_{L^2_y} \abs{A_\eta} \abs{A_{\xi-\eta}} \norm{\rho \bar w_\xi}_{L^2_y} e^{2\tau|\xi|} \brak{\xi}^{2r} \ud \eta \ud\xi 
\notag\\
&\les  \int_{\RR} |\xi-\eta| \left(\brak{\eta}^{r-1/2} + \brak{\xi-\eta}^{r-1/2}\right)  \abs{A_\eta} \abs{A_{\xi-\eta}}  \norm{\rho \bar w_\xi}_{L^2_y} e^{2\tau|\xi|} \brak{\xi}^{r+1/2} d\xi 
\notag\\
&\les  \norm{\brak{\p_x}^{1/2} \bar w}_{\tau,r}  \norm{|\p_x|^{1/2} A}_{\tilde{\tau,r}} \norm{A}_{\tilde{\tau,r}}\,.
\label{eq:Magic:2:4}
\end{align}
Combined, the bounds \eqref{eq:Magic:2:1}--\eqref{eq:Magic:2:4} yield an estimate for the $T_{\Bcal}^{(2)}$ term. Next, we estimate the remaining five terms on the right side of \eqref{eq:Magic:0}.

In order to bound $T_{\Bcal}^{(1)}$ 
we note from \eqref{eq:theta:def} that  
\begin{align*}
\abs{\partial_t \theta_\xi} 
&=  \frac{y^2 \brak{\xi}^2}{2 \eps (1+t/\eps)^2} e^{-\frac{y^2 \brak{\xi}^2}{2(1+t/\eps)}} 
\leq \frac{1}{\eps(1+t/\eps)} e^{-\frac{y^2 \brak{\xi}^2}{3(1+t/\eps)}} \les \frac{1}{\eps} e^{-\frac{y^2 \brak{\xi}^2}{3(1+t/\eps)}} 
\end{align*}
and
\begin{align*}
\abs{y \partial_{yy} \theta_\xi} 
\leq  \frac{y \brak{\xi}^2}{(1+t/\eps) } e^{-\frac{y^2 \brak{\xi}^2}{2(1+t/\eps)}} 
+ \frac{y^3 \brak{\xi}^4}{(1+t/\eps)^2} e^{-\frac{y^2 \brak{\xi}^2}{2(1+t/\eps)}}
\les  \brak{\xi} e^{-\frac{y^2 \brak{\xi}^2}{3(1+t/\eps)}} 
\end{align*}
from which we deduce
\begin{subequations}
\begin{align}
\|\rho \p_t \theta_\xi \|_{L^2_y} & \les \frac{1}{\eps  \brak{\xi}^{1/2}}      \label{theta:est:2}
\\
\|y \rho^2  \p_{yy} \theta_\xi \|_{L^2_y} &\les \brak{\xi}^{1/2}    \label{theta:est:1} 
\, .
\end{align}
\end{subequations}
With estimates \eqref{theta:est:1}--\eqref{theta:est:2}, the Hardy inequality and the fact that $\rho \geq 1$, we may estimate 
\begin{align*}
\abs{T_{\Bcal}^{(1)}} 
&\leq \int_{\RR} \abs{A_\xi} \norm{\rho \partial_t \theta_\xi}_{L^2_y} \norm{\rho \bar w_\xi}_{L^2_y} e^{2\tau|\xi|} \brak{\xi}^{2r} \ud \xi + \int_{\RR} \abs{A_\xi} \norm{y \rho^2 \partial_{yy} \theta_\xi}_{L^2_y} \norm{\frac{\bar w_\xi}{y}}_{L^2_y} e^{2\tau|\xi|} \brak{\xi}^{2r} \ud \xi
\notag \\
&\les   \norm{\bar w}_{\tau,r} \norm{A}_{\tilde{\tau,r}} +  \norm{\p_y \bar w}_{\tau,r} \norm{\brak{\p_x}^{1/2} A}_{\tilde{\tau,r}}  
\, .
\label{eq:Magic:1}
\end{align*}

In order to estimate the $T_{\Bcal}^{(3)}$ term in \eqref{eq:Magic:0}, we note that upon applying Lemma~\ref{lem:Hardy} to $ f = I_\infty[\bar w_\xi] - I_y[\bar w_\xi]$, we have that 
\begin{align}
\norm{\rho(y) \left(I_\infty[\bar w_\xi] - I_y[\bar w_\xi]\right)}_{L^2_y} 
&\les  \norm{\rho \bar w_\xi}_{L^2_y}
\end{align}
and thus
\begin{align}
\abs{T_{\Bcal}^{(3)}} 
\les  \int_{\RR} |\xi|  \norm{\rho \bar w_\xi}_{L^2_y}^2 e^{2\tau|\xi|} \brak{\xi}^{2r} \ud \xi
\les  \norm{\abs{\p_x}^{1/2} \bar w}_{\tau,r}^2\,.
\label{eq:Magic:3}
\end{align}

For the $T_{\Bcal}^{(4)}$ term in \eqref{eq:Magic:0}, we use \eqref{eq:strangely:good} to obtain that $\norm{y \rho (\theta_\xi-1)}_{L^2_y} \les (1+t/\eps)^{3/4} \brak{\xi}^{-3/2}$, and the estimate
\begin{align}
\norm{\rho (I_\infty[1-\theta_\xi] - I_y[1-\theta_\xi])}_{L^2_y} 
\les \norm{\int_y^\infty e^{-\frac{(y')^2 \brak{\xi}^2}{2(1+t/\eps)}} \ud y'}_{L^2_y} 
\les \frac{1}{\brak{\xi}^{3/2}}
\label{eq:PatriotsCheat} 
\end{align}
to conclude
\begin{align}
\abs{T_{\Bcal}^{(4)}} 
&\leq \int_{\RR} |\xi| \abs{A_\xi} \left( \norm{\rho y (1-\theta_\xi)}_{L^2_y} + \norm{\rho  (I_\infty[1-\theta_\xi] - I_y[1-\theta_\xi])}_{L^2_y} \right) \norm{\rho \bar w_\xi}_{L^2_y} e^{2\tau|\xi|} \brak{\xi}^{2r} \ud \xi 
\notag \\
&\les    \norm{\bar w}_{\tau,r} \norm{A}_{\tilde{\tau,r}} 
\, .
\label{eq:Magic:4}
\end{align}

Lastly, we turn to the two nonlinear terms in \eqref{eq:Magic:0}.  First, we use \eqref{theta:rho:bound} to estimate 
\begin{align*}
\norm{\rho \abs{\theta_\eta\theta_{\xi-\eta}-1}}_{L^2_y}
\leq \norm{\rho\abs{\theta_{\xi-\eta}-1}}_{L^2_y} + \norm{\rho \abs{\theta_\eta-1}}_{L^2_y} \les  \frac{1}{\brak{\xi-\eta}^{1/2}} +  \frac{1}{\brak{\eta}^{1/2}}
\end{align*}
and then use the bound
\begin{align*}
\brak{\xi}^{r} |\xi|^{1/2}  
\les \brak{\eta}^{r} |\eta|^{1/2}   + \brak{\xi-\eta}^{r} |\xi-\eta|^{1/2}  
\end{align*}
to conclude
\begin{align}
\abs{T_{\Bcal}^{(5)}} 
&\leq \int_{\RR} \int_{\RR} |\xi|^{1/2} \brak{\xi}^r \abs{A_\eta} \abs{A_{\xi-\eta}} \norm{\rho (\theta_{\eta} \theta_{\xi-\eta} -1)}_{L^2_y}  |\xi|^{1/2} \brak{\xi}^r \norm{\rho \bar w_\xi}_{L^2_y} e^{2\tau|\xi|}  \ud \eta \ud \xi 
\notag \\
&\les   \norm{|\p_x|^{1/2} \bar w}_{\tau,r} \norm{A}_{\tilde{\tau,r}}  \norm{|\p_x|^{1/2} A}_{\tilde{\tau,r}}
\, .
\label{eq:Magic:5}
\end{align}
For the last term in \eqref{eq:Magic:0}, we use that 
\begin{align*}
\norm{\rho\, I_y[\theta_\eta] \p_y \theta_{\xi-\eta}}_{L^2_y} \leq \norm{\rho y \p_y \theta_{\xi-\eta}}_{L^2_y} \les \norm{e^{\frac{y^2}{8(1+t/\eps)}} e^{-\frac{y^2 \brak{\xi-\eta}^2}{4(1+t/\eps)}}}_{L^2_y} \les \frac{1}{\langle \xi -\eta\rangle^{1/2}}
\end{align*}
and the triangle inequality $|\eta|^{1/2} \leq |\xi-\eta|^{1/2} + |\xi|^{1/2}$ to conclude that 
\begin{align}
\abs{T_{\Bcal}^{(6)}} 
&\leq \int_{\RR} \int_{\RR} |\eta|^{1/2} (|\xi-\eta|^{1/2} + |\xi|^{1/2}) \brak{\xi}^r \abs{A_\eta} \abs{A_{\xi-\eta}} \norm{\rho I_y[\theta_{\eta}] \p_y \theta_{\xi-\eta}}_{L^2_y}  \brak{\xi}^r \norm{\rho \bar w_\xi}_{L^2_y} e^{2\tau|\xi|}  \ud \eta \ud \xi 
\notag \\
&\les   \norm{\brak{\p_x}^{1/2} \bar w}_{\tau,r} \norm{A}_{\tilde{\tau,r}}  \norm{|\p_x|^{1/2} A}_{\tilde{\tau,r}}
\, .
\label{eq:Magic:6}
\end{align}
Upon collecting the bounds \eqref{eq:Magic:2:1}--\eqref{eq:Magic:2:4}, \eqref{eq:Magic:1}--\eqref{eq:Magic:3}, and \eqref{eq:Magic:4}--\eqref{eq:Magic:6}, we conclude the proof of \eqref{eq:Magic:*}.

\subsection{Error terms in the $\p_y \bar w$ energy}
The estimates in this section are very similar to those in Section~\ref{sec:bar:w:energy}, however, several modifications are in order: we are testing the equation with the conjugate of $\p_y  \bar w$, $\brak{\xi}^{2r}$ becomes $\brak{\xi}^{2r-1}$, and we may use that the cutoff $\chi$ vanishes near $\{ y=0\}$.

\subsubsection{The $T_{\p_y \Ncal}$ term}
From \eqref{eq:dy:nonlinear:terms:a} we see that $\p_y \Ncal_\xi(\bar w,\bar w) = \Ncal_\xi(\bar w,\p_y \bar w)$, and thus the estimates are very similar to the $T_{\Ncal}$ term. From \eqref{eq:dy:nonlinear:terms:a} and \eqref{eq:Kobe:1:a} we have
\begin{align*}
\abs{ T_{\p_y \Ncal} }
&\les \int_\RR \int_\RR \norm{\rho \p_y \bar w_\eta}_{L^2_y} \norm{\chi \rho \p_y \bar w_{\xi-\eta}}_{L^2_y} \norm{\chi \rho \p_y \bar w_\xi}_{L^2_y} \notag\\
&\qquad \qquad\qquad \qquad  \times \abs{\xi-\eta} ( \brak{\eta}^{r-1/2} + \brak{\xi-\eta}^{r-1/2}) \brak{\xi}^{r-1/2} e^{2\tau|\xi|} \ud \eta \ud \xi
\notag\\
&\quad + \int_\RR \int_\RR \norm{\rho  \bar w_\eta}_{L^2_y} \norm{\chi \rho \p_{yy} \bar w_{\xi-\eta}}_{L^2_y} \norm{\chi \rho \p_y \bar w_\xi}_{L^2_y} \notag\\
&\qquad \qquad\qquad \qquad  \times \abs{\eta} ( \brak{\eta}^{r-1/2} + \brak{\xi-\eta}^{r-1/2}) \brak{\xi}^{r-1/2} e^{2\tau|\xi|} \ud \eta \ud \xi
\notag\\
&\les \norm{\brak{\p_x}^{1/2} \chi \p_y \bar w}_{\tau,r-1/2}^2 \norm{\p_y \bar w}_{\tau,r}
\notag\\
&\quad +   \norm{\chi \p_{yy} \bar w}_{\tau,r-1/2}  \norm{\chi \p_{y} \bar w}_{\tau,r-1/2}   \norm{\brak{\p_x}^{1/2} \bar w}_{\tau,r}
\end{align*}
where we have used that $r>2$. The above estimate gives the proof of \eqref{eq:Barkley:@}.

\subsubsection{The $T_{\p_y \Lcal}$ term}
The term $T_{\p_y \Lcal}$ is defined via \eqref{eq:dy:nonlinear:terms:b} and \eqref{eq:Kobe:1:b}, and may be split into four terms $T_{\p_y \Lcal}^{(j)}$ with $j\in \{1,\ldots,4\}$, according to the four terms in the integrand of \eqref{eq:dy:nonlinear:terms:b}. For the first term we have
\begin{align}
\abs{T_{\p_y \Lcal}^{(1)}}
&\les \int_\RR \int_{\RR} \norm{\chi \rho \p_y \bar w_\eta}_{L^2_y} \abs{A_{\xi-\eta}} \norm{\chi \rho \p_y \bar w_\xi}_{L^2_y} |\xi-\eta| \brak{\xi}^{2r-1} e^{2\tau|\xi|} \ud \eta \ud \xi
\notag\\
&\les \norm{\chi \p_y \bar w}_{\tau,r-1/2}^2 \norm{|\p_x|^{1/2} A}_{\tilde{\tau,r}}
\, .
\label{eq:Jordan:@:1}
\end{align}
Using  $\chi\equiv 0$ on $[0,1]$, we have that $\chi(y) \les y^2 \chi(y)$, which we may combine with the pointwise bound 
$
\abs{y^2 \p_y \theta_\xi} \les   \brak{\xi}^{-1},
$
to estimate
\begin{align}
\abs{T_{\p_y \Lcal}^{(2)}}
&\les \int_\RR \int_{\RR} \norm{ \rho  \bar w_{\eta}}_{L^2_y} \norm{\chi y^2 \p_y \theta_{\xi-\eta}}_{L^\infty_y} \abs{A_{\xi-\eta}} \norm{\chi \rho \p_y \bar w_\xi}_{L^2_y} |\xi-\eta| \brak{\xi}^{2r-1} e^{2\tau|\xi|} \ud \eta \ud \xi
\notag\\
&\les  \norm{\chi \p_y \bar w}_{\tau,r-1/2} \norm{\bar w}_{\tau,r} \norm{A}_{\tilde{\tau,r}}
\,.
\label{eq:Jordan:@:2}
\end{align}
For the third term in the definition of $T_{\p_y \Lcal}$ we have
\begin{align}
\abs{T_{\p_y \Lcal}^{(3)}}
&\les \int_\RR \int_{\RR} \norm{\chi \rho \p_y \bar w_{\xi-\eta}}_{L^2_y} \abs{A_{\eta}} \norm{\chi \rho \p_y \bar w_\xi}_{L^2_y} |\xi-\eta| \brak{\xi}^{2r-1} e^{2\tau|\xi|} \ud \eta \ud \xi
\notag\\
&\les \norm{\chi |\p_x|^{1/2} \p_y \bar w}_{\tau,r-1/2}  \norm{\chi \brak{\p_x}^{1/2} \p_y \bar w}_{\tau,r-1/2} \norm{A}_{\tilde{\tau,r}}
\,.
\label{eq:Jordan:@:3}
\end{align}
For the last term, we use again that $\chi(y) \les y^2 \chi(y)$, and that similarly to \eqref{theta:est:1} (and the equation two lines above it) we have 
\[
\abs{\rho^2 y^2 \p_{yy} \theta_\xi} \les \Big(y^2 \brak{\xi}^2 + y^4 \brak{\xi}^4 \Big) e^{\frac{y^2}{4(1+t/\eps)}} e^{-\frac{y^2 \brak{\xi}^2}{2(1+t/\eps)}} \les e^{-\frac{y^2 (\brak{\xi}^2-1)}{4(1+t/\eps)}} \les 1
\]
which implies
\begin{align}
\abs{T_{\p_y \Lcal}^{(4)}}
&\les \int_\RR \int_{\RR} \norm{I_y[\bar w_{\eta}]}_{L^\infty_y} \abs{A_{\xi-\eta}} \norm{\rho^{-1}}_{L^2_y} \norm{\chi y^2 \rho^2 \p_{yy} \theta_{\xi-\eta}}_{L^\infty_y} \norm{\chi \rho \p_y \bar w_\xi}_{L^2_y} |\eta| \brak{\xi}^{2r-1} e^{2\tau|\xi|} \ud \eta \ud \xi
\notag\\
&\les  \int_\RR \int_{\RR} \norm{\rho \bar w_{\eta}}_{L^2_y} \abs{A_{\xi-\eta}}  \norm{\chi \rho \p_y \bar w_\xi}_{L^2_y} |\eta| \brak{\xi}^{2r-1} e^{2\tau|\xi|} \ud \eta \ud \xi
\notag\\
&\les  \norm{\chi \p_y \bar w}_{\tau,r-1/2} \norm{\abs{\p_x}^{1/2} \bar w}_{\tau,r} \norm{A}_{\tilde{\tau,r}} \,.
\label{eq:Jordan:@:4}
\end{align}
Combining the  bounds  \eqref{eq:Jordan:@:1}, \eqref{eq:Jordan:@:2}, \eqref{eq:Jordan:@:3}, and \eqref{eq:Jordan:@:4}, we obtain the proof of \eqref{eq:Jordan:@}. 

\subsubsection{The $T_{\p_y \Mcal}$ term}
The $T_{\p_y \Mcal}$ terms is defined via \eqref{eq:dy:nonlinear:terms:c} and \eqref{eq:Kobe:1:c} as $T_{\p_y \Mcal}^{(1)} + T_{\p_y \Mcal}^{(2)}$, where the decomposition is between the two terms in the integrand of \eqref{eq:dy:nonlinear:terms:c}. For the first term we use that $\abs{\theta_\eta} \leq 1$ to obtain
\begin{align}
\abs{T_{\p_y \Mcal}^{(1)}}
&\les \int_{\RR} \int_{\RR}    |\eta| |A_\eta|   \norm{\chi \rho \p_y \bar w_{\xi-\eta}}_{L^2_y} \norm{\chi \rho \p_y \bar w_{\xi}}_{L^2_y} \brak{\xi}^{2r-1} e^{2\tau |\xi|}  \ud \eta \ud \xi
\notag\\
&\les \norm{\chi \p_y \bar w}_{\tau,r-1/2}^2 \norm{\abs{\p_x}^{1/2} A}_{\tilde{\tau,r}}
\label{eq:Jordan:@:5}
\end{align}
while for the second term we appeal to $\abs{\theta_\eta} \leq y$, which gives
\begin{align}
\abs{T_{\p_y \Mcal}^{(2)}}
&\les \int_{\RR} \int_{\RR}    |\eta| |A_\eta|   \norm{\chi \rho \p_{yy} \bar w_{\xi-\eta}}_{L^2_y} \norm{y \chi \rho \p_y \bar w_{\xi}}_{L^2_y} \brak{\xi}^{2r-1} e^{2\tau |\xi|}  \ud \eta \ud \xi
\notag\\
&\les \norm{y \chi \p_y \bar w}_{\tau,r-1/2} \norm{\chi \p_{yy} \bar w}_{\tau,r-1/2} \norm{\abs{\p_x}^{1/2} A}_{\tilde{\tau,r}}
\, .
\label{eq:Jordan:@:6}
\end{align}
Combining \eqref{eq:Jordan:@:5} and \eqref{eq:Jordan:@:6} we obtain the proof of \eqref{eq:Jordan:@:*}.

\subsubsection{The $T_{\p_y \Bcal}$ term}
According to \eqref{eq:dy:nonlinear:terms:d} and \eqref{eq:Kobe:1:d} we write 
\[
T_{\p_y \Bcal} = \sum_{j=1}^{6} T_{\p_y \Bcal}^{(j)},
\]
where the decomposition is according to the six terms in \eqref{eq:dy:nonlinear:terms:d}. 

Since we are doing estimates on the support of $\chi$, i.e. for $y\geq 1$, $y$-derivatives of the lift function $\theta_\xi$ can be made arbitrarily small on this region, resulting in simpler estimates.
For instance, similarly to \eqref{theta:est:2}--\eqref{theta:est:1}, we may show that 
\begin{align*}
\|\chi \rho^2 \p_t \p_y \theta_\xi \|_{L^\infty_y} &\les \|  \rho^2 y \p_t \p_y \theta_\xi \|_{L^\infty_y}\les \frac{1}{\eps }   
\\
\|\chi \rho^2 \p_{yyy} \theta_\xi \|_{L^\infty_y} &\les \|  \rho^2 y^3 \p_{yyy} \theta_\xi \|_{L^\infty_y} \les 1
\end{align*}
and therefore
\begin{align}
\abs{T_{\p_y \Bcal}^{(1)}}
&\les \int_{\RR} \abs{A_\xi} \|\chi \rho^2 \p_t \p_y \theta_\xi \|_{L^\infty_y} \norm{\rho^{-1}}_{L^2_y} \norm{\chi \rho \p_y \bar w_\xi}_{L^2_y} \brak{\xi}^{2r-1} e^{2\tau |\xi|}  \ud \xi \notag\\
&\qquad +  \int_{\RR} \abs{A_\xi} \|\chi \rho^2 \p_{yyy} \theta_\xi \|_{L^\infty_y} \norm{\rho^{-1}}_{L^2_y} \norm{\chi \rho \p_y \bar w_\xi}_{L^2_y} \brak{\xi}^{2r-1} e^{2\tau |\xi|}  \ud \xi \notag\\
&\les  \frac{1}{\eps }   \norm{A}_{\tilde{\tau,r}} \norm{\chi \p_y\bar w}_{\tau,r-1/2}
\label{eq:Magic:@:1}
\end{align}
where we have used that $\eps \les 1$ and $t\les 1$. We may also directly estimate
\begin{align}
\abs{T_{\p_y \Bcal}^{(3)}}
\les \int_{\RR} \abs{\xi}   \norm{\rho \bar w_\xi}_{L^2_y}  \norm{\chi \rho \p_y \bar w_\xi}_{L^2_y} \brak{\xi}^{2r-1} e^{2\tau |\xi|}  \ud \xi 
\les   \norm{\chi \p_y\bar w}_{\tau,r-1/2} \norm{\abs{\p_x}^{1/2} \bar w}_{\tau,r}
\label{eq:Magic:@:3}
\end{align}
and similarly to \eqref{eq:Magic:@:1} we have
\begin{align}
\abs{T_{\p_y \Bcal}^{(4)}}
&\les \int_{\RR} \abs{A_\xi} \abs{\xi} \|\chi \rho^2 y \p_y \theta_\xi \|_{L^\infty_y} \norm{\rho^{-1}}_{L^2_y} \norm{\chi \rho \p_y \bar w_\xi}_{L^2_y} \brak{\xi}^{2r-1} e^{2\tau |\xi|}  \ud \xi \notag\\
&\les    \norm{\abs{\p_x}^{1/2} A}_{\tilde{\tau,r}} \norm{\chi \p_y\bar w}_{\tau,r-1/2}
\,.
\label{eq:Magic:@:4}
\end{align} 
It remains to treat the $\partial_t A_\xi$ and the nonlinear terms in \eqref{eq:dy:nonlinear:terms:d}.

For the $\partial_t A_\xi$ contribution, namely $T_{\p_y \Bcal}^{(2)}$, we need to use a decomposition that is analogous to \eqref{eq:Magic:2:0}. The main difference is that the $\theta_\xi-1$ are now replaced by $\p_y \theta_\xi$, and as mentioned earlier, $\chi(y) \abs{\p_y \theta_\xi}$ can be made arbitrarily small. In particular, we may use the bound
\begin{align}
\brak{\xi}^j \norm{\chi \rho^2 \p_y \theta_\xi}_{L^\infty_y} \les \brak{\xi}^j  \norm{  \rho^2 y^{1+j} \p_y \theta_\xi}_{L^\infty_y} \les 1,
\label{eq:Celtics:no:good}
\end{align} 
combined with \eqref{eq:A:evo} to estimate
\begin{align}
\abs{T_{\Bcal}^{(2)}}
&\les \int_{\RR}  \abs{\partial_t A_\xi} \norm{\chi \rho^2 \p_y\theta_\xi}_{L^\infty_y} \norm{\rho^{-1}}_{L^2_y} \norm{\chi \rho \p_y \bar w_\xi}_{L^2_y} \brak{\xi}^{2r-1} e^{2\tau |\xi|}  \ud \xi 
\notag\\
&\les  \int_{\RR}  \abs{\xi} c_{\theta,\xi}  \abs{A_\xi} \norm{\chi \rho \p_y \bar w_\xi}_{L^2_y} \brak{\xi}^{2r-1} e^{2\tau |\xi|}  \ud \xi 
+  \int_{\RR}  \frac{1}{\brak{\xi}}   \abs{\xi}^2 \abs{A_\xi} \norm{\chi \rho \p_y \bar w_\xi}_{L^2_y} \brak{\xi}^{2r-1} e^{2\tau |\xi|}  \ud \xi 
\notag\\
&\qquad + \int_{\RR}  \frac{\abs{\xi}}{\brak{\xi}^{1/2}}    \abs{I_\infty[\bar w_\xi]} \norm{\chi \rho \p_y \bar w_\xi}_{L^2_y} \brak{\xi}^{2r-1} e^{2\tau |\xi|}  \ud \xi
\notag\\
&\qquad + \int_{\RR}\int_{\RR}  \frac{|\xi|}{\brak{\xi}}  \abs{A_\eta} \abs{A_{\xi-\eta}} \norm{\chi \rho \p_y \bar w_\xi}_{L^2_y} \brak{\xi}^{2r-1} e^{2\tau |\xi|}  \ud \eta \ud \xi
\notag\\
&\les  \norm{\brak{\p_x}^{1/2} A}_{\tilde{\tau,r}} \norm{\chi \p_y \bar w}_{\tau,r-1/2} + \norm{\bar w}_{\tau,r} \norm{\chi \p_y \bar w}_{\tau,r-1/2}   + \norm{A}_{\tilde{\tau,r}}^2 \norm{\chi \p_y \bar w}_{\tau,r-1/2}
\, .
\label{eq:Magic:@:2}
\end{align}

Lastly, for the two nonlinear contributions, arising due to the the last two terms in \eqref{eq:dy:nonlinear:terms:d}, we again appeal to \eqref{eq:Celtics:no:good} and estimate
\begin{align}
\abs{T_{\Bcal}^{(5)}}
&\les
\int_{\RR}\int_{\RR}   |\xi-\eta| \abs{A_\eta} \abs{A_{\xi-\eta}} \norm{\chi \rho^2 \p_y \theta_{\xi-\eta}}_{L^\infty_y} \norm{\rho^{-1}}_{L^2_y} \norm{\chi \rho \p_y \bar w_\xi}_{L^2_y} \brak{\xi}^{2r-1} e^{2\tau |\xi|}  \ud \eta \ud \xi
\notag\\
&\les \int_{\RR}\int_{\RR}   \abs{A_\eta} \abs{A_{\xi-\eta}}   \norm{\chi \rho \p_y \bar w_\xi}_{L^2_y} \brak{\xi}^{2r-1} e^{2\tau |\xi|}  \ud \eta \ud \xi
\notag\\
&\les  \norm{A}_{\tilde{\tau,r}}^2 \norm{\chi \p_y \bar w}_{\tau,r-1/2}
\label{eq:Magic:@:5}
\end{align}
and
\begin{align}
\abs{T_{\Bcal}^{(6)}}
&\les
\int_{\RR}\int_{\RR}   |\eta| \abs{A_\eta} \abs{A_{\xi-\eta}} \norm{\chi \rho^2 y \p_{yy} \theta_{\xi-\eta}}_{L^\infty_y} \norm{\rho^{-1}}_{L^2_y} \norm{\chi \rho \p_y \bar w_\xi}_{L^2_y} \brak{\xi}^{2r-1} e^{2\tau |\xi|}  \ud \eta \ud \xi
\notag\\
&\les  \int_{\RR}\int_{\RR} |\eta|  \abs{A_\eta} \abs{A_{\xi-\eta}}   \norm{\chi \rho \p_y \bar w_\xi}_{L^2_y} \brak{\xi}^{2r-1} e^{2\tau |\xi|}  \ud \eta \ud \xi
\notag\\
&\les \norm{\abs{\p_x}^{1/2} A}_{\tilde{\tau,r}}  \norm{A}_{\tilde{\tau,r}}  \norm{\chi \p_y \bar w}_{\tau,r-1/2} \, .
\label{eq:Magic:@:6}
\end{align}
By summing the bounds \eqref{eq:Magic:@:1}--\eqref{eq:Magic:@:4}, and \eqref{eq:Magic:@:2}--\eqref{eq:Magic:@:6}, we obtain the bound \eqref{eq:Magic:@}.

\appendix
\section{Review of the incompressible Triple Deck over a flat 2D plate}
\label{sec:appendix}
We roughly follow the presentation from~\cite{Smith82}. We first introduce the variables 
\begin{align} \label{scalings}
  X =  \frac{x-1}{\nu^{3/8}}, \qquad \bar Y = \frac{y}{\nu^{1/2}}, \qquad  Y = \frac{y}{\nu^{5/8}},  \qquad  \tilde Y = \frac{y}{\nu^{3/8}}, \qquad  T = \frac{t}{\nu^{1/4}}  \, .
\end{align}
Here $\bar{Y}$, $Y$, and $\tilde{Y}$ are the fast vertical variables in the main deck, lower deck, and upper deck, respectively. The $X$ variable is the fast variable in the vicinity of the trailing edge, situated at $x=1, y=0$. 
On the fast time scale $T$, to leading order only perturbances in the lower deck are active, while the other decks the fast time dependence does not enter the momentum equation. Throughout this section we abuse notation and write $\nu$ instead of an inverse Reynolds number, i.e. we treat $\nu$ as if it is dimensionless.

\subsection{Main deck}
The ansatz on the solution of the 2D Navier-Stokes equation in this region is 
\begin{align}
(u_M,v_M,p_M) = \left( U_B(\bar Y) + \nu^{\frac 1 8}   u_1(X,\bar  Y, T), \nu^{\frac 1 4}   v_1(  X,\bar  Y, T), \nu^{\frac 1 4}   p_1(X,\bar  Y,T) \right) + \mbox{lower order terms}. 
\label{eq:main:deck:ansatz}
\end{align}

\noindent Above, $U_B$ is defined to be the Blasius boundary layer, introduced in \eqref{Blasius.a} as $U_B:= f'(\frac{\bar{Y}}{\sqrt{x}})$, where $f$ solves \eqref{Blasius.b}. To ease notation, we suppress the $x$ dependence of $U_B$, and denote by $U_B' = \p_{\bar{Y}}U_B$, since our scaling ensures that we are very close to $x = 1$. Inserting ansatz \eqref{eq:main:deck:ansatz} into the 2D Navier-Stokes equations and collecting only the leading order terms we obtain the {\em inviscid type system}
\begin{subequations}
\label{eq:main:deck}
\begin{align}
U_B \partial_{X}   u_1 +   v_1  U_B' &= 0\\
\partial_{\bar Y} p_1 &= 0 \\
\partial_{X}   u_1 + \partial_{\bar Y}   v_1 &= 0
\, .
\end{align}
\end{subequations}
Note that both the time derivative and the dissipation term in the tangential momentum equation drop  out, as they are lower order in $\nu$.
The system \eqref{eq:main:deck} has as a solution
\begin{align}
  u_1= A(X,T) U_B'(\bar Y), \qquad   v_1 = - \partial_{X}A(X,T) U_B(\bar Y), \qquad   p_1 = P(X,T) \,,
  \label{eq:main:deck:sol}
\end{align}
for some unknown functions $A(X,T)$ and $P(X,T)$. Note that the solution \eqref{eq:main:deck:sol} satisfies the boundary condition $u_1|_{\bar{Y} \to \infty} = 0$. This type of  matching condition enforces, from \eqref{eq:main:deck:ansatz}, that the horizontal velocity in the main deck  converges rapidly to the ambient Blasius flow, which is what is observed when the boundary layer separates. 

The boundary condition 
\begin{align*}
A(X, T) \rightarrow 0 \text{ as } X \rightarrow -\infty
\end{align*}
ensures that, approaching from the left, i.e. as $ x \to 1^{-}$ the main deck profile matches with the Blasius  boundary layer profile $U_B\left( \frac{\bar Y}{\sqrt{x}} \right)$. Therefore, at the lateral boundary $x=1^-$ in original variables, which is the same as the boundary $X \to -\infty$ in rescaled variables (as $\nu \to 0$).

\subsection{Lower deck}

Notice now that the main deck, \eqref{eq:main:deck}, contributes a non-zero trace onto $\bar{Y} = 0$, which needs to be adjusted. Thus, it does not suffice to take the main deck as the full flow, as it does not satisfy the no-slip boundary condition. This is the purpose of introducing the lower deck. More precisely, we compute 
\begin{align} \n
u_M|_{\bar{Y} \to 0} &=  U_B(\bar{Y}) + \nu^{\frac 1 8} u_1(X, \bar{Y}, T) \sim \bar{Y} U_B'(0) + \nu^{\frac 1 8} A(X,T) U_B'(0) \\ \label{B:trace}
&\sim  \nu^{\frac 1 8} U_B'(0) (\nu^{-\frac 1 8} \bar{Y} + A(X,T)) \sim \nu^{\frac 1 8} U_B'(0) (Y + A(X,T)),
\end{align}

\noindent where we have used the scaling $Y = \nu^{-\frac 1 8} \bar{Y}$ which relates the lower deck scaling and main deck/ Prandtl scaling. This then suggests that in order to correct for the boundary trace, \eqref{B:trace}, we need to seek a lower deck expansion in of magnitude $\nu^{\frac 1 8}$. The ansatz on the solution $(u_L,v_L,p_L)$ of the 2D Navier-Stokes equation in this region is thus
\begin{align}
(u_L,v_L,p_L) = \left(\nu^{\frac 1 8} U (X, Y, T), \nu^{\frac 3 8} V( X, Y, T), \nu^{\frac 1 4}  P( X, Y, T) \right) + \mbox{lower order terms} 
\label{eq:lower:deck:ansatz}
\,.
\end{align}
Inserting ansatz \eqref{eq:lower:deck:ansatz} into the 2D Navier-Stokes equations and collecting only the leading order terms we obtain the {\em boundary layer type system}
\begin{subequations}
\label{eq:lower:deck}
\begin{align}
\partial_{T}   U +   U \partial_{X}   U +   V \partial_{Y}  U + \partial_{X} P - \partial_{Y}^2 U &= 0\\
\partial_{Y} P &= 0 \\
\partial_{X}  U + \partial_{Y} V &= 0
\, .
\end{align}
\end{subequations}
Matching the tangential velocity as $Y\to \infty$ in \eqref{eq:lower:deck:ansatz}, with the tangential velocity as $\bar Y \to 0$ in \eqref{B:trace}, we arrive at the boundary condition (for simplicity, take $U_B'(0) = 1$)
\begin{align}
\lim_{Y \to \infty} \left( U(X, Y, T) -  Y \right)= A(X, T)
\label{eq:lower:deck:BC:1}
\,.
\end{align}
On the other hand, at the boundary of the plate we impose Dirichlet boundary conditions
\begin{align}
U(X,0,T) = V(X,0,T) = 0 \, .
\label{eq:lower:deck:BC:2}
\end{align}

\subsection{Upper deck}
The flow in this region is to leading order of steady potential inviscid type. That is, the leading order is an Euler flow, which takes as argument the unscaled variables, $(t, x, y)$, from \eqref{scalings}. In comparison to the perturbations, which in this deck are functions of $(X, \tilde{Y})$, every Euler flow fluctuates slowly, and so, without loss of generality, we take the outer Euler flow to be the constant shear flow $(1, 0)$. Thiss yields the ansatz
\begin{align}
(u_U,v_U,p_U) = \left(1 + \nu^{\frac 1 4} u_2(X, \tilde Y, T), \nu^{\frac 1 4} v_2( X,\tilde Y, T), \nu^{\frac 1 4} p_2( X,\tilde Y, T) \right) + \mbox{lower order terms} 
\, .
\label{eq:upper:deck:ansatz}
\end{align}
Inserting this ansatz into the 2D Navier-Stokes equations and collecting only the leading order terms we obtain the {\em potential type system} 
\begin{subequations}
\label{eq:upper:deck}
\begin{align}
\partial_X u_2 + \partial_X p_2&= 0\\
\partial_X v_2 + \partial_{\tilde Y} p_2 &= 0 \\
\partial_{X} u_2 + \partial_{\tilde Y} v_2 &= 0
\, .
\end{align}
\end{subequations}
The matching condition at $\tilde Y = 0$ with the flow in the main deck as $\bar Y \to \infty$ requires that
\begin{align}
p_2(X,0,T) = P(X,T), \qquad v_2(X,0,T) = - \partial_X A(X,T) 
\label{eq:upper:deck:BC}
\end{align}
which, according to \eqref{eq:main:deck}, cancels out the normal velocity component from the main deck. Matching the upper deck velocity with the outer Euler solution as $\tilde Y \to \infty$ yields
\begin{align*}
\lim_{\tilde Y \to + \infty} p_2(X,\tilde Y ,T) = \lim_{\tilde Y \to + \infty} v_2(X,\tilde Y ,T) = 0 \, .
\end{align*}

\subsection{The closed coupled system}

From \eqref{eq:upper:deck} we deduce that the pressure $p_2$ and the normal velocity $v_2$ are harmonic in the variables $X, \tilde Y$. But one can say more: the functions $p_2$ and $v_2$ are harmonic conjugates. Therefore, we may view $v_2$ as the real part of an analytic function, and $p_2$ as its imaginary part. Therefore, their traces at the boundary of the half space are related via the Hilbert transform
\begin{align}
P(X,T) = p_2(X,0,T) = H v_2(X,0,T) = - (H \p_X) A(X,T) = \frac{1}{\pi} p.v. \int_{\RR} \frac{\partial_{X'} A(X')}{X - X'} \ud X' 
\label{eq:lower:deck:BC:3}
\end{align}
which concludes the proof of \eqref{eq:TD:A:p}. Recall that $- H \p_X = \abs{\p_X}$. Therefore, the system \eqref{eq:lower:deck} together with the boundary conditions \eqref{eq:lower:deck:BC:1}, \eqref{eq:lower:deck:BC:2}, and \eqref{eq:lower:deck:BC:3} form a closed evolution system, which is called the Triple Deck model. Once the solution in the lower deck is determined, we derive from \eqref{eq:main:deck:sol} the leading order solution in the main deck, while from \eqref{eq:upper:deck:BC} and harmonic extension in the upper half space, we determine the leading order solution in the upper deck.

\subsection{The scalings}

Now that we have presented the derivation of the model, we briefly discuss the idea behind the scalings \eqref{scalings}. We roughly follow the exposition in \cite{Lagree}. \textit{a-priori}, one wants to rescale the $x$ variable near the point of separation, which may physically correspond to the trailing edge of a flat plate, or if a plate were to have a disturbance. In our presentation, $x = 1$ is this point. To achieve this, one introduces a fast, horizontal variable. Next, one scales the magnitude of the deviation from Blasius in the main deck, which represents the separation effect. Summarizing the starting point: 
\begin{align} \label{ans:2}
X = \frac{x - 1}{L}, \qquad u_M = U_B(\bar{Y}) + \ell u_1(X, \bar{Y}, T), \qquad v_M = \frac{\sqrt{\nu} \ell}{L} v_1(X, \bar{Y}, T). 
\end{align}
for scalings $L, \ell$ to be determined in terms of $\nu$. An inspection of \eqref{scalings} shows that $L = \nu^{\frac 3 8}$ and $\ell = \nu^{\frac 1 8}$.

In the next step, one introduces a lower deck to cancel out the boundary contribution from the main deck. This contribution, the content of \eqref{B:trace}, is now of the form 
\begin{align*}
u_M|_{\bar{Y} \to 0} \sim \bar{Y} U_B'(0) + \ell A(X,T) U_B'(0) \sim \ell U_B'(0) (Y + A(X,T)), 
\end{align*}
where the lower deck fast variable and magnitude are now 
\begin{align*}
Y := \ell^{-1} \bar{Y}, \qquad u_L := \ell U.
\end{align*}
In the lower deck step, equating the convection $u_L \p_x$ and the viscosity $\nu \p_{yy}$ just as in the standard Prandtl theory gives the relation 
\begin{align*}
\ell^2 L^{-1} \sim u_L \p_x u_L  = \nu \p_{yy} u_L \sim \nu \frac{1}{\nu} \frac{1}{\ell^2} \ell, 
\end{align*}
which gives $\ell^3 = L$. The final physical determination comes from the upper deck. One realizes that the contribution of $v_M$ at the top of the Main Deck needs to match the order of the pressure, which is $\ell^2$. Thus,
\begin{align*}
\frac{\sqrt{\nu}}{ \ell^2} = \ell^2 \quad \Rightarrow \quad \ell = \nu^{\frac 1 8}, \quad L = \nu^{\frac 3 8} \, ,
\end{align*}
which concludes the scale analysis.

\subsection*{Acknowledgments} 
The authors are grateful to Stephen Childress for pointing them to a number of references concerning the physical origins of the Triple Deck. We also thank Klaus Widmayer for discussions concerning the Benjamin-Ono equation.
The work of S.I.  was   supported  by the NSF grant DMS-1802940.
The work of V.V. was partially supported by the NSF grant DMS-1911413.


\begin{bibdiv}
\begin{biblist}

\bib{AlexandreWangXuYang14}{article}{
      author={Alexandre, R.},
      author={Wang, Y.-G.},
      author={Xu, C.-J.},
      author={Yang, T.},
       title={Well-posedness of the {P}randtl equation in {S}obolev spaces},
        date={2014},
     journal={J. Amer. Math. Soc.},
}

\bib{BardosTiti13}{article}{
   author={Bardos, C.W.},
   author={Titi, E.S.},
   title={Mathematics and turbulence: where do we stand?},
   journal={J. Turbul.},
   volume={14},
   date={2013},
   number={3},
   pages={42--76},
   issn={1468-5248},
}

\bib{Blasius08}{article}{
      author={Blasius, H.},
       title={{G}renzschichten in {F}l\"ussigkeiten mit kleiner {R}eibung},
        date={1908},
     journal={Z. Math. Phys.},
      volume={56},
       pages={1\ndash 37},
}

\bib{BonaGrujicKalisch10}{article}{
   author={Bona, Jerry L.},
   author={Gruji\'{c}, Zoran},
   author={Kalisch, Henrik},
   title={A KdV-type Boussinesq system: from the energy level to analytic
   spaces},
   journal={Discrete Contin. Dyn. Syst.},
   volume={26},
   date={2010},
   number={4},
   pages={1121--1139},
}

\bib{Carter74}{inproceedings}{
      author={Carter, J.},
       title={Solutions for laminar boundary layers with separation and
  reattachment},
        date={1974},
   booktitle={7th fluid and plasmadynamics conference},
       pages={583},
}

\bib{CatherallMangler66}{article}{
      author={Catherall, D.},
      author={Mangler, K.W.},
       title={The integration of the two-dimensional laminar boundary-layer
  equations past the point of vanishing skin friction},
        date={1966},
     journal={Journal of Fluid Mechanics},
      volume={26},
      number={1},
       pages={163\ndash 182},
}

\bib{CebeciCousteix05}{book}{
      author={Cebeci, T.},
      author={Cousteix, J.},
       title={Modeling and computation of boundary layer flows},
   publisher={Springer},
     address={Cambridge},
        date={2005},
}

\bib{CollotGhoulIbrahimMasmoudi18}{article}{
      author={Collot, C.},
      author={Ghoul, T.E.},
      author={Ibrahim, S.},
      author={Masmoudi, N.},
       title={On singularity formation for the two dimensional unsteady
  Prandtl's system},
        date={2018},
     journal={arXiv:1808.05967},
}

\bib{CollotGhoulMasmoudi18}{article}{
      author={Collot, C.},
      author={Ghoul, T.E.},
      author={Masmoudi, N.},
       title={Singularity formation for Burgers equation with transverse
  viscosity},
        date={2018},
     journal={arXiv:1803.07826},
}

\bib{CousteixMauss07}{book}{
      author={Cousteix, J.},
      author={Mauss, J.},
       title={Asymptotic analysis and boundary layers},
      series={Scientific Computation},
   publisher={Springer},
     address={Berlin},
        date={2007},
        ISBN={978-3-540-46488-4},
        note={With a preface by Jean-Pierre Guiraud, Translated and extended
  from the 2006 French original},
}

\bib{DDLM18}{article}{
      author={Dalibard, A.-L.},
      author={Dietert, H.},
      author={G{\'e}rard-Varet, D.},
      author={Marbach, F.},
       title={High frequency analysis of the unsteady interactive boundary
  layer model},
        date={2018},
     journal={SIAM Journal on Mathematical Analysis},
      volume={50},
      number={4},
       pages={4203\ndash 4245},
}

\bib{DalibardMasmoudi18}{article}{
      author={Dalibard, A.-L.},
      author={Masmoudi, N.},
       title={Separation for the stationary {P}randtl equation},
        date={2018},
     journal={arXiv:1802.04039},
}

\bib{DiGV18}{article}{
      author={Dietert, H.},
      author={G{\'e}rard-Varet, D.},
       title={Well-posedness of the prandtl equations without any structural
  assumption},
        date={2019},
     journal={Annals of PDE},
      volume={8},
      number={6},
}

\bib{Duck87}{incollection}{
      author={Duck, P.W.},
       title={Unsteady triple-deck flows leading to instabilities},
        date={1987},
   booktitle={Boundary-layer separation},
   publisher={Springer},
       pages={297\ndash 312},
}

\bib{EEngquist97}{article}{
      author={E, W.},
      author={Engquist, B.},
       title={Blowup of solutions of the unsteady {P}randtl's equation},
        date={1997},
        ISSN={0010-3640},
     journal={Comm. Pure Appl. Math.},
      volume={50},
      number={12},
       pages={1287\ndash 1293},
}

\bib{FeiTaoZhang16}{article}{
      author={Fei, M.},
      author={Tao, T.},
      author={Zhang, Z.},
       title={On the zero-viscosity limit of the {N}avier-{S}tokes equations in
  the half-space},
        date={2016},
     journal={arXiv:1609.03778},
}

\bib{FeiTaoZhang18}{article}{
      author={Fei, N.},
      author={Tao, T.},
      author={Zhang, Z.},
       title={On the zero-viscosity limit of the {N}avier--{S}tokes equations
  in ${R}^3_+$ without analyticity},
        date={2018},
        ISSN={0021-7824},
     journal={J. Math. Pures Appl.},
      volume={112},
       pages={170 \ndash  229},
}

\bib{FoiasTemam89}{article}{
   author={Foias, C.},
   author={Temam, R.},
   title={Gevrey class regularity for the solutions of the Navier-Stokes
   equations},
   journal={J. Funct. Anal.},
   volume={87},
   date={1989},
   number={2},
   pages={359--369},
}

\bib{GarganoSammartinoSciacca09}{article}{
      author={Gargano, F.},
      author={Sammartino, M.},
      author={Sciacca, V.},
       title={Singularity formation for {P}randtl's equations},
        date={2009},
        ISSN={0167-2789},
     journal={Phys. D},
      volume={238},
      number={19},
       pages={1975\ndash 1991},
         url={http://dx.doi.org/10.1016/j.physd.2009.07.007},
}

\bib{GerardVaretDormy10}{article}{
      author={G{{\'e}}rard-Varet, D.},
      author={Dormy, E.},
       title={On the ill-posedness of the {P}randtl equation},
        date={2010},
        ISSN={0894-0347},
     journal={J. Amer. Math. Soc.},
      volume={23},
      number={2},
       pages={591\ndash 609},
         url={http://dx.doi.org/10.1090/S0894-0347-09-00652-3},
}

\bib{GerardVaretMaekawa18}{article}{
      author={G\'erard-Varet, D.},
      author={Maekawa, Y.},
       title={Sobolev stability of {P}randtl expansions for the steady
  {N}avier-{S}tokes equations},
        date={2019},
     journal={Arch. Ration. Mech. Anal.},
}

\bib{GerardVaretMaekawaMasmoudi16}{article}{
      author={G{{\'e}}rard-Varet, D.},
      author={Maekawa, Y.},
      author={Masmoudi, N.},
       title={{G}evrey stability of {P}randtl expansions for {2D}
  {N}avier-{S}tokes},
        date={2018},
      volume={167},
      number={13}
       pages={2531\ndash 2631},
     journal={Duke Math. J.},
}

\bib{GerardVaretMasmoudi13}{article}{
      author={G{{\'e}}rard-Varet, D.},
      author={Masmoudi, N.},
       title={Well-posedness for the {P}randtl system without analyticity or
  monotonicity},
        date={2015},
        ISSN={0012-9593},
     journal={Ann. Sci. \'{E}c. Norm. Sup\'er. (4)},
      volume={48},
      number={6},
       pages={1273\ndash 1325},
         url={https://doi.org/10.24033/asens.2270},
}

\bib{GVMV18}{article}{
      author={G\'{e}rard-Varet, D.},
      author={Masmoudi, N.},
      author={Vicol, V.},
       title={Well-posedness of the hydrostatic {N}avier-{S}tokes equations},
        date={2018},
     journal={arXiv:1804.04489},
}

\bib{GerardVaretNguyen12}{article}{
      author={G\'{e}rard-Varet, D.},
      author={Nguyen, T.},
       title={Remarks on the ill-posedness of the {P}randtl equation},
        date={2012},
     journal={Asymptotic Analysis},
      volume={77},
       pages={71\ndash 88},
}

\bib{Goldstein48}{article}{
      author={Goldstein, S.},
       title={On laminar boundary-layer flow near a position of separation},
        date={1948},
     journal={The Quarterly Journal of Mechanics and Applied Mathematics},
      volume={1},
      number={1},
       pages={43\ndash 69},
}

\bib{Grenier00}{article}{
      author={Grenier, E.},
       title={On the nonlinear instability of {E}uler and {P}randtl equations},
        date={2000},
        ISSN={0010-3640},
     journal={Comm. Pure Appl. Math.},
      volume={53},
      number={9},
       pages={1067\ndash 1091},
  url={http://dx.doi.org/10.1002/1097-0312(200009)53:9<1067::AID-CPA1>3.3.CO;2-H},
}

\bib{GrenierGuoNguyen14b}{article}{
      author={Grenier, E.},
      author={Guo, Y.},
      author={Nguyen, T.},
       title={Spectral stability of {P}randtl boundary layers: an overview},
        date={2015},
     journal={Analysis},
      volume={35},
      number={4},
       pages={343\ndash 355},
}

\bib{GrenierGuoNguyen14c}{article}{
      author={Grenier, E.},
      author={Guo, Y.},
      author={Nguyen, T.},
       title={Spectral instability of characteristic boundary layer flows},
        date={2016},
     journal={Duke Math. J.},
      volume={165},
      number={16},
       pages={3085\ndash 3146},
}

\bib{GrenierNguyen17}{article}{
      author={Grenier, E.},
      author={Nguyen, T.T.},
       title={On nonlinear instability of {P}randtl's boundary layers: the case
  of {R}ayleigh's stable shear flows},
        date={2017},
     journal={arXiv:1706.01282},
}

\bib{GrenierNguyen18a}{article}{
      author={Grenier, E.},
      author={Nguyen, T.T.},
       title={{$L^\infty$} instability of {P}randtl layers},
        date={2018},
     journal={arXiv:1803.11024},
}

\bib{GuoIyer18a}{article}{
      author={Guo, Y.},
      author={Iyer, S.},
       title={Steady {P}randtl layer expansions with external forcing},
        date={2018},
     journal={arXiv:1810.06662},
}

\bib{GuoIyer18}{article}{
      author={Guo, Y.},
      author={Iyer, S.},
       title={Validity of steady prandtl layer expansions},
        date={2018},
     journal={arXiv:1805.05891},
}

\bib{GuoNguyen11}{article}{
      author={Guo, Y.},
      author={Nguyen, T.},
       title={A note on {P}randtl boundary layers},
        date={2011},
        ISSN={0010-3640},
     journal={Comm. Pure Appl. Math.},
      volume={64},
      number={10},
       pages={1416\ndash 1438},
}

\bib{GuoNguyen14}{article}{
      author={Guo, Y.},
      author={Nguyen, T.T.},
       title={Prandtl boundary layer expansions of steady {N}avier--{S}tokes
  flows over a moving plate},
        date={2017},
        ISSN={2199-2576},
     journal={Annals of PDE},
      volume={3},
      number={1},
       pages={10},
         url={https://doi.org/10.1007/s40818-016-0020-6},
}

\bib{IgnatovaVicol16}{article}{
      author={Ignatova, M.},
      author={Vicol, V.},
       title={Almost global existence for the {P}randtl boundary layer
  equations},
        date={2016},
        ISSN={0003-9527},
     journal={Arch. Ration. Mech. Anal.},
      volume={220},
      number={2},
       pages={809\ndash 848},
         url={https://doi.org/10.1007/s00205-015-0942-2},
}

\bib{Klingenberg83}{thesis}{
      author={Klingenberg, C.},
       title={On the stability of triple deck flow},
        type={Ph.D. Thesis},
        date={1983},
}

\bib{KukavicaMasmoudiVicolWong14}{article}{
      author={Kukavica, I.},
      author={Masmoudi, N.},
      author={Vicol, V.},
      author={Wong, T.K.},
       title={On the local well-posedness of the {P}randtl and the hydrostatic
  {E}uler equations with multiple monotonicity regions},
        date={2014},
     journal={SIAM J. Math. Anal.},
      volume={46},
      number={6},
       pages={3865\ndash 3890},
}

\bib{KukavicaVicol13}{article}{
      author={Kukavica, I.},
      author={Vicol, V.},
       title={On the local existence of analytic solutions to the {P}randtl
  boundary layer equations},
        date={2013},
     journal={Commun. Math. Sci.},
      volume={11},
      number={1},
       pages={269\ndash 292},
}

\bib{KukavicaVicol09}{article}{
   author={Kukavica, I.},
   author={Vicol, V.},
   title={On the radius of analyticity of solutions to the three-dimensional
   Euler equations},
   journal={Proc. Amer. Math. Soc.},
   volume={137},
   date={2009},
   number={2},
   pages={669--677},
}

\bib{KukavicaVicolWang19}{article}{
      author={Kukavica, I.},
      author={Vicol, V.},
      author={F, Wang.},
       title={The inviscid limit for the {N}avier-{S}tokes equations with data
  analytic only near the boundary},
        date={2019},
     journal={arXiv:1904.04983},
}

\bib{KukavicaVicolWang17}{article}{
      author={Kukavica, I.},
      author={Vicol, V.},
      author={Wang, F.},
       title={The van {D}ommelen and {S}hen singularity in the {P}randtl
  equations},
        date={2017},
        ISSN={0001-8708},
     journal={Adv. Math.},
      volume={307},
       pages={288\ndash 311},
}

\bib{Lagree10}{incollection}{
      author={Lagr\'ee, P.-Y.},
       title={Interactive boundary layer ({IBL})},
        date={2010},
   booktitle={Asymptotic methods in fluid mechanics: survey and recent
  advances},
   publisher={Springer},
       pages={247\ndash 286},
}

\bib{Lagree}{article}{
      author={Lagree, P.Y},
       title={Notes on triple deck},
        date={2016},
  url={http://www.lmm.jussieu.fr/~lagree/COURS/CISM/TriplePont_CISM.pdf},
}

\bib{LeBalleur90}{incollection}{
      author={Le~Balleur, J.-C.},
       title={New possibilities of viscous-inviscid numerical techniques for
  solving viscous flow equations with massive separation},
        date={1990},
   booktitle={Numerical and physical aspects of aerodynamic flows iv},
   publisher={Springer},
       pages={71\ndash 96},
}

\bib{LevermoreOliver97}{article}{
   author={Levermore, C.D.},
   author={Oliver, M.},
   title={Analyticity of solutions for a generalized Euler equation},
   journal={J. Differential Equations},
   volume={133},
   date={1997},
   number={2},
   pages={321--339},
}

\bib{LiYang16}{article}{
      author={Li, W.-X.},
      author={Yang, T.},
       title={Well-posedness in {G}evrey space for the {P}randtl equations with
  non-degenerate critical points},
        date={2016},
     journal={arXiv:1609.08430},
}

\bib{Lighthill53}{article}{
      author={Lighthill, M.J.},
       title={On boundary-layer and upstream influence : Ii. supersonic flows
  without separation},
        date={1953},
     journal={Proc. R. Soc., Ser. A},
      number={217},
       pages={478\ndash 507},
}

\bib{LiuYang17}{article}{
      author={Liu, C.-J.},
      author={Yang, T.},
       title={Ill-posedness of the {P}randtl equations in {S}obolev spaces
  around a shear flow with general decay},
        date={2017},
     journal={J. Math. Pures Appl.},
      volume={108},
      number={2},
       pages={150\ndash 162},
}

\bib{LombardoCannoneSammartino03}{article}{
      author={Lombardo, M.C.},
      author={Cannone, M.},
      author={Sammartino, M.},
       title={Well-posedness of the boundary layer equations},
        date={2003},
        ISSN={0036-1410},
     journal={SIAM J. Math. Anal.},
      volume={35},
      number={4},
       pages={987\ndash 1004 (electronic)},
}

\bib{Maekawa14}{article}{
      author={Maekawa, Y.},
       title={On the inviscid limit problem of the vorticity equations for
  viscous incompressible flows in the half-plane},
        date={2014},
        ISSN={1097-0312},
     journal={Comm. Pure Appl. Math.},
      volume={67},
      number={7},
       pages={1045\ndash 1128},
         url={http://dx.doi.org/10.1002/cpa.21516},
}

\bib{MaekawaMazzucato16}{article}{
      author={Maekawa, Y.},
      author={Mazzucato, A.},
       title={The inviscid limit and boundary layers for {N}avier-{S}tokes
  flows},
        date={2016},
     journal={Handbook of Mathematical Analysis in Mechanics of Viscous
  Fluids},
       pages={1\ndash 48},
}

\bib{MasmoudiWong15}{article}{
      author={Masmoudi, N.},
      author={Wong, T.K.},
       title={Local-in-time existence and uniqueness of solutions to the
  {P}randtl equations by energy methods},
        date={2015},
        ISSN={0010-3640},
     journal={Comm. Pure Appl. Math.},
      volume={68},
      number={10},
       pages={1683\ndash 1741},
         url={https://doi.org/10.1002/cpa.21595},
}

\bib{Messiter70}{article}{
      author={Messiter, A.F.},
       title={Boundary layer flow near the trailing edge of a flat plate},
        date={1970},
     journal={SIAM J. Appl. Math.},
      volume={18},
       pages={241\ndash 257},
}

\bib{Meyer82}{techreport}{
      author={Meyer, R.E.},
       title={Stewartson's {T}riple {D}eck.},
 institution={U. Wisconsin-Madison MRC Technical Summary Report 2391},
        date={1982},
}

\bib{Meyer83}{article}{
      author={Meyer, R.E.},
       title={A view of the triple deck},
        date={1983},
     journal={SIAM J. Appl. Math.},
      volume={43},
      number={4},
       pages={639\ndash 663},
}

\bib{Neiland69}{article}{
      author={Neiland, V.~Ya.},
       title={Propagation of perturbation upstream with interaction between a
  hypersonic flow and a boundary layer},
        date={1969},
     journal={Mekh. Zhid. Gaz.},
      volume={4},
       pages={53\ndash 57},
}

\bib{NguyenNguyen18}{article}{
   author={Nguyen, T.T.},
   author={Nguyen, T.T.},
   title={The inviscid limit of Navier-Stokes equations for analytic data on
   the half-space},
   journal={Arch. Ration. Mech. Anal.},
   volume={230},
   date={2018},
   number={3},
   pages={1103--1129},
}

\bib{Oleinik66}{article}{
      author={Oleinik, O.A.},
       title={On the mathematical theory of boundary layer for an unsteady flow
  of incompressible fluid},
        date={1966},
        ISSN={0021-8928},
     journal={J. Appl. Math. Mech.},
      volume={30},
       pages={951\ndash 974},
}

\bib{OleinikSamokhin99}{book}{
      author={Oleinik, O.A.},
      author={Samokhin, V.N.},
       title={Mathematical models in boundary layer theory},
      series={Applied Mathematics and Mathematical Computation},
   publisher={Chapman \& Hall/CRC, Boca Raton, FL},
        date={1999},
      volume={15},
        ISBN={1-58488-015-5},
}

\bib{OliverTiti00}{article}{
   author={Oliver, M.},
   author={Titi, E.S.},
   title={Remark on the rate of decay of higher order derivatives for
   solutions to the Navier-Stokes equations in ${\bf R}^n$},
   journal={J. Funct. Anal.},
   volume={172},
   date={2000},
   number={1},
   pages={1--18},
}

\bib{Prandtl1904}{article}{
      author={Prandtl, L.},
       title={{\"{U}}ber {F}l{\"u}ssigkeitsbewegung bei sehr kleiner
  {R}eibung},
        date={1904},
     journal={Verh. III Intern. Math. Kongr. Heidelberg, Teuber, Leipzig},
       pages={485\ndash 491},
}

\bib{SammartinoCaflisch98a}{article}{
      author={Sammartino, M.},
      author={Caflisch, R.E.},
       title={Zero viscosity limit for analytic solutions, of the
  {N}avier-{S}tokes equation on a half-space. {I}. {E}xistence for {E}uler and
  {P}randtl equations},
        date={1998},
        ISSN={0010-3616},
     journal={Comm. Math. Phys.},
      volume={192},
      number={2},
       pages={433\ndash 461},
         url={http://dx.doi.org/10.1007/s002200050304},
}

\bib{SammartinoCaflisch98b}{article}{
      author={Sammartino, M.},
      author={Caflisch, R.E.},
       title={Zero viscosity limit for analytic solutions of the
  {N}avier-{S}tokes equation on a half-space. {II}. {C}onstruction of the
  {N}avier-{S}tokes solution},
        date={1998},
        ISSN={0010-3616},
     journal={Comm. Math. Phys.},
      volume={192},
      number={2},
       pages={463\ndash 491},
         url={http://dx.doi.org/10.1007/s002200050305},
}

\bib{Schlichting60}{book}{
      author={Schlichting, H.},
       title={Boundary layer theory},
      series={Translated by J. Kestin. 4th ed. McGraw-Hill Series in Mechanical
  Engineering},
   publisher={McGraw-Hill Book Co.},
     address={Inc., New York},
        date={1960},
}

\bib{Smith79}{article}{
      author={Smith, F.T.},
       title={On the non-parallel flow stability of the {B}lasius boundary
  layer},
        date={1979},
     journal={Proceedings of the Royal Society of London. A. Mathematical and
  Physical Sciences},
      volume={366},
      number={1724},
       pages={91\ndash 109},
}

\bib{Smith82}{article}{
      author={Smith, F.T.},
       title={On the high {R}eynolds number theory of laminar flows},
        date={1982},
     journal={IMA Journal of Applied Mathematics},
      volume={28},
      number={3},
       pages={207\ndash 281},
}

\bib{SmithBrown12}{book}{
      author={Smith, F.T.},
      author={Brown, S.N.},
       title={Boundary-layer separation: Proceedings of the iutam symposium
  london, august 26--28, 1986},
   publisher={Springer Science \& Business Media},
        date={2012},
}

\bib{Stewartson68}{article}{
      author={Stewartson, K.},
       title={On the flow near the trailing edge of a flat plate},
        date={1968},
     journal={Proceedings of the Royal Society of London. Series A.
  Mathematical and Physical Sciences},
      volume={306},
      number={1486},
       pages={275\ndash 290},
}

\bib{Stewartson69}{article}{
      author={Stewartson, K},
       title={On the flow near the trailing edge of a flat plate ii},
        date={1969},
     journal={Mathematika},
      volume={16},
      number={1},
       pages={106\ndash 121},
}

\bib{SychevEtAl98}{book}{
      author={Sychev, V.V.},
      author={Ruban, A.I.},
      author={Sychev, V.V.},
      author={Korolev, G.L.},
       title={Asymptotic theory of separated flows},
   publisher={Cambridge University Press, Cambridge},
        date={1998},
        ISBN={0-521-45530-8},
         url={https://mathscinet.ams.org/mathscinet-getitem?mr=1659235},
        note={Translated from the 1987 Russian original by Elena V. Maroko and
  revised by the authors.},
}

\bib{CowleyTutty85}{article}{
      author={Tutty, O.R.},
      author={Cowley, S.J.},
       title={On the stability and the numerical solution of the unsteady
  interactive boundary-layer equation},
        date={1986},
     journal={J. Fluid Mech.},
      volume={168},
       pages={431\ndash 456},
}

\bib{VanDommelenShen80}{article}{
      author={van Dommelen, L.L.},
      author={Shen, S.F.},
       title={The spontaneous generation of the singularity in a separating
  laminar boundary layer},
        date={1980},
        ISSN={0021-9991},
     journal={J. Comput. Phys.},
      volume={38},
      number={2},
       pages={125\ndash 140},
}

\bib{WangWangZhang17}{article}{
      author={Wang, C.},
      author={Wang, Y.},
      author={Zhang, Z.},
       title={Zero-viscosity limit of the {N}avier--{S}tokes equations in the
  analytic setting},
        date={2017},
     journal={Arch. Ration. Mech. Anal.},
      volume={224},
      number={2},
       pages={555\ndash 595},
}

\bib{XinZhang04}{article}{
      author={Xin, Z.},
      author={Zhang, L.},
       title={On the global existence of solutions to the {P}randtl's system},
        date={2004},
        ISSN={0001-8708},
     journal={Adv. Math.},
      volume={181},
      number={1},
       pages={88\ndash 133},
}

\end{biblist}
\end{bibdiv}

\end{document}